\DeclareMathOperator{\spn}{span}
\DeclareMathOperator{\ran}{ran}
\theoremstyle{plain}
\newtheorem{thm}{Theorem}
\newtheorem{cor}[thm]{Corollary}
\newtheorem{lem}[thm]{Lemma}
\newtheorem{prop}[thm]{Proposition}
\theoremstyle{definition}
\newtheorem{defn}[thm]{Definition}
\begin{document}

\begin{center}
  \textbf{SPATIOTEMPORAL PATTERN EXTRACTION \\BY SPECTRAL ANALYSIS OF VECTOR-VALUED OBSERVABLES} \\
  \medskip
  Dimitrios Giannakis\footnote{Corresponding author. Email address: \url{dimitris@cims.nyu.edu}.}\\
  \emph{Courant Institute of Mathematical Sciences, New York University}\\
  \smallskip
  Abbas Ourmazd, Joanna Slawinska \\
  \emph{Department of Physics, University of Wisconsin-Milwaukee}\\
  \smallskip
  Zhizhen Zhao\\
  \emph{Department of Electrical and Computer Engineering, University of Illinois at Urbana-Champaign}
\end{center}

\bigskip

\begin{center}
    \textbf{Abstract}
\end{center}

We present a data-driven framework for extracting complex spatiotemporal patterns generated by ergodic dynamical systems. Our approach, called Vector-valued Spectral Analysis (VSA), is based on an eigendecomposition of a kernel integral operator acting on a Hilbert space of vector-valued observables of the system, taking values in a space of functions (scalar fields) on a spatial domain. This operator is constructed by combining aspects of the theory of operator-valued kernels for machine learning with delay-coordinate maps of dynamical systems.  In contrast to conventional eigendecomposition techniques, which decompose the input data into pairs of temporal and spatial modes with a separable, tensor product structure, the patterns recovered by VSA can be manifestly non-separable, requiring only a modest number of modes to represent signals with intermittency in both space and time. Moreover, the kernel construction naturally quotients out dynamical symmetries in the data, and exhibits an asymptotic commutativity property with the Koopman evolution operator of the system, enabling decomposition of multiscale signals into dynamically intrinsic patterns. Application of VSA to the Kuramoto-Sivashinsky model demonstrates significant performance gains in efficient and meaningful decomposition over eigendecomposition techniques utilizing scalar-valued kernels.   

\bigskip

\section{Introduction}

Spatiotemporal pattern formation is ubiquitous in physical, biological, and engineered systems, ranging from molecular-scale reaction-diffusion systems, to engineering- and geophysical-scale convective flows, and astrophysical flows, among many examples  \cite{CrossHohenberg93,AhlersEtAl09,FungEtAl16}. The mathematical models for such systems are generally formulated by means of partial differential equations (PDEs), or coupled ordinary differential equations, with dissipation playing an important role in the development of low-dimensional effective dynamics on attracting subsets of the state space \cite{ConstantinEtAl89}. In light of this property, many pattern forming systems are amenable to analysis by empirical, data-driven techniques, complementing the scientific understanding gained from first-principles approaches. 

Historically, many of the classical Proper Orthogonal Decomposition (POD) and Principal Component Analysis (PCA) techniques for spatiotemporal pattern extraction have been based on the spectral properties of temporal and spatial covariance operators estimated from snapshot data \citep[][]{AubryEtAl91,HolmesEtAl96}. In Singular Spectrum Analysis (SSA) and related algorithms \cite{BroomheadKing86,VautardGhil89,GhilEtAl02}, combining this approach  with delay-coordinate maps of dynamical systems \cite{PackardEtAl80,Takens81,SauerEtAl91,Robinson05,DeyleSugihara11} generally improves the representation of the information content of the data in terms of a few meaningful modes. More recently, advances in machine learning and applied harmonic analysis \cite{ScholkopfEtAl98,BelkinNiyogi03,CoifmanEtAl05,CoifmanLafon06,Singer06,VonLuxburgEtAl08,BerrySauer16b} have led to techniques for recovering temporal and spatial patterns through the eigenfunctions of kernel integral operators  (e.g., heat operators) defined intrinsically in terms of a Riemannian geometric structure of the data. In particular, in a family of techniques called Nonlinear Laplacian Spectral Analysis (NLSA) \cite{GiannakisMajda12a}, and independently in \cite{BerryEtAl13}, the diffusion maps algorithm \cite{CoifmanLafon06} was combined with delay-coordinate maps to extract spatiotemporal patterns through the eigenfunctions of a kernel integral operator adept at capturing distinct and physically meaningful timescales in individual eigenmodes from multiscale high-dimensional signals.         

At the same time, spatial and temporal patterns have been extracted from eigenfunctions of Koopman \cite{MezicBanaszuk04,Mezic05,RowleyEtAl09,GiannakisEtAl15,WilliamsEtAl15,BruntonEtAl17,DasGiannakis17,Giannakis17} and Perron-Frobenius \cite{DellnitzJunge99} operators governing the evolution of observables and probability measures, respectively, in dynamical systems \cite{BudisicEtAl12,EisnerEtAl15}. Koopman eigenfunction analysis is also related to the Dynamic Mode Decomposition (DMD) algorithm \cite{Schmid10} and Linear Inverse Model techniques \cite{Penland89}. An advantage of these approaches is that they target operators defined intrinsically for the dynamical system generating the data, and thus able, in principle, to recover temporal and spatial patterns of higher physical interpretability and utility in predictive modeling than POD and kernel integral operator based approaches. In practice, however, the Koopman and Perron-Frobenius operators tend to have significantly more complicated spectral properties (e.g., non-isolated eigenvalues and/or continuous spectra), hindering the stability and convergence of data-driven approximation techniques. In \cite{GiannakisEtAl15,Giannakis17,DasGiannakis17}, these issues were addressed through an approximation scheme for the generator of the Koopman group with rigorous convergence guarantees, utilizing a data-driven orthonormal basis of the Hilbert space of the dynamical system acquired through diffusion maps. There, it was also shown that the eigenfunctions recovered by kernel integral operators defined on delay-coordinate mapped data (e.g., the covariance and heat operators in SSA and NLSA, respectively) in fact converge to Koopman eigenfunctions in the limit of infinitely many delays, indicating a  deep connection between these two branches of data analysis algorithms.           

All of the techniques described above recover from the data a set of temporal patterns and a corresponding set of spatial patterns, sometimes referred to as ``chronos'' and ``topos'' modes, respectively \cite{AubryEtAl91}. In particular, for a dynamical system with a state space $ X $ developing patterns in a physical domain $ Y $, each chronos mode, $ \varphi_j $, corresponds to a scalar- (real- or complex-) valued function on $ X $, and the corresponding topos mode, $ \psi_j $, corresponds to a scalar-valued function on $ Y $. Spatiotemporal reconstructions of the data with these approaches thus correspond to linear combinations of tensor product patterns of the form $ \varphi_j \otimes \psi_j $, mapping pairs of points $ ( x, y ) $ in the product space $ \Omega = X \times Y $ to the number $ \varphi_j( x ) \psi_j( y ) $. For a dynamical system possessing a compact invariant set $ A \subseteq X $ (e.g., an attractor) with an ergodic invariant measure, the chronos modes effectively become scalar-valued functions on $ A $, which may be of significantly smaller dimension than $ X $, increasing the robustness of approximation of these modes from finite datasets. 

Evidently, for spatiotemporal signals $ F( x, y ) $ of high complexity, tensor product patterns, with separable dependence on $ x $ and $ y $, can be highly inefficient in capturing the properties of the input signal. That is, the number $l$ of such patterns needed to recover $ F $ at high accuracy via a linear superposition 
\begin{equation}
  \label{eqPODDecomp}
  F \approx  F_l = \sum_{j=0}^l \varphi_j \otimes \psi_j 
\end{equation}
is generally large, with none of the individual patterns $ \varphi_j \otimes \psi_j $  being representative of $ F $. In essence, the problem is similar to that of approximating a non-separable space-time signal in a tensor product basis of temporal and spatial basis functions. Another issue with tensor product decompositions based on scalar-valued eigenfunctions is that in the presence of nontrivial spatial symmetries, the recovered patterns are oftentimes pure symmetry modes (e.g., Fourier modes in a periodic domain with translation invariance), with minimal dynamical significance and physical interpretability \cite{AubryEtAl93,HolmesEtAl96}. 
 
Here, we present a framework for spatiotemporal pattern extraction, called Vector-valued Spectral Analysis (VSA), designed to alleviate the shortcomings mentioned above. The fundamental underpinning of VSA is that time-evolving spatial patterns have a natural structure as vector-valued observables on the system's state space, and thus data analytical techniques operating on such spaces are likely to offer maximal descriptive efficiency and physical insight.  We show that eigenfunctions of kernel integral operators on vector-valued observables, constructed by combining aspects of the theory of operator-valued kernels \cite{MicchelliPontil05,CaponnettoEtAl08,CarmeliEtAl10} with delay-coordinate maps of dynamical systems \cite{PackardEtAl80,Takens81,SauerEtAl91,Robinson05,DeyleSugihara11}: a) Are superior to conventional algorithms in capturing signals with intermittency in both space and time; b) Naturally incorporate any underlying dynamical symmetries, eliminating redundant modes and thus improving physical interpretability of the results; c) Have a correspondence with Koopman operators, allowing detection of intrinsic dynamical timescales; and, d) Can be stably approximated via data-driven techniques that provably converge in the asymptotic limit of large data. 

The plan of this paper is as follows. Section~\ref{secBackground} introduces the class of dynamical systems under study, and provides an overview of data analysis techniques based on scalar kernels. In Section~\ref{secVSA}, we present the VSA framework for spatiotemporal pattern extraction using operator-valued kernels, and in Section~\ref{secVSAProperties} discuss the behavior of the method in the presence of dynamical symmetries, as well as its correspondence with Koopman operators. Section~\ref{secDataDriven} describes the data-driven implementation of VSA. In Section~\ref{secApplications}, we present applications to the  Kuramoto-Sivashinsky (KS) PDE model \cite{KuramotoTsuzuki76,Sivashinsky77} in chaotic regimes.  Our primary conclusions are described in Section~\ref{secConclusions}.  Technical results and descriptions of basic properties of kernels and Koopman operators are collected in four appendices. An application of VSA to a toy spatiotemporal signal featuring spatially localized propagating disturbances can be found in \cite{GiannakisEtAl17}.

\section{\label{secBackground}Background}

\subsection{\label{secPrelim}Dynamical system and spaces of observables}

We begin by introducing the dynamical system and the spaces of observables under study. The dynamics evolves by a $C^1 $ flow map  $\Phi^t : X \mapsto X$,  $t \in \mathbb{R}$, on a manifold $X$, possessing a Borel  ergodic invariant probability measure $\mu $ with compact support $A \subseteq X$ (e.g., an attractor). The system develops patterns on a spatial domain $Y$, which has the structure of a compact metric space equipped with a finite Borel measure (volume) $ \nu $. As a natural space of vector-valued observables, we consider the Hilbert space $H = L^2(X,\mu; H_Y)$ of square-integrable functions with respect to the invariant measure $ \mu $, taking values in  $H_Y = L^2(Y,\nu)$. That is, the elements of $H$ are equivalence classes of functions $ \vec f : X \mapsto H_Y $, such that for $ \mu $-almost every dynamical state $x\in X$, $\vec f(x) $ is a scalar (complex-valued) field on $Y$, square-integrable with respect to $\nu$. For every such observable $ \vec f $, the map $t \mapsto \vec f(\Phi^t(x)) $ describes a spatiotemporal pattern generated by the dynamics. Given $\vec f, \vec f' \in H $ and $ g, g' \in H_Y $, the corresponding inner products on $H$ and $H_Y$ are given by $\langle \vec f, \vec f' \rangle_{H} = \int_X \langle \vec f( x ), \vec f'( x ) \rangle_{H_Y} \, d\mu(x)$ and $ \langle g, g' \rangle_{H_Y} = \int_Y g^*(y) g'(y) \, d\nu(y)$, respectively.

An important property of $H$ is that it exhibits the isomorphisms 
\begin{displaymath}
  H \simeq H_X \otimes H_Y \simeq H_\Omega, 
\end{displaymath}
where $H_X = L^2(X,\mu)$ and $H_\Omega = L^2( \Omega, \rho ) $ are Hilbert spaces of  scalar-valued functions on $X$ and the product space $\Omega = X \times Y$, square-integrable with respect to the invariant measure $ \mu $ and the product measure $ \rho = \mu \times \nu $, respectively (the inner products of $H_X$ and $H_\Omega$ have analogous definitions to the inner product of $H_Y$). That is, every $\vec f\in H$ can be equivalently viewed as an element of the tensor product space $H_X\otimes H_Y$, meaning that it can be decomposed as $\vec f = \sum_{j=0}^\infty \varphi_j \otimes \psi_j$ for some $\varphi_j \in H_X$ and $\psi_j \in H_Y$, or it can be represented by a scalar-valued function $f \in H_\Omega$ such that $\vec f(x)(y) = f(x,y)$. Of course, not every observable $ \vec f \in  H $ is of pure tensor-product form, $ \vec f = \varphi \otimes \psi$,  for some $ \varphi \in H_X $ and $ \psi \in H_Y $. 

We consider that measurements $ \vec F(x_n)$ of the system are taken along a dynamical trajectory $ x_n = \Phi^{n\tau}(x_0) $, $ n \in \mathbb{N}$, starting from a point $x_0 \in X$ at a fixed sampling interval $ \tau > 0 $ through a continuous vector-valued observation map $ \vec F \in H$. We assume that $ \tau $ is such that $ \mu $ is an ergodic invariant probability measure of the discrete-time map $ \Phi^\tau $. 

\subsection{\label{secSKernel}Separable data decompositions via scalar kernel eigenfunctions}

Before describing the operator-valued kernel formalism at the core of VSA, we outline the standard approach to separable decompositions of spatiotemporal data as in~\eqref{eqPODDecomp} via eigenfunctions of kernel integral operators associated with scalar-valued kernels. In this context, a kernel is a continuous bivariate function $ k : X \times X \mapsto \mathbb{R } $, which assigns a measure of correlation or similarity to pairs of dynamical states in $ X$. Sometimes, but not always, we will require that $ k $ be symmetric, i.e., $ k( x, x' ) = k( x', x ) $ for all $ x, x' \in X $. Two examples of popular kernels used in applications (both symmetric) are the covariance kernels employed in POD,
\begin{equation}
  \label{eqKCov}
  k( x, x' ) = \langle \vec F( x ) - \bar F,\vec F( x' ) - \bar F \rangle_{H_Y}, \quad \bar F = \int_X \vec F(x) \, d\mu(x),
\end{equation}
and radial Gaussian kernels,
\begin{equation}
  \label{eqKGauss}
  k(x, x' ) = \exp \left( - \frac{ \lVert \vec F( x ) - \vec F( x' ) \rVert_{H_Y}^2 }{ \epsilon } \right), \quad \epsilon > 0,
\end{equation}
which are frequently used in manifold learning applications. Note that in both of the above examples the dependence of $ k( x, x' ) $ on $ x $ and $ x' $ is through the values of $ \vec F $ at these  points alone; this allows $ k(x,x') $ to be computable from observed data, without explicit knowledge of the underlying dynamical states $ x $ and $x'$. Hereafter, we will always work with such ``data-driven'' kernels.  

Associated with every scalar-valued kernel is an integral operator $ K : H_X \mapsto H_X $, acting on $ f \in H_X $ according to the formula
\begin{equation}
  \label{eqKScalar}
  K f ( x ) = \int_X k( x, x' ) f(x')\, d\mu(x').
\end{equation}
If $ k $ is symmetric, then by compactness of $ A $ and continuity of $ k $, $ K $ is a compact, self-adjoint operator with an associated orthonormal basis $ \{ \varphi_0, \varphi_1, \ldots \} $ of $ H_X $ consisting of its eigenfunctions. Moreover, the eigenfunctions $ \varphi_j $ corresponding to nonzero eigenvalues are continuous. These eigenfunctions are employed as the chronos modes in~\eqref{eqPODDecomp}, each inducing a continuous temporal pattern, $ t \mapsto \varphi_j( \Phi^t( x ) ) $, for every state $ x \in X $.  The spatial pattern $ \psi_j \in H_Y $ corresponding to $ \varphi_j $ is obtained by pointwise projection of the observation map onto $ \varphi_j $, namely 
\begin{equation}
    \label{eqProjPsi}
    \psi_j( y ) = \langle \varphi_j, F_y \rangle_{H_X},
\end{equation}
where $ F_y \in H_X $ is the continuous scalar-valued function on $X $ satisfying $ F_y( x ) = \vec F( x )( y ) $ for all $ x \in X $. 
\subsection{\label{secSKernelDelay}Delay-coordinate maps and Koopman operators}

A potential shortcoming of spatiotemporal pattern extraction via the kernels in~\eqref{eqKCov} and~\eqref{eqKGauss} is that the corresponding integral operators depend on the dynamics only indirectly, e.g., through the geometrical structure of the set $\vec F( A ) \subset H_Y $ near which the data is concentrated. Indeed, a well known deficiency of POD, particularly in systems with symmetries, is failure to identify low-variance, yet dynamically important patterns \cite{AubryEtAl93}. As a way of addressing this issue, it has been found effective \cite{BroomheadKing86,VautardGhil89,GhilEtAl02,GiannakisMajda12a,BerryEtAl13} to first embed the observed data in a higher-dimensional data space through the use of delay-coordinate maps, and then extract spatial and temporal patterns through a kernel operating in delay-coordinate space. For instance, analogs of the covariance and Gaussian kernels in~\eqref{eqKCov} and~\eqref{eqKGauss} in delay-coordinate space are given by
\begin{equation}
    \label{eqKQCov}
    k_Q( x, x' ) = \frac{1}{Q} \sum_{q=0}^{Q-1} \langle \vec F( \Phi^{-q \tau}( x ) ) - \bar F, \vec F( \Phi^{-q \tau}( x') ) - \bar F \rangle_{H_Y}, 
\end{equation}
and
\begin{equation}
    \label{eqKQGauss}
    k_Q( x, x' ) = \exp \left( - \frac{1}{\epsilon Q} \sum_{q=0}^{Q-1} \lVert \vec F( \Phi^{-q \tau}( x ) ) - \vec F( \Phi^{-q \tau}( x' ) ) \rVert_{H_Y}^2 \right), 
\end{equation}
respectively, here $ Q \in \mathbb{N} $ is the number of delays. The covariance kernel in~\eqref{eqKQCov} is essentially equivalent to the kernel employed in multi-channel SSA \cite{GhilEtAl02} in an infinite-channel limit, and the Gaussian kernel in~\eqref{eqKQGauss} is closely related to the kernel utilized in NLSA (though the NLSA kernel employs a state-dependent distance scaling akin to~\eqref{eqKQ} ahead, as well as Markov normalization, and these features lead to certain technical advantages compared to unnormalized radial Gaussian kernels). 

As is well known \cite{PackardEtAl80,Takens81,SauerEtAl91,Robinson05,DeyleSugihara11}, delay-coordinate maps can help recover the topological structure of state space from partial measurements of the system (i.e., non-injective observation maps), but in the context of kernel algorithms they also endow the kernels, and thus the corresponding eigenfunctions, with an explicit dependence on the dynamics. In \cite{Giannakis17,DasGiannakis17}, it was established that as the number of delays $ Q $ grows, the integral operators $K_Q$ associated with a family of scalar kernels $k_Q$ operating in delay-coordinate space converge in operator norm, and thus in spectrum, to a compact kernel integral operator $K_\infty$ on $H_X$ commuting with the Koopman evolution operators \cite{BudisicEtAl12,EisnerEtAl15} of the dynamical system. The latter are the unitary operators $U^t : H_X \mapsto H_X$, $ t \in \mathbb{R} $, acting on observables by composition with the flow map, 
\begin{displaymath}
    U^t f = f \circ \Phi^t,
\end{displaymath}
thus governing the evolution of observables in $H_X$ under the dynamics. 

In the setting of measure-preserving ergodic systems, associated with $U^t$ is a distinguished orthonormal set $\{ z_j \}$ of observables $z_j \in H_X$ consisting of Koopman eigenfunctions (see Appendix~\ref{appKoopman}). These observables have the special property of exhibiting time-periodic evolution under the dynamics at a single frequency $ \alpha_j \in \mathbb{R}$ intrinsic to the dynamical system, 
\begin{displaymath}
U^t z_j = e^{i \alpha_j t} z_j,
\end{displaymath}
even if the underlying dynamical flow $ \Phi^t $ is aperiodic. Moreover, every Koopman eigenspaces is one-dimensional by ergodicity. Because commuting operators have common eigenspaces, and the eigenspaces of compact operators corresponding to nonzero eigenvalues are finite-dimensional, it follows that as $Q$ increases, the eigenfunctions of $K_Q $ at nonzero eigenvalues acquire increasingly coherent (periodic or quasiperiodic) time evolution associated with a finite number of Koopman eigenfrequencies $ \alpha_j$. This property significantly enhances the physical interpretability and predictability of these patterns, providing justification for the skill of methods such as SSA and NLSA in extracting dynamically significant patterns from complex systems. Conversely, because kernel integral operators are generally more amenable to approximation from data than Koopman operators (which can have a highly complex spectral behavior, including non-isolated eigenvalues and continuous spectrum), the operators $K_Q$ provide an effective route for identifying finite-dimensional approximation spaces to stably and efficiently solve the Koopman eigenvalue problem. 

\subsection{\label{secCovGauss}Differences between covariance and Gaussian kernels}

Before closing this section, it is worthwhile pointing out two differences between covariance and Gaussian kernels, indicating that the latter may be preferable to the former in applications.

First, Gaussian kernels are strictly positive, and bounded below on compact sets. That is, for every compact set $S \subseteq X$ (including $S=A$), there exists a constant $c_S > 0 $ such that $ k( x, x' ) \geq c_S $ for all $ x,x'\in S$. This property allows Gaussian kernels to be normalizable to ergodic Markov diffusion kernels \cite{CoifmanLafon06,BerrySauer16b}. In a dynamical systems context, an important  property of such kernels is that the corresponding integral operators always have an eigenspace at eigenvalue 1 containing constant functions, which turns out to be useful in establishing well-posedness of Galerkin approximation techniques for Koopman eigenfunctions \cite{DasGiannakis17}. Markov diffusion operators are also useful for constructing spaces of observables of higher regularity than $L^2$, such as Sobolev spaces.
 
Second, if there exists a finite-dimensional linear subspace of $H_Y$ containing the image of $A$  under $ \vec F $,  then the integral operator $K$ associated with the covariance kernel has necessarily finite rank (bounded above by the dimension of that subspace), even if $ \vec F $ is an injective map on $A$. This effectively limits the richness of observables that can be stably extracted from data-driven approximations to covariance eigenfunctions. In fact, it is a well known property of covariance kernels that every eigenfunction $ \varphi_j$ at nonzero corresponding eigenvalue depends linearly on the observation map; specifically, up to proportionality constants, $ \varphi_j( x ) = \langle \psi_j, \vec F( x  ) \rangle_{H_Y} $,  and the number of such patterns is clearly finite if $ \vec F(x ) $ spans a finite-dimensional linear space as $x$ is varied. On the other hand, apart from trivial cases, the kernel integral operators associated with Gaussian kernels have infinite rank, even if $ \vec F $ is non-injective, and moreover if $ \vec F $ is injective they have no zero eigenvalues. In the latter case, data-driven approximations to the eigenfunctions of $K$ provide an orthonormal basis for the full $H_X $ space.  Similar arguments also motivate the use of Gaussian kernels over polynomial kernels. In effect, by invoking the Taylor series expansion  of the exponential function, a Gaussian kernel can be thought of as an ``infinite-order'' polynomial kernel.

\section{\label{secVSA}Vector-valued spectral analysis (VSA) formalism}

The main goal of VSA is to construct a decomposition of the observation map $ \vec F$ via an expansion of the form 
\begin{equation}
    \label{eqVSADecomp}
    \vec F \approx \vec F_l = \sum_{j=0}^l c_j \vec \phi_j, 
\end{equation}
where the $ c_j $ and $ \vec \phi_j $ are real-valued coefficients and vector-valued observables in $H $, respectively. Along a dynamical trajectory starting at $x\in X$, every $\vec \phi_j$ gives rise to a spatiotemporal pattern $t \mapsto \vec\phi_j(\Phi^t(x))$, generalizing the time series $ t \mapsto \varphi_j(\Phi^t(x)) $ from Section~\ref{secSKernel}. A key consideration in the VSA construction is that the  recovered patterns should not necessarily be of the form $ \vec \phi_j =  \varphi_j \otimes \psi_j $ for some $ \varphi_j \in H_X $ and $ \psi_j \in H_Y $, as would be the case in the conventional decomposition in~\eqref{eqPODDecomp}. To that end, we will determine the $ \vec \phi_j$ through the vector-valued eigenfunctions of an integral operator acting on $H$ directly, as opposed to first identifying scalar-valued eigenfunctions in $H_X$, and then forming tensor products with the corresponding projection-based spatial patterns, as in Section~\ref{secSKernel}. As will be described in detail below, the integral operator nominally employed by VSA is constructed using the theory of operator-valued kernels \cite{MicchelliPontil05,CaponnettoEtAl08,CarmeliEtAl10}, combined with delay-coordinate maps and Markov normalization as in NLSA.   

\subsection{\label{secVSAKernel}Operator-valued kernel and vector-valued eigenfunctions}

Let $B(H_Y)$ be the Banach space of bounded linear maps on $H_Y$, equipped with the operator norm. For our purposes, an operator-valued kernel is a continuous map $ l : X \times X \mapsto B(H_Y) $, mapping pairs of dynamical states in $ X $ to a bounded operator on $H_Y$. Every such kernel has an associated integral operator $ L : H \mapsto H $, acting on vector-valued observables according to the formula (cf.~\eqref{eqKScalar})
\begin{displaymath}
    L \vec f(x) = \int_X l( x, x' ) \vec f(x') \, d\mu(x'),
\end{displaymath}
where the integral above is a Bochner integral (a vector-valued generalization of the Lebesgue integral). Note that operator-valued kernels and their corresponding integral operators can be viewed as generalizations of their scalar-valued counterparts from Section~\ref{secSKernel}, in the sense that if $Y$ only contains a single point, then $H_Y$ is isomorphic to the vector space of complex numbers (equipped with the standard operations of addition and scalar multiplication and the inner product $ \langle w, z \rangle_{\mathbb{C}} = w^* z$), and $B(H_Y)$ is isomorphic to the space of multiplication operators on $\mathbb{C}$ by complex numbers. In that case, the action $l(x,x') \vec f(x')$ of the linear map $ l(x,x') \in B(H_Y) $ on the function $ \vec f(x') \in H_Y$ becomes equivalent to multiplication of the complex number $ f(x) $, where $ f $ is a complex-valued observable in $H_X$, by the value $k(x,x') \in \mathbb{C} $ of a scalar-valued kernel $k $ on $X$. 

Consider now an operator-valued kernel $ l : X \times X \mapsto B(H_Y)$, such that for every pair $ ( x, x' ) $ of states in $X$, $ l( x, x' ) =L_{xx'} $ is a kernel integral operator on $H_Y $ associated with a continuous kernel $ l_{xx'} : Y \times Y \mapsto \mathbb{R} $ with the symmetry property
\begin{equation}
    l_{xx'}(y,y') = l_{x'x}(y',y), \quad \forall x,x' \in X, \quad \forall y,y' \in Y.
    \label{eqLSym}
\end{equation}
This operator acts on a scalar-valued function  $ g \in H_Y $ on the spatial domain via an integral formula analogous to~\eqref{eqKScalar}, viz.    
\begin{displaymath}
    L_{xx'} g( y ) = \int_Y l_{xx'}( y, y' ) g( y' ) \, d\nu(y' ). 
\end{displaymath} 
Moreover, it follows from~\eqref{eqLSym} that the corresponding operator $ L $ on vector-valued observables is self-adjoint and compact, and thus there exists an orthonormal basis $ \{ \vec \phi_j\} $ of $ H $ consisting of its eigenfunctions,
\begin{displaymath}
    L \vec \phi_j = \lambda_j \vec \phi_j, \quad \lambda_j \in \mathbb{R}. 
\end{displaymath}
Hereafter, we will always order the eigenvalues $ \lambda_j$ of integral operators in decreasing order starting at $ j = 0$. By continuity of $ l $ and $ l_{xx'} $, every eigenfunction $ \vec \phi_j $ at nonzero corresponding eigenvalue  is a continuous function on $ X $, taking values in the space of continuous functions on $Y$. Such eigenfunctions can be employed in the VSA decomposition in~\eqref{eqVSADecomp} with the expansion coefficients 
\begin{equation}
    \label{eqCJVSA}
    c_j = \langle \vec \phi_j, \vec F \rangle_H = \int_X \langle \vec\phi_j( x ), \vec F( x ) \rangle_{H_Y} \, d\mu(x). 
\end{equation}
Note that, as with scalar kernel techniques, the decomposition in~\eqref{eqVSADecomp} does not include eigenfunctions at zero corresponding eigenvalue, for, to our knowledge, no data-driven approximation schemes are available for such eigenfunctions. See Section~\ref{secDataDriven} and Appendix~\ref{appDataDriven} for further details.  

Because $ H $ is isomorphic as  Hilbert space to the space $H_\Omega$ of scalar-valued observables on the product space $ \Omega = X \times Y $ (see Section~\ref{secPrelim}), every operator-valued kernel satisfying~\eqref{eqLSym}  can be constructed from a symmetric scalar kernel $ k : \Omega \times \Omega \mapsto \mathbb{R}$ by defining $ l( x, x' ) =  L_{xx'} $ as the integral operator associated with the kernel 
\begin{equation}
    \label{eqVecScalOp}
    l_{xx'}(y,y') = k(\omega, \omega'), \quad  \omega = (x,y), \quad \omega' = (x',y').
\end{equation}
In particular, the vector-valued eigenfunctions of $L$ are in one-to-one correspondence with the scalar-valued eigenfunctions of the integral operator $ K : H_\Omega \mapsto H_\Omega $ associated with $k$, where 
\begin{equation}
    K f( \omega ) = \int_\Omega k(\omega,\omega') f(\omega') \, d\rho(\omega').
    \label{eqKOmega}
\end{equation}
That is, the eigenvalues and eigenvectors of $ K $ satisfy the equation $ K \phi_j =\lambda_j \phi_j $ for the same eigenvalues as those of $L$, and we also have
\begin{equation}
    \label{eqVecScal}
    \vec \phi_j(x)(y) = \phi_j( ( x, y ) ), \quad \forall x \in X, \quad \forall y \in Y. 
\end{equation}
It is important to note that unless $ k $ is separable as a product of kernels on $ X $ and $ Y $, i.e., $ k( ( x, y ), ( x', y' ) ) = k^{(X)}(x,x') k^{(Y)}(y,y') $ for some $k^{(X)} : X \times X \mapsto \mathbb{R} $ and $ k^{(Y)} : Y \times Y \mapsto \mathbb{R} $, the $ \vec \phi_j $ will not be of pure tensor product form, $ \vec \phi_j = \varphi_j \otimes \psi_j $ with $ \varphi_j \in H_X $ and $ \psi_j \in H_Y $. Thus, passing to an operator-valued kernel formalism allows one to perform decompositions of significantly higher generality than the conventional approach in~\eqref{eqPODDecomp}. 

\subsection{\label{secKernelDelays}Operator-valued kernels with delay-coordinate maps}

While the framework described in Section~\ref{secVSAKernel} can be implemented with a very broad range of kernels, VSA employs kernels leveraging the insights gained from SSA, NLSA, and related techniques on the use of kernels operating in delay-coordinate space. That is, analogously to the kernels employed by these methods that depend on the values $ \vec F( (x) ),\vec F( \Phi^{- \tau}(x) ), \ldots, \vec F( \Phi^{-(Q-1) \tau}(x)) $ of the observation map on dynamical trajectories, VSA is based on kernels on the product space $ \Omega$ that also depend on data observed on dynamical trajectories, but with the key difference that this dependence is through the \emph{local} values  $  F_y( x ), F_y( \Phi^{- \tau}(x) ), \ldots,  F_y( \Phi^{-(Q-1) \tau}(x)) $ of the observation map at each point $ y $ in the spatial domain $ Y $. Specifically, defining the family of pointwise delay embedding maps $ \tilde F_Q : \Omega \mapsto \mathbb{R}^Q $ with $ Q \in \mathbb{N} $ and 
\begin{equation}
    \label{eqFQ}
    \tilde F_Q((x,y)) = \left( F_y( x ), F_y( \Phi^{- \tau}(x) ), \ldots,  F_y( \Phi^{-(Q-1) \tau}(x)) \right),
\end{equation}
we require that the kernels $ k_Q : \Omega \times \Omega \mapsto \mathbb{R} $ utilized in VSA have the following properties: 
\begin{enumerate}
    \item For every $Q \in \mathbb{N} $, $ k_Q $ is the pullback under $ \tilde F_Q $ of a continuous kernel $ \tilde k_Q : \mathbb{R}^Q \times \mathbb{R}^Q \mapsto \mathbb{R} $, i.e.,  
    \begin{equation}
        \label{eqKPullback}
        k_Q( \omega, \omega' ) = \tilde k_Q( \tilde F_Q( \omega ), \tilde F_Q( \omega' ) ), \quad \forall \omega,\omega'\in \Omega.
    \end{equation}
\item The sequence of kernels $ k_1, k_2, \ldots $ converges in $ H_\Omega \otimes H_\Omega $ norm to a kernel $ k_\infty \in  H_\Omega \otimes H_\Omega$.
    \item The limit kernel $k_\infty$ is invariant under the dynamics, in the sense that for all $ t\in \mathbb{R}$ and $ ( \rho \times \rho ) $-a.e.\ $ ( \omega, \omega' ) \in \Omega \times \Omega $, where $ \omega = ( x, y ) $ and $ \omega' = ( x', y' ) $,
    \begin{equation}
        \label{eqKInv}
        k_\infty( ( \Phi^t(x), y ), ( \Phi^t(x'), y' ) ) = k_\infty( \omega, \omega' ).
    \end{equation}
\end{enumerate}
We denote the corresponding integral operator on vector-valued observables in $H$, determined through~\eqref{eqVecScalOp}, by $ L_Q$.  As we will see below, operators of this class can be highly advantageous for the analysis of signals with an intermittent spatiotemporal character, as well as signals generated in the presence of dynamical symmetries. In addition, the family $L_Q$ exhibits a commutativity with Koopman operators in the infinite-delay limit as in the case of SSA and NLSA. 

Let $ \omega = ( x, y ) $ and $ \omega' = (x',y') $ with $x,x' \in X $ and $ y, y' \in Y$ be arbitrary points in $ \Omega$. As concrete examples of kernels satisfying the conditions listed above,
\begin{equation}
    \label{eqKCovVSA}
    k_Q(\omega, \omega') = \frac{1}{Q} \sum_{q=0}^{Q-1} \left[ F_y( \Phi^{-q\tau}(x)) - \bar F_y \right] \left[ F_{y'}( \Phi^{-q\tau}(x')) - \bar F_{y'} \right], \quad \bar F_y = \int_X F_y(x) \, d\mu(x), 
\end{equation}
and 
\begin{equation}
    \label{eqKGaussVSA}
    %k_Q(\omega,\omega') = \exp \left( - \frac{1}{\epsilon Q} \sum_{q=0}^{Q-1} \left\lvert F( \Phi^{q \tau}(x),y) - F(\Phi^{q \tau}(x'), y') \right\rvert^2 \right), \quad \epsilon>0,
    k_Q(\omega,\omega') = \exp \left( - \frac{1}{\epsilon Q} \sum_{q=0}^{Q-1} \left\lvert F_y( \Phi^{-q \tau}(x)) - F_{y'}(\Phi^{-q \tau}(x')) \right\rvert^2 \right), \quad \epsilon>0,
\end{equation}
are analogs of the covariance and Gaussian kernels in \eqref{eqKCov} and~\eqref{eqKGauss}, respectively, defined on $ \Omega$. For the reasons stated in Section~\ref{secCovGauss}, in practice we generally prefer working with Gaussian kernels than covariance kernels. Moreover, following the approach employed in NLSA and in \cite{BerryHarlim16,BerryEtAl15,GiannakisEtAl15,Giannakis17}, we nominally consider a more general class of Gaussian kernels than~\eqref{eqKGaussVSA}, namely
\begin{equation}
    \label{eqKQ}
    k_Q(\omega,\omega') = \exp \left( - \frac{s_Q(\omega) s_Q(\omega')}{\epsilon Q} \sum_{q=0}^{Q-1} \left\lvert F_y( \Phi^{q \tau}(x)) - F_{y'}(\Phi^{q \tau}(x')) \right\rvert^2 \right), \quad \epsilon>0,
   \end{equation}
   where  $ s_Q : \Omega \mapsto \mathbb{R}$ is a continuous non-negative scaling function.  Intuitively, the role of $s_Q $ is to adjust the bandwidth (variance) of the Gaussian kernel in order to account for variations in the sampling density and time tendency of the data. The explicit construction of this function is described in Appendix~\ref{appBandwidth}. For the purposes of the present discussion, it suffices to note that $ s_Q( \omega ) $ can be evaluated given the values of $F_y $ on the lagged trajectory $ \Phi^{-q\tau}(x) $,  so that, as with the covariance and radial Gaussian kernels, the class of kernels in~\eqref{eqKQ} also satisfy~\eqref{eqKPullback}. The existence of limit $ k_\infty $ for this family of kernels, as well as the covariance kernels in~\eqref{eqKCovVSA}, satisfying the conditions listed above is established in Appendix~\ref{appKDelay}.
   
\subsection{\label{secMarkov}Markov normalization}

As a final kernel construction step, when working with a strictly positive, symmetric  kernel $ k_Q$, such as~\eqref{eqKGaussVSA} and \eqref{eqKQ}, we normalize it to a continuous Markov kernel $p_Q : \Omega \times \Omega \mapsto \mathbb{R}$, satisfying $ \int_\Omega p_Q( \omega, \cdot ) \, d\rho = 1$ for all $\omega \in \Omega$, using the normalization procedure introduced in the diffusion maps algorithm \cite{CoifmanLafon06} and in~\cite{BerrySauer16b}; see Appendix~\ref{appMarkov} for a description. Due to this normalization, the corresponding integral operator $ P_Q : H_\Omega \mapsto H_\Omega$ is an ergodic Markov operator having a simple eigenvalue $ \lambda_0 = 1 $ and a corresponding constant eigenfunction $ \phi_0$. Moreover, the range of $P_Q$ is included in the space of continuous functions of $ \Omega$. While this operator is not necessarily self-adjoint (since the kernel $p_Q$ resulting from diffusion maps normalization is generally non-symmetric), it can be shown that it is related to a self-adjoint, compact operator by a similarity transformation. As a result, all eigenvalues of $P_Q $ are real, and admit the ordering $ 1 = \lambda_0 > \lambda_1 \geq \lambda_2 \cdots $. Moreover, there exists a (non-orthogonal) basis $ \{ \phi_0, \phi_1, \ldots \} $ of $H_\Omega $ consisting of eigenfunctions  corresponding to these eigenvalues, as well as a dual basis $ \{ \phi'_0, \phi'_1, \ldots \} $ consisting of eigenfunctions of $ P^*_Q $ satisfying $ \langle \phi'_i, \phi_j \rangle_{H_\Omega} = \delta_{ij}$. As with their unnormalized counterparts $k_Q$, the sequence of Markov kernels $ p_Q $ has a well-defined limit $p_\infty \in H_\Omega \otimes H_\Omega$ as $ Q \to\infty$; see Appendix~\ref{appMarkov} for further details. 
   
The eigenfunctions $ \phi_j $ induce vector-valued observables $ \vec \phi_j \in H$ through~\eqref{eqVecScal}, which are in turn eigenfunctions of an integral operator $ \mathcal{P}_Q : H \mapsto H$ associated with the operator-valued kernel determined via~\eqref{eqVecScalOp} applied to the Markov kernel $p_Q$. Similarly, the dual eigenfunctions $ \phi'_j$ induce vector-valued observables $ \vec \phi'_j \in H $, which are eigenfunctions of $\mathcal{P}^*_Q$ satisfying $ \langle \vec \phi_i, \vec \phi_j \rangle_{H} = \delta_{ij} $. Equipped with these observables, we perform the VSA decomposition in~\eqref{eqVSADecomp} with the expansion coefficients $c_j = \langle \vec \phi_j', \vec F \rangle_H$. The latter expression can be viewed as a generalization of~\eqref{eqCJVSA}, applicable for non-orthonormal eigenbases.

\section{\label{secVSAProperties}Properties of the VSA decomposition}

In this section, we study the properties of the operators $K_Q$ employed in VSA and their eigenfunctions in two relevant scenarios in spatiotemporal data analysis, namely data generated by systems with (i) dynamical symmetries, and (ii) non-trivial Koopman eigenfunctions. These topics will be discussed in Sections~\ref{secSymmetries} and~\ref{secKoopman}, respectively. We begin in Section~\ref{secBundle} with some general observations on the topological structure of spatiotemporal data in delay-coordinate space, and the properties this structure imparts on the recovered eigenfunctions.     

\subsection{\label{secBundle}Bundle structure of spatiotemporal data}

In order to gain insight on the behavior of VSA, it is useful to consider the triplet $(\Omega,B_Q,\pi_Q)$, where $B_Q = \tilde F_Q( \Omega) $ is the image of the product space $ \Omega$ under the delay-coordinate observation map, and $ \pi_Q : \Omega \mapsto B_Q $ is the continuous surjective map defined as $\pi_Q( \omega ) = \tilde F_Q( \omega)$ for any $ \omega \in \Omega $. Such a triplet forms a topological bundle with $ \Omega $, $ B_Q $, and $ \pi_Q $ playing the role of the total space, base space, and projection map, respectively. In particular, $\pi_Q$ partitions $\Omega$ into equivalence classes 
\begin{equation}
  \label{eqEquivQ}
  [\omega]_Q = \pi_Q^{-1}(x) \subseteq \Omega, 
\end{equation}
called fibers, on which $\pi_Q(\omega) $ attains a fixed value (i.e.,  $ \tilde \omega $ lies in $ [ \omega ]_Q $ if $ \pi_Q( \tilde \omega ) = \pi_Q( \omega ) $). 

By virtue of~\eqref{eqKPullback}, the kernel $ k_Q $ is a continuous function, constant on the $[\cdot]_Q$ equivalence classes, i.e., for all $ \omega,\omega' \in \Omega$, $ \tilde \omega \in [\omega]_Q$, and $ \tilde \omega' \in [\omega']_Q$,
\begin{equation}
    \label{eqKQEquiv}
    k_Q(\omega,\omega') = k_Q( \tilde \omega, \tilde \omega').
\end{equation}
Therefore, since for any $ f \in H_\Omega$ and $ \tilde \omega \in [\omega]_Q$, 
\begin{displaymath}
    K_Q f( \omega ) = \int_\Omega k_Q( \omega, \omega' ) f(\omega') \, d\rho(\omega' ) = \int_\Omega k_Q( \tilde \omega, \omega' ) f(\omega') \, d\rho(\omega' ) = K_Q f( \tilde \omega ),,
\end{displaymath}
the range of the integral operator $ K_Q $ is a subspace of the continuous functions on $ \Omega$, containing constant functions on the $ [ \cdot ]_Q $ equivalence classes. Correspondingly, the eigenfunctions $ \phi_j $ corresponding to nonzero eigenvalues (which lie in $ \ran K_Q$) have the form $ \phi_j = \eta_j \circ \pi_Q $, where $ \eta_j $ are continuous functions in the Hilbert space $  L^2( B_Q, \pi_{Q*} \rho) $ of scalar-valued functions on $B_Q$, square-integrable with respect to the pushforward of the measure $ \rho $ under $ \pi_Q$. We can thus conclude that, viewed as a scalar-valued function on $\Omega$, the VSA-reconstructed signal $ \vec F_l $ from~\eqref{eqVSADecomp} lies in the closed subspace $ \overline{ \ran K_Q } = \overline{ \spn\{ \phi_j : \lambda_j > 0 } \} $ of $ H_\Omega $ spanned by constant functions on the $ [ \cdot ]_Q $ equivalence classes. Note that  $ \overline{ \ran K_Q } $ is not necessarily decomposable as a tensor product of $H_X$ and $H_Y$ subspaces.

Observe now that with the definition of the kernel in~\eqref{eqKQ}, the $[\cdot]_Q$ equivalence classes consist of pairs of dynamical states $x \in \Omega$ and spatial points $y \in Y$ for which the evolution of the observable $ F_y$ is identical over $Q$ delays. While one can certainly envision scenarios where these equivalence classes each contain only one point, in a number of cases of interest, including the presence of dynamical symmetries examined below, the $[\cdot]_Q$ equivalence classes will be nontrivial, and as a result $\mathcal{H}_Q$ will be a strict subspace of $H_\Omega$. In such cases, the patterns recovered by VSA naturally factor out data redundancies, which generally enhances both robustness and physical interpretability of the results. Besides spatiotemporal data, the bundle construction described above may be useful in other scenarios, e.g., analysis of data generated by dynamical systems with varying parameters \cite{YairEtAl17}.

\subsection{\label{secSymmetries}Dynamical symmetries}

An important class of spatiotemporal systems exhibiting nontrivial $[\cdot]_Q$ equivalence classes is PDE models equivariant under the action of symmetry groups on the spatial domain \cite{HolmesEtAl96}. As a concrete example, we consider a PDE for a scalar field in $H_Y$, possessing a $C^1$ inertial manifold; i.e., a finite-dimensional, forward-invariant submanifold of $H_Y$ containing the attractor of the system, and onto which every trajectory is exponentially attracted \cite{ConstantinEtAl89}. In this setting, the inertial manifold plays the role of the state space manifold $X$. Moreover, we assume that the full system state is observed, so that the observation map $ \vec F $  reduces to the inclusion  $ X \hookrightarrow  H_Y $. 

Consider now a topological group $G$ (the symmetry group) with a continuous left action  $\Gamma_Y^g: Y \mapsto Y $, $ g \in G $, on the spatial domain, preserving null sets with respect to $\nu$. Suppose also that the dynamics is equivariant under the corresponding induced action $\Gamma_X^g : X \mapsto X $, $ \Gamma_X^g( x ) = x \circ \Gamma_Y^{g^{-1}} $, on the state space manifold. This means that the dynamical flow map and the symmetry group action commute, 
\begin{equation}
  \label{eqSymmetry}
  \Gamma^g_X \circ \Phi^t = \Phi^t \circ \Gamma_X^g, \quad \forall t \in \mathbb{R}, \quad \forall g \in G,
\end{equation}
or, in other words, if $ t \mapsto \Phi^t( x ) $ is a solution starting at $ x \in X $, then $ t \mapsto \Phi^t( \Gamma^g_X( x ) ) $ is a solution starting at $ \Gamma^g_X( x ) $. Additional aspects of symmetry group actions and equivariance are outlined in Appendix~\ref{appSymmetries}. Our goal for this section is to examine the implications of~\eqref{eqSymmetry} to the properties of the operators $K_Q$ employed in VSA and their eigenfunctions.

\subsubsection{\label{secEigSym}Dynamical symmetries and VSA eigenfunctions}

We begin by considering the induced action $ \Gamma_\Omega^g : \Omega \mapsto \Omega $ of $ G $ on the product space $ \Omega $, defined as
\begin{displaymath}
  \Gamma_\Omega^g = \Gamma_X^g \otimes \Gamma_Y^g.  
\end{displaymath}
This group action partitions $ \Omega $ into orbits, defined for every $ \omega \in \Omega $ as the subsets $ \Gamma_\Omega( \omega ) \subseteq \Omega $ with
\begin{displaymath}
  \Gamma_\Omega( \omega ) = \{ \Gamma^g_\Omega( \omega ) \mid g \in G \}.
\end{displaymath}
As with the subsets $ [ \omega ]_Q \subset \Omega $ from~\eqref{eqEquivQ} associated with delay-coordinate maps, the $ G $-orbits on $ \Omega$ form equivalence classes, consisting of points linked together by symmetry group actions (as opposed to having common values under delay coordinate maps). In general, these two sets of equivalence classes are unrelated, but in the presence of dynamical symmetries, they are, in fact, compatible, as follows:

\begin{prop}
    \label{propEquiv}
    If the equivariance property in~\eqref{eqSymmetry} holds, then for every $ \omega \in \Omega$, the $ G $-orbit $ \Gamma_\Omega( \omega ) $ is a subset of the  $ [ \omega ]_Q $ equivalence class. As a result, the following diagram commutes:
    \begin{displaymath}
       \begin{tikzcd}
           \Omega \arrow{r}{\Gamma^g_\Omega} \arrow{d}{\pi_Q} & \Omega \arrow{ld}{\pi_Q} \\
           B_Q
       \end{tikzcd}
    \end{displaymath}
\end{prop}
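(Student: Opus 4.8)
The plan is to show directly that $\pi_Q$ carries every point of the $G$-orbit $\Gamma_\Omega(\omega)$ to the same image as $\omega$. Since $[\omega]_Q = \pi_Q^{-1}(\pi_Q(\omega))$, a point $\tilde\omega$ lies in $[\omega]_Q$ precisely when $\pi_Q(\tilde\omega) = \pi_Q(\omega)$, i.e.\ $\tilde F_Q(\tilde\omega) = \tilde F_Q(\omega)$. Thus the inclusion $\Gamma_\Omega(\omega) \subseteq [\omega]_Q$ is exactly the identity $\pi_Q \circ \Gamma_\Omega^g = \pi_Q$ for all $g \in G$, and the commutativity of the displayed diagram is merely a restatement of this. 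Fixing $g \in G$ and $\omega = (x,y)$, and writing $\Gamma_\Omega^g(\omega) = (x',y')$ with $x' = \Gamma_X^g(x)$ and $y' = \Gamma_Y^g(y)$, it therefore suffices to verify $\tilde F_Q((x',y')) = \tilde F_Q((x,y))$.

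First I would expand both delay vectors componentwise via~\eqref{eqFQ}. The $q$-th entry of $\tilde F_Q((x',y'))$ is $F_{y'}(\Phi^{-q\tau}(x'))$, so the goal reduces to showing $F_{\Gamma_Y^g(y)}(\Phi^{-q\tau}(\Gamma_X^g(x))) = F_y(\Phi^{-q\tau}(x))$ for each $q \in \{0,\dots,Q-1\}$. The next step is to move the symmetry past the dynamics: applying the equivariance hypothesis~\eqref{eqSymmetry} with $t = -q\tau$ gives $\Phi^{-q\tau} \circ \Gamma_X^g = \Gamma_X^g \circ \Phi^{-q\tau}$, so with $\xi = \Phi^{-q\tau}(x)$ the left-hand side becomes $F_{\Gamma_Y^g(y)}(\Gamma_X^g(\xi))$.

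The heart of the argument is then a single pointwise identity expressing equivariance of the observation map: for every state $\xi \in X$ and every $y \in Y$,
\begin{equation}
  \label{eqObsEquiv}
  F_{\Gamma_Y^g(y)}(\Gamma_X^g(\xi)) = F_y(\xi).
\end{equation}
In the inclusion setting of this section, $\vec F$ is the embedding $X \hookrightarrow H_Y$, so $F_y(\xi) = \xi(y)$, while the induced action is $\Gamma_X^g(\xi) = \xi \circ \Gamma_Y^{g^{-1}}$; hence $(\Gamma_X^g \xi)(\Gamma_Y^g(y)) = \xi(\Gamma_Y^{g^{-1}}(\Gamma_Y^g(y))) = \xi(y)$, which is precisely~\eqref{eqObsEquiv}. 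Applying~\eqref{eqObsEquiv} with $\xi = \Phi^{-q\tau}(x)$ matches the two $q$-th components, and since $q$ was arbitrary the full delay vectors agree, establishing the claim.

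I do not expect a genuine obstacle here: the computation is short, and the only place requiring care is the bookkeeping of inverses in~\eqref{eqObsEquiv}, where the pullback definition $\Gamma_X^g(\xi) = \xi \circ \Gamma_Y^{g^{-1}}$ is exactly what cancels the spatial group action. The one structural point worth emphasizing is that the proof uses the two equivariances in tandem---the dynamical equivariance~\eqref{eqSymmetry} to commute the flow with the group, and the observation-map equivariance~\eqref{eqObsEquiv} to cancel the group on each delay coordinate---so both are essential, and the conclusion would fail for an observation map not compatible with the $G$-action.
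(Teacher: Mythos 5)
Your proof is correct and follows essentially the same route as the paper's: both reduce the claim to the componentwise identity $\tilde F_Q(\Gamma^g_\Omega(\omega)) = \tilde F_Q(\omega)$, use the dynamical equivariance~\eqref{eqSymmetry} to commute the flow past the group action in each delay coordinate, and then invoke the pullback definition $\Gamma^g_X(\xi) = \xi \circ \Gamma_Y^{g^{-1}}$ of the induced state-space action to cancel the spatial group action. Your isolation of the observation-map identity as a separate displayed step is a minor organizational improvement over the paper's single chained computation, but the mathematical content is identical.
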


\begin{proof}
    Let $x(y)$ denote the value of the dynamical state $ x \in X \subset H_Y $ at $ y \in Y $.  It follows from~\eqref{eqSymmetry} that for every $ t \in \mathbb{R}$, $ g \in G$, $x \in X$, and $ y \in G $,
    \begin{displaymath}
        \Phi^t(\Gamma^g_X(x))( \Gamma^g_Y(y)) = \Gamma^g_X( \Phi^t(x))(\Gamma^g_Y(y)) = \Gamma_X^{g^{-1}}( \Gamma^g_X(\Phi^t(x))) = \Phi^t(x)(y).
    \end{displaymath}
    Therefore, since $ \vec F $ is an inclusion, for every $ g \in G$ and  $ \omega = (x, y ) \in \Omega $, we have
    \begin{align*}
        \tilde F_Q( \omega ) &= \left(  F_y( x ), F_y( \Phi^{-\tau}(x)), \ldots, F_y( \Phi^{-(Q-1) \tau} (x))  \right) \\
        &= \left(  x( y ), \Phi^{-\tau}(x)(y), \ldots, \Phi^{-(Q-1)\tau}(x)(y) \right) \\
        &= \left(  \Gamma^g_X(x)( \Gamma^g_Y(y) ), \Phi^{-\tau}(\Gamma^g_X(x))(\Gamma^g_Y(y)), \ldots, \Phi^{-(Q-1)\tau}(\Gamma^g_X(x))(\Gamma^g_Y(y)) \right) \\
        &= \tilde F_Q( \Gamma^g_\Omega(\omega)). \qedhere
    \end{align*}
\end{proof}

We therefore conclude from Proposition~\ref{propEquiv} and~\eqref{eqKQEquiv} that the kernel $k_Q $ is constant on $ G $-orbits,     
\begin{equation}
  \label{eqKSym}
  k_Q( \Gamma_\Omega^g( \omega ), \Gamma_\Omega^{g'}( \omega' ) ) = k_Q(\omega, \omega'), \quad \forall\omega,\omega' \in \Omega, \quad \forall g,g' \in G, 
\end{equation}
and therefore the eigenfunctions $ \phi_j$ corresponding to nonzero eigenvalues of $K_Q$ are continuous functions with the invariance property
\begin{displaymath}
    \phi_j \circ \Gamma^g_\Omega = \phi_j, \quad \forall g \in G.
\end{displaymath}
This is one of the key properties of VSA, which we interpret as factoring the symmetry group from the recovered spatiotemporal patterns.     

\subsubsection{\label{secGroupSpec}Spectral characterization}

In order to be able to say more about the implication of the results in Section~\ref{secEigSym} at the level of operators, we now assume that the group action  $ \Gamma^g_\Omega $ preserves the measure $ \rho$. Then, there exists a unitary representation of $ G $ on $ H_\Omega $, whose representatives are unitary operators $ R^g_\Omega : H_\Omega \mapsto H_\Omega $ act on functions $ f \in H_\Omega$ by composition with $ \Gamma^g_\Omega$, i.e., $ R^g_\Omega f = f \circ \Gamma_\Omega^g $. Another group of unitary operators acting on $ H_\Omega $ consists of the Koopman operators, $ \tilde U^t : H_\Omega \mapsto H_\Omega$, which we define here via a trivial lift of the Koopman operators $U^t $ on $H_X$, namely $\tilde U^t = U^t \otimes I_{H_Y}$, where $I_{H_Y} $ is the identity operator on $H_Y$; see Appendix~\ref{appKoopman} for further details. In fact, the map $ t \mapsto \tilde U^t $ constitutes a unitary representation of the Abelian group of real numbers (playing the role of time), equipped with addition as the group operation, much like $ g \mapsto R^g_\Omega$ is a unitary representation of the symmetry group $G$.    The following theorem summarizes the relationship between the symmetry group representatives  and the Koopman and kernel integral operators on $H_\Omega$.

\begin{thm}
    \label{thmSym}
    For every $ g \in G$ and $t \in \mathbb{R}$, the operator $ R^g_\Omega$ commutes with $ K_Q$ and $ \tilde U^t$. Moreover, every function in the range of $K_Q$ is invariant under $R^g_\Omega$, i.e., $ R^g_\Omega K_Q = K_Q$.  
\end{thm}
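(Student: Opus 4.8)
The plan is to reduce all three assertions to the two structural facts already in hand: the kernel invariance \eqref{eqKSym} and the dynamical equivariance \eqref{eqSymmetry}. I would treat the claims in the order $\tilde U^t$-commutativity, then the range-invariance identity $R^g_\Omega K_Q = K_Q$, then the $K_Q$-commutativity, since each rests on a different one of these inputs and the last follows almost for free from the preceding computations.

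For the commutativity with $\tilde U^t$, I would work pointwise on $H_\Omega$ using the explicit forms $(\tilde U^t f)(x,y) = f(\Phi^t(x), y)$, coming from $\tilde U^t = U^t \otimes I_{H_Y}$, and $(R^g_\Omega f)(x,y) = f(\Gamma^g_X(x), \Gamma^g_Y(y))$. Composing in the two orders gives $(R^g_\Omega \tilde U^t f)(x,y) = f(\Phi^t(\Gamma^g_X(x)), \Gamma^g_Y(y))$ and $(\tilde U^t R^g_\Omega f)(x,y) = f(\Gamma^g_X(\Phi^t(x)), \Gamma^g_Y(y))$; these agree for every $f$ precisely because of the identity $\Phi^t \circ \Gamma^g_X = \Gamma^g_X \circ \Phi^t$ of \eqref{eqSymmetry}. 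This step is purely pointwise and requires no measure-theoretic input.

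For the statements involving $K_Q$, the key is to extract from \eqref{eqKSym} the stronger fact that $k_Q$ is invariant under the group action in each argument \emph{separately}: setting $g' = e$ yields invariance in the first slot, and $g = e$ yields it in the second. Applying $R^g_\Omega$ to $K_Q f$ and using invariance in the first argument gives $(R^g_\Omega K_Q f)(\omega) = \int_\Omega k_Q(\Gamma^g_\Omega(\omega), \omega') f(\omega')\, d\rho(\omega') = \int_\Omega k_Q(\omega, \omega') f(\omega')\, d\rho(\omega') = (K_Q f)(\omega)$, which is exactly $R^g_\Omega K_Q = K_Q$. For the companion identity $K_Q R^g_\Omega = K_Q$ I would instead push $R^g_\Omega$ onto the integration variable, $(K_Q R^g_\Omega f)(\omega) = \int_\Omega k_Q(\omega, \omega') f(\Gamma^g_\Omega(\omega'))\, d\rho(\omega')$, and substitute $\omega' \mapsto \Gamma^{g^{-1}}_\Omega(\omega')$; here the hypothesis that $\Gamma^g_\Omega$ preserves $\rho$ makes the change of variables trivial, and invariance of $k_Q$ in the second argument then collapses the integrand back to $k_Q(\omega, \omega') f(\omega')$, giving $K_Q f$. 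Combining the two one-sided identities yields $R^g_\Omega K_Q = K_Q = K_Q R^g_\Omega$, which is the asserted commutativity.

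There is no serious analytical obstruction, since $k_Q$ is continuous on the compact space $\Omega \times \Omega$ and every integral is absolutely convergent. The only step that consumes a hypothesis beyond \eqref{eqKSym} is the change of variables in the $K_Q R^g_\Omega = K_Q$ computation, which is exactly where measure-preservation of $\Gamma^g_\Omega$ enters. The conceptually noteworthy point—worth flagging rather than a real difficulty—is that \eqref{eqKSym} delivers invariance in each argument independently, so one obtains the strictly stronger conclusion $R^g_\Omega K_Q = K_Q$, i.e.\ that $\ran K_Q$ consists of $G$-invariant functions, rather than mere commutativity; commutativity is then essentially a corollary of this stronger fact together with its companion identity.
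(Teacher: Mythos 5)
Your proof is correct and takes essentially the same route as the paper's: both arguments rest on the two-parameter kernel invariance \eqref{eqKSym} (whose strength, as you note, is precisely that it permits independent group elements in each slot) combined with the $\rho$-preservation of $\Gamma^g_\Omega$, while the $\tilde U^t$-commutativity is read off directly from \eqref{eqSymmetry}. The only difference is organizational: the paper carries both group elements through a single chain yielding $K_Q = R^{g'}_\Omega K_Q R^g_\Omega$ and then specializes $g' = g^{-1}$ or $g = e$, whereas you specialize \eqref{eqKSym} first and establish the one-sided identities $R^g_\Omega K_Q = K_Q$ and $K_Q R^g_\Omega = K_Q$ separately --- a reorganization of the same computation, with the minor virtue of making explicit that only the second identity consumes the measure-preservation hypothesis.
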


\begin{proof}
    The commutativity between $R^g_\Omega$ and $ \tilde U^t$ is a direct consequence of~\eqref{eqSymmetry}. To verify the claims involving $ K_Q$, we use~\eqref{eqKSym} and the fact that $ \Gamma^g_\Omega$ preserves $ \rho$ to compute
\begin{align*}
  K_Q f( \omega ) &= \int_\Omega k_Q( \omega, \omega' ) f( \omega' ) \, d\rho( \omega' ) \\
  &= \int_\Omega k_Q( \omega, \Gamma_\Omega^{g}( \omega' ) ) f( \Gamma_\Omega^{g}( \omega' ) ) \, d \rho( \omega' ) \\
  &= \int_\Omega k_Q( \Gamma_\Omega^{g^{-1}}( \omega ),   \omega' ) f( \Gamma_\Omega^g( \omega' ) ) \, d \rho( \omega' ) \\
& = \int_\Omega k_Q( \Gamma_\Omega^{g'}( \omega ),   \omega' ) f( \Gamma_\Omega^g( \omega' ) ) \, d \rho( \omega' ) \\
  &= R^{g'}_\Omega K_Q R^g_\Omega f( \omega),
\end{align*}
where $ g $ and $ g' $ are arbitrary, and the equalities hold for $ \rho$-a.e.\ $ \omega \in \Omega $. Setting $ g' = g^{-1} $ in the above, and acting on both sides by $ R^g_\Omega $, leads to $ R^g_\Omega K_Q = K_Q R^g_\Omega $; i.e., $   [ R^g_\Omega, K_Q ] = 0 $, as claimed.  On the other hand, setting $ g $ to the identity element of $ G $ leads to $ K_Q = R^{g'}_\Omega K_Q $, completing the proof of the theorem.\end{proof}

Because commuting operators have common eigenspaces, Theorem~\ref{thmSym} establishes the existence of two sets of common eigenspaces associated with the symmetry group, namely common eigenspaces between $ \Gamma^g_\Omega $ and $ K_Q$ and those between $ R^g_\Omega $ and $\tilde U^t$. In general, these two families of eigenspaces are not compatible since $ \tilde U^t $ and $ K_Q $ many not commute, so for now we will focus on the common eigenspaces between $ R^g_\Omega $ and $ K_Q $ which are accessible via VSA with finitely many delays. In particular, because $ R^g_\Omega K_Q = K_Q $, and every eigenspace $ W_l $ of $ K_Q $ at nonzero corresponding eigenvalue $ \lambda_l $ is finite-dimensional (by compactness of that operator), we can conclude that the $ W_l$ are finite-dimensional subspaces onto which the action of $R^g_\Omega$ reduces to the identity. In other words, the eigenspaces of $ K_Q$ at nonzero corresponding eigenvalues are finite-dimensional trivial representation spaces of $G$, and every VSA eigenfunction $ \phi_j $ is also an eigenfunction of $R^g_\Omega$ at eigenvalue 1. 

At this point, one might naturally ask to what extent these properties are shared in common between VSA and conventional eigendecomposition techniques based on scalar kernels on $ X $. In particular, in the measure-preserving setting for the product measure  $ \rho= \mu \times \nu$ examined above, it must necessarily be the case that the group actions $ \Gamma^g_X$ and $ \Gamma^g_Y$ separately preserve $ \mu $ and $ \nu $, respectively, thus inducing unitary operators $ \Gamma^g_X : H_X \mapsto H_X $ and $ \Gamma^g_Y : H_Y \mapsto H_Y$ defined analogously to $R^g_\Omega$. For a variety of kernels $ k^{(X)}_Q : X \times X \mapsto \mathbb{R}$ that only depend on observed data through inner products and norms on $H_Y$ (e.g., the covariance and Gaussian kernels in Section~\ref{secSKernel}), this implies that the invariance property 
\begin{equation}
    \label{eqKXSym}
    k^{(X)}_Q( \Gamma^g_X( x ), \Gamma^g_X(x') ) = k^{(X)}_Q(x,x')
\end{equation}
holds for all $ g \in G$ and $x,x' \in X$. Moreover, proceeding analogously to the proof of Theorem~\ref{thmSym}, one can show that $ \Gamma^g_X $ and the integral operator $ K^{(X)}_Q : H_X \mapsto H_X$ associated with $k_Q^{(X)}$ commute, and thus have common eigenspaces $W^{(X)}_l$, $ \lambda_l^{(X)} \neq 0 $, which are finite-dimensional invariant subspaces under $ R^g_X$. Projecting the observation map $ \vec F $ onto $ W^{(X)}_l$ as in~\eqref{eqProjPsi}, then yields a finite-dimensional subspace $W^{(Y)}_l \subset H_Y$, which is invariant under $R^g_Y $, and thus $ W^{(X)}_l \otimes W^{(Y)}_l \subset H_\Omega$ is invariant under $ R^g_\Omega$. The fundamental difference between the representation of $G$ on  $ W^{(X)}_l \otimes W^{(Y)}_l \subset H_\Omega$ and that on the $W_l$ subspaces recovered by VSA, is that the former is generally \emph{not} trivial, i.e., in general, $ R^g_\Omega $ does not reduce to the identity map on $ W^{(X)}_l \otimes W^{(Y)}_l$. A well-known consequence of this is that the corresponding spatiotemporal patterns $ \varphi_j \otimes \psi_j$ from~\eqref{eqPODDecomp} become pure symmetry modes (e.g., Fourier modes in dynamical systems with translation invariance), hampering their physical interpretability. 

This difference between VSA and conventional eigendecomposition techniques can be traced back to the fact that that on $X$ there is no analog of Proposition~\ref{propEquiv} relating equivalence classes of points with respect to delay-coordinate maps and group orbits on that space. Indeed, Proposition~\ref{propEquiv} plays an essential role in establishing the kernel invariance property in~\eqref{eqKSym}, which is stronger than~\eqref{eqKXSym} as it allows action by two independent group elements. Equation~\ref{eqKSym} is in turn necessary to determine that $K_Q R^g_\Omega = K_Q $ in Theorem~\ref{thmSym}. In summary, these considerations highlight the importance of taking into account the bundle structure of spatiotemporal data when dealing with systems with dynamical symmetries.   

\subsection{\label{secKoopman}Correspondence with Koopman operators}

\subsubsection{\label{secInfDelay}Behavior of kernel integral operators in the infinite-delay limit}

As discussed in Section~\ref{secSymmetries}, in general, the kernel integral operators $ K_Q $ do not commute with the Koopman operators $ \tilde U^t$, and thus these families of operators do not share common eigenspaces. Nevertheless, as we establish in this section, under the conditions on kernels stated in Section~\ref{secKernelDelays}, the sequence of operators $ K_Q $ has an asymptotic commutativity property with $ \tilde U^t $ as $Q \to \infty$, allowing the kernel integral operators from VSA to approximate eigenspaces of Koopman operators. 

In order to place our results in context, we begin by noting that an immediate consequence of the bundle construction described in Section~\ref{secBundle} is that if the support of the measure $ \rho $, denoted $ \tilde A \subseteq \Omega $,  is connected as a topological space, then in the limit of no delays, $Q=1$, the image of $ \tilde A $  under the delay coordinate map $ \tilde F_1 $ is a closed interval in $ \mathbb{R}$, and correspondingly the eigenfunctions $ \phi_j $ are pullbacks of orthogonal functions on that interval. In particular, because $ \tilde F_1 $ is equivalent to the vector-valued observation map, in the sense that  $ \vec F( x )( y ) = \tilde F_1( (x, y ) ) $,  the eigenfunctions $ \phi_j$ of the $Q=1$ operator corresponding to nonzero eigenvalues are continuous functions, constant on the level sets of the input signal. Therefore, in this limit, the recovered eigenfunctions will generally have comparable complexity to the input data, and thus be of limited utility for the purpose of decomposing complex signals into simpler patterns. Nevertheless, besides the strict $Q = 1$ limit, the $ \phi_j $ should remain approximately constant on the level sets of the input signal for moderately small values $ Q > 1$, and this property should be useful in a number of applications, such as signal denoising and level set estimation (note that data-driven approximations to $ \phi_j $ become increasingly robust to noise with increasing $Q$ \cite{Giannakis17}). Mathematically, in this small-$Q$ regime VSA has some common aspects with nonlocal averaging techniques in image processing \cite{BuadesEtAl05}.

We now focus on the behavior of VSA in the infinite-delay limit, where the following is found to hold.

\begin{thm}
    \label{thmKoop}
    Under the conditions on the kernels $ k_Q$ stated in Section~\ref{secKernelDelays}, the associated integral operators $K_Q $ converge as $Q \to \infty $ in operator norm, and thus in spectrum, to the integral operator $ K_\infty$ associated with the kernel $ k_\infty $. Moreover, $ K_\infty$ commutes with the Koopman operator  $ \tilde U^t$ for all $ t \in \mathbb{R}$. 
\end{thm}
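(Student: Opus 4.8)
The plan is to treat the two assertions separately, exploiting the fact that each $K_Q$ is a Hilbert–Schmidt operator whose kernel lives in $H_\Omega \otimes H_\Omega \simeq L^2(\Omega \times \Omega, \rho \times \rho)$. For the convergence statement, the key ingredient is the standard isometry between Hilbert–Schmidt operators on $H_\Omega$ and their $L^2$ kernels: any integral operator with kernel $k \in H_\Omega \otimes H_\Omega$ satisfies $\lVert K \rVert_{\mathrm{HS}} = \lVert k \rVert_{H_\Omega \otimes H_\Omega}$. Applying this to $K_Q - K_\infty$, whose kernel is $k_Q - k_\infty$, yields $\lVert K_Q - K_\infty \rVert_{\mathrm{HS}} = \lVert k_Q - k_\infty \rVert_{H_\Omega \otimes H_\Omega}$. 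The $H_\Omega \otimes H_\Omega$-convergence hypothesis on the kernels stated in Section~\ref{secKernelDelays} forces the right-hand side to $0$ as $Q \to \infty$, and since the operator norm is dominated by the Hilbert–Schmidt norm, $\lVert K_Q - K_\infty \rVert \to 0$ as well. Convergence in spectrum then follows from standard perturbation theory: symmetry of each $k_Q$, inherited by the limit $k_\infty$, makes these operators self-adjoint and compact, so their eigenvalues are Lipschitz in the operator norm and converge accordingly.

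For the commutativity claim, I would compute $K_\infty \tilde U^t f$ and $\tilde U^t K_\infty f$ directly and show they coincide in $H_\Omega$. Writing $\omega = (x,y)$ and $\omega' = (x',y')$, and using $(\tilde U^t f)(x,y) = f(\Phi^t(x), y)$ since $\tilde U^t = U^t \otimes I_{H_Y}$, one has
\begin{displaymath}
    K_\infty \tilde U^t f(\omega) = \int_\Omega k_\infty(\omega, (x',y')) \, f(\Phi^t(x'), y') \, d\mu(x') \, d\nu(y').
\end{displaymath}
The substitution $u = \Phi^t(x')$, together with the $\Phi^t$-invariance of $\mu$, leaves $d\mu$ unchanged and converts the kernel argument to $k_\infty((x,y), (\Phi^{-t}(u), y'))$. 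The dynamical-invariance property~\eqref{eqKInv} of $k_\infty$, applied with flow time $t$ to the second state, identifies this with $k_\infty((\Phi^t(x), y), (u, y'))$, so the integral becomes $(K_\infty f)(\Phi^t(x), y) = \tilde U^t K_\infty f(\omega)$. This gives $[K_\infty, \tilde U^t] = 0$ for every $t$.

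The main obstacle is bookkeeping the almost-everywhere nature of~\eqref{eqKInv}: the invariance of $k_\infty$ holds only for $(\rho \times \rho)$-a.e.\ pair $(\omega, \omega')$, so the pointwise rewriting of the kernel inside the integral must be interpreted as an identity in $H_\Omega$ rather than for every $\omega$. This is resolved by observing that the change of variables $x' \mapsto \Phi^t(x')$ preserves $\rho \times \rho$ (again by invariance of $\mu$), so the exceptional null set on which~\eqref{eqKInv} may fail is carried to a null set, and the manipulations are valid $\rho$-almost everywhere — precisely the sense of equality demanded of operators on $H_\Omega$. A secondary point worth verifying is that $k_\infty$ genuinely lies in $H_\Omega \otimes H_\Omega$, so that $K_\infty$ is well-defined and Hilbert–Schmidt; this is exactly the content of the convergence hypothesis of Section~\ref{secKernelDelays}, which posits $k_\infty$ as the $H_\Omega \otimes H_\Omega$ limit of the $k_Q$.
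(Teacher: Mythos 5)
Your proposal is correct and follows essentially the same route as the paper: the norm convergence is obtained from the Hilbert--Schmidt bound $\lVert K_Q - K_\infty \rVert \leq \lVert k_Q - k_\infty \rVert_{H_\Omega \otimes H_\Omega}$, and the commutativity from the change of variables $\omega' \mapsto \tilde\Phi^t(\omega')$ (using $\rho$-invariance) combined with the shift invariance~\eqref{eqKInv} of $k_\infty$. The only cosmetic difference is that you verify $K_\infty \tilde U^t = \tilde U^t K_\infty$ directly, whereas the paper derives the equivalent identity $K_\infty = \tilde U^{-t} K_\infty \tilde U^t$; your explicit handling of the $(\rho\times\rho)$-null set where~\eqref{eqKInv} may fail is a welcome refinement of a point the paper passes over with ``$\rho$-a.e.''
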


\begin{proof}
    Since  $ k_Q $ and $k_\infty$ all lie in $H_\Omega \otimes H_\Omega$,  $ K_Q $ and $K_\infty $ are Hilbert-Schmidt integral operators. As a result, the operator norm $ \lVert K_Q - K_\infty \rVert$ is bounded above by $ \lVert k_Q - k_\infty \rVert_{H_\Omega \otimes H_\Omega}$, and the convergence of $ \lVert K_Q - K_\infty \rVert$ to zero follows from the fact that  $ \lim_{Q\to\infty} \lVert k_Q - k_\infty \rVert_{H_\Omega\otimes H_\Omega} =  0 $, as stated in the conditions in Section~\ref{secKernelDelays}. To verify that $K_\infty $ and $ \tilde U^t$ commute, we proceed analogously to the proof of Theorem~\ref{thmSym}, using the shift invariance of $ k_\infty $ in~\eqref{eqKInv} and the fact that $ \tilde \Phi^t = \Phi^t \otimes I_Y $ preserves the measure $ \rho$ to compute
\begin{align*}
   K_\infty f(\omega ) &= \int_\Omega k_\infty( \omega, \omega' ) f( \omega' ) \, d\rho( \omega' ) \\
   &= \int_\Omega k_\infty( \omega, \tilde \Phi^t( \omega' ) ) f( \tilde \Phi^t(  \omega' ) ) \, d\rho( \omega' ) \\
   &= \int_\Omega k_\infty( \tilde \Phi^{-t}( \omega ),  \omega'  ) f( \tilde \Phi^t(  \omega' ) ) \, d\rho( \omega' ) \\
   &= \tilde U^{-t}  K_\infty \tilde U^t f( \omega ),
\end{align*}
where the equalities hold for $ \rho $-a.e.\ $\omega \in \Omega$. Pre-multiplying these expressions by $\tilde U^t $  leads to
\begin{displaymath}
  [ \tilde U^t, K_\infty ] = \tilde U^t K_\infty - K_\infty \tilde U^t = 0,
\end{displaymath} 
as claimed. 
\end{proof}

Theorem~\ref{thmKoop} generalizes the results in  \cite{Giannakis17,DasGiannakis17}, where analogous commutativity properties between Koopman and kernel integral operators were established for scalar-valued observables. By virtue of the commutativity between $ K_\infty $ and $ \tilde U^t $, at large numbers of delays $Q$, VSA decomposes the signal into patterns with a coherent temporal evolution associated with intrinsic frequencies of the dynamical system. In particular, being a compact operator, $ K_\infty$ has finite-dimensional eigenspaces, $ W_l$, corresponding to nonzero eigenvalues, whereas the eigenspaces of $\tilde{U}^t $ are infinite-dimensional, yet are spanned by eigenfunctions with a highly coherent (periodic) time evolution at the corresponding eigenfrequencies $ \alpha_j \in \mathbb{R}$,
\begin{displaymath}
    \tilde U^t \tilde z_j = e^{i\alpha_j t} \tilde z_j,  \quad \tilde z_j \in H_\Omega;
\end{displaymath}
see Appendix~\ref{appKoopEig} for further details. The commutativity between $K_\infty$ and $\tilde{U}^t$ allows us to identify finite-dimensional subspaces $W_l$ of $H_\Omega$ containing distinguished observables which are simultaneous eigenfunctions of $K_\infty$ and $ \tilde U^t $. As shown in Appendix~\ref{appQInf}, these eigenfunctions have the form
\begin{equation}
    \label{eqKoopEigVec}
    \tilde z_{jl} = z_{jl} \otimes \psi_{jl}, \quad j \in \{ 1, \ldots, \dim W_l \},    
\end{equation}
where $z_{jl} $ is an eigenfunction of the Koopman operator $U^t$ on $ H_X $ at eigenfrequency $ \alpha_{jl}$, and $ \psi_{jl}$ a spatial pattern in $H_Y$. Note that here we use a two-index notation, $ z_{jl} $ and $ \alpha_{jl} $, for Koopman eigenvalues and eigenfrequencies, respectively, to indicate the fact that they are associated with the $W_l$ eigenspace of $K_\infty$.  
We therefore deduce from~\eqref{eqKoopEigVec} that in the infinite-delay limit, the spatiotemporal patterns recovered by VSA have a separable, tensor-product structure similar to the conventional decomposition in~\eqref{eqPODDecomp} based on scalar kernel algorithms. It is important to note, however, that unlike~\eqref{eqPODDecomp}, the spatial patterns $ \psi_{jl} $ in~\eqref{eqKoopEigVec} are not necessarily given by linear projections of the observation map onto the corresponding scalar Koopman eigenfunctions $ z_{jl} \in H_X$ (called Koopman modes in the Koopman operator literature \cite{Mezic05}). In effect, taking into account the intrinsic structure of spatiotemporal data as vector-valued observables, allows VSA to recover more general spatial patterns than those associated with linear projections of observed data. 

Another consideration to keep in mind (which applies for many techniques utilizing delay-coordinate maps besides VSA) is that $K_\infty$ can only recover patterns in a subspace  $ \mathcal{D}_\Omega$ of $H_\Omega$ associated with the point spectrum  of the dynamical system generating the data (i.e., the Koopman eigenfrequencies; see Appendix~\ref{appKoopEig}). Dynamical systems of sufficient complexity will exhibit a non-trivial subspace $ \mathcal{D}_\Omega^\perp$ associated with the continuous spectrum, which does not admit a basis associated with Koopman eigenfunctions. One can show via analogous arguments to \cite{DasGiannakis17} that $ \mathcal{D}_\Omega^\perp$ is, in fact, contained in the nullspace of $K_\infty$, which is a potentially infinite-dimensional space not accessible from data. Of course, in practice, one always works with finitely many delays $Q$, which in principle allows recovery of patterns in $ \mathcal{D}_\Omega^\perp$ through eigenfunctions of $K_Q$, and these patterns will not have an asymptotically separable behavior as $Q \to \infty$ analogous to~\eqref{eqKoopEigVec}.

In light of the above considerations, we can therefore conclude that increasing $Q$ from small values will impart changes to the topology of the base space $B_Q$, and in particular the image of the support $ \tilde A $ of $ \rho $ under $\pi_Q$, but also the spectral properties of the operators $K_Q$. On the basis of classical delay-embedding theorems \cite{SauerEtAl91}, one would expect the topology of $\pi_Q(\tilde A)$ to eventually stabilize, in the sense that for every spatial point $y \in Y$ the set $A_y = A \times \{ y \} \subseteq \tilde A $ will map homeomorphically under $ \pi_Q $ for $Q $ greater than a finite number (that is, topologically, $\pi_Q(A_y)$ will be a ``copy'' of $A$). However, apart from special cases, $ K_Q $ will continue changing all the way to the asymptotic limit $Q \to \infty$ where Theorem~\ref{thmKoop} holds. 

Before closing this section, we also note that while VSA does not directly provide estimates of Koopman eigenfrequencies, such estimates could be computed through Galerkin approximation techniques utilizing the eigenspaces of $ K_Q$ at large $Q$ as trial and test spaces, as done in \cite{GiannakisEtAl15,Giannakis17,DasGiannakis17} for scalar-valued Koopman eigenfunctions. A study of such techniques in the context of vector-valued Koopman eigenfunctions (equivalently, eigenfunctions in $H_\Omega$) is beyond the scope of this work, though it is expected that their well-posedness and convergence properties should follow from fairly straightforward modification of the approach in \cite{GiannakisEtAl15,Giannakis17,DasGiannakis17}.

\subsubsection{Infinitely many delays with dynamical symmetries}

As a final asymptotic limit of interest, we consider the limit $ Q \to \infty$ under the assumption that a symmetry group $ G $ acts on $ H_\Omega$ via unitary operators $R^g_\Omega$, as described in Section~\ref{secSymmetries}. In that case, the commutation relations
\begin{displaymath}
  [ R^g_\Omega, \tilde U^t ] = [ R^g_\Omega, K_\infty ] = [ K_\infty, \tilde U^t ] = 0
\end{displaymath}
imply that there exist finite-dimensional subspaces of $ H_\Omega $ spanned by simultaneous eigenfunctions of $ R^g_\Omega $, $ \tilde U^t $, and $ K_\infty $. We know from~\eqref{eqKoopEigVec} that these eigenfunctions, $ \tilde z_{jl}$, are given by a tensor product between a Koopman eigenfunction $ z_{jl} \in H_X$ and a spatial pattern $\psi_{jl} \in H_Y$. It can further be shown (see Appendix~\ref{appSymEig}) that $ z_{jl} $ and $ \psi_{jl}$ are eigenfunctions of the unitary operators $R^g_X $ and $ R^g_Y $ , i.e.,
\begin{displaymath}
    R^g_X z_{jl} = \gamma^g_{X,jl} z_{jl}, \quad R^g_Y \psi_{jl} = \gamma^g_{Y,jl} \psi_{jl}, \quad \lvert \gamma^g_{X,jl} \rvert = \lvert \gamma^{g}_{Y,jl} \rvert = 1,
\end{displaymath}
and moreover the eigenvalues $ \gamma^g_{X,jl}$ and $ \gamma^g_{Y,jl}$ satisfy $ \gamma^g_{X,jl} \gamma^g_{Y,jl} = 1$. In particular, we have $ R^g_\Omega = R^g_X \otimes R^g_Y$, and the quantity $ \gamma^g_{\Omega,jl} = \gamma^g_{X,jl}  \gamma^g_{Y,jl}$ is equal to the eigenvalue of $ R^g_\Omega$ corresponding to $ \tilde z_{jl}$, which is equal to 1 by Theorem~\ref{thmSym}.

In summary, every simultaneous eigenfunction $ \tilde z_{jl}$ of $K_\infty$, $\tilde U^t $, and $ R^g_{\Omega}$ is characterized by three eigenvalues, namely (i) a kernel eigenvalue $ \lambda_l $ associated with $ K_\infty $; (ii) a Koopman eigenfrequency $ \alpha_{jl} $ associated with $ \tilde U^t $; and (iii) a spatial symmetry eigenvalue $ \gamma^g_{Y,jl} $ (which can be thought of as a ``wavenumber'' on $Y$).

\section{\label{secDataDriven}Data-driven approximation}

In this section, we consider the problem of approximating the eigenvalues and eigenfunctions of the kernel integral operators employed in VSA from a finite dataset consisting of time-ordered measurements of the vector-valued observable  $ \vec F$. Specifically, we assume that available to us are measurements $\vec F(x_0), \vec F(x_1), \ldots,\vec F( x_{N-1} ) $  taken along an (unknown) orbit $x_n = \Phi^{n\tau}(x_0)$ of the dynamics at the sampling interval $\tau$, starting from an initial state $x_0 \in X$. We also consider that each scalar field $\vec F(x_n) \in H_Y $ is sampled at a finite collection of points $ y_0, y_1, \ldots, y_{S-1} $ in $Y$. Given such data, and without assuming knowledge of the underlying dynamical flow and/or state space geometry, our goal is to construct a family of operators, whose eigenvalues and eigenfunctions converge, in a suitable sense, to those of $K_Q$, in an asymptotic limit of large data, $N, S \to \infty$. In essence, we seek to address a problem on spectral approximation of kernel integral operators from an unstructured grid of points $( x_n, y_s)$ in $ \Omega $.   

\subsection{ \label{secDataDrivenHilbert}Data-driven Hilbert spaces and kernel integral operators}

An immediate consequence of the fact that the dynamics is unknown is that the invariant measure $\mu$ defining the Hilbert space $H_X = L^2(X,\mu) $ is also unknown (arguably, apart from special cases, $\mu$ would be difficult to explicitly determine even if $\Phi^t$ were known). This means that instead of $H_X$ we only have access to an $N$-dimensional Hilbert space $H_{X,N} = L^2(X,\mu_N) $ associated with the sampling measure $ \mu_N = \sum_{n=0}^{N-1} \delta_{x_n}/N$ on the trajectory $X_N = \{ x_0, \ldots,x_{N-1} \}$, where $ \delta_{x_n} $ is the Dirac probability measure supported at $ x_n \in X $. This space consists of equivalence classes of functions on $X$ having common values on the finite set  $X_N  \subset X $, and is equipped with the inner product $ \langle f, g \rangle_{H_{X,N}} = \sum_{n=0}^{N-1} f^*(x_n) g(x_n)/ N$. Because every such equivalence class $f$  is uniquely characterized by $N$ complex numbers, $ f(x_0), \ldots, f(x_{N-1})$,  corresponding to the values of one of its representatives  on $X_N $, $H_{X,N}$ is isomorphic to $ \mathbb{C}^N$,  equipped with a normalized Euclidean inner product. Thus, we can represent every $ f \in H_{X,N}$ by an $N$-dimensional column vector $ \underline f = ( f(x_0), \ldots, f(x_{N-1} ) )^\top \in \mathbb{C}^N$, and every linear operator $ A : H_{X,N} \mapsto H_{X,N} $ by an $N\times N $ matrix $ \boldsymbol A $ such that $ \boldsymbol A \underline f $ is equal to the column-vector representation of $ A f $. In particular, associated with every scalar kernel $ k : X \times X \mapsto \mathbb{R} $ is a kernel integral operator $ K_N : H_{X,N} \mapsto H_{X,N} $, acting on $f \in H_{X,N}$ according to the formula (cf.\ \eqref{eqKScalar}) 
\begin{equation}
    \label{eqKN}
    K_N f(x_m ) = \int_X k(x_m,x_n) \, d\mu_N(x_n) = \frac{1}{N} \sum_{n=0}^{N-1} k(x_m,x_n) f( x_n ).
\end{equation}
This operator is represented by an $N\times N$ kernel matrix $ \boldsymbol K = [ k(x_m,x_n ) / N ] $. 

In the setting of spatiotemporal data analysis, one has to also take into account the finite sampling of the spatial domain, replacing  $H_Y =L^2(Y,\nu)$ by the $S$-dimensional Hilbert space  $H_{Y,S} = L^2(Y,\nu_S)$ associated with a discrete measure $ \nu_S = \sum_{s=0}^{S-1} w_{s,S} \delta_{y_s} $. Here, the $w_{s,S} $ are positive quadrature weights such that given any continuous function $f : Y \mapsto \mathbb{C}$, the quantity $ \sum_{s=0}^{S-1} w_{s,S} f(y_s) $ approximates $ \int_Y f \, d\nu$. For instance, if if $ \nu$ is a probability measure, and the sampling points $ y_s $ are equidistributed with respect to $ \nu $, a natural choice is uniform weights, $ w_{s,S} = 1/ S $. The space $H_{Y,S}$ is constructed analogously to $H_{X,N}$, and similarly we replace $H_\Omega = L^2(\Omega,\rho)$ by the $NS$-dimensional Hilbert space $H_{\Omega,NS} = L^2 (\Omega, \rho_{NS}) $, where $ \rho_{NS} = \mu_N \times \nu_S = \sum_{n=0}^{N-1} \sum_{s=0}^{S-1} w_{s,S} \delta_{\omega_{ns}} /N $ and  $ \omega_{ns} = ( x_n, y_s)$. Given a kernel $ k_Q : \Omega \times \Omega \mapsto \mathbb{R} $ satisfying the conditions in Section~\ref{secKernelDelays}, there is an associated integral operator $K_{Q,NS} : H_{\Omega,NS} \mapsto H_{\Omega,NS} $, defined analogously to~\eqref{eqKN} by
\begin{equation}
    \label{eqKNS}
    K_{Q,NS} f( \omega_{mr} ) = \int_{\Omega} k_Q( \omega_{mr}, \omega_{ns} ) \, d\rho_{NS}(\omega_{ns}),
\end{equation}
and represented by the $(NS)\times (NS) $ matrix $ \boldsymbol K = [ k_{Q}( \omega_{mr}, \omega_{ns} ) / N ]$. Solving the eigenvalue problem for $K_{Q,NS} $ (which is equivalent to the matrix eigenvalue problem for $ \boldsymbol K$) leads to eigenvalues $  \lambda_{j,NS} \in \mathbb{R} $  and eigenfunctions $ \phi_{j,NS} \in H_{\Omega,NS} $. We consider $ \lambda_{j,NS} $ and $ \phi_{j,NS} $  as data-driven approximations to their eigenvalues and eigenfunctions of $K_Q $ from~\eqref{eqKOmega}, respectively. The convergence properties of this approximation will be made precise in Section~\ref{secSpecConv}. 

A similar data-driven approximation can be performed for operators based on the Markov kernels $ p_Q $ from  Section~\ref{secMarkov}, which is our preferred class of kernels for VSA. However, in this case the kernels $ p_{Q,NS} : \Omega \times \Omega \mapsto \mathbb{R}$ associated with the approximating operators $P_{Q,NS} $ on $H_{\Omega,NS}$ are Markov-normalized with respect to the measure $ \rho_{NS}$, i.e., $ \int_\Omega p_{Q,NS}(\omega, \cdot) \, d\rho_{NS} = 1$, so they acquire a dependence on $N$ and $S$. Further details on this construction and its convergence properties can be found in Appendix~\ref{appDataDriven}. 

\subsection{\label{secSpecConv}Spectral convergence}

For a spectrally-consistent data-driven approximation scheme, we would like to able to establish that,   as $N$ and $S$ increase, the sequence of eigenvalues $  \lambda_{j,NS}$ of $K_{NS}$ converges to eigenvalue  $ \lambda_j $ of $ K $, and for an eigenfunction $ \phi_j$ of $K$ corresponding to $\lambda_j $ there exists a sequence of eigenfunctions $  \phi_{j,NS} $ of $ K_{NS}$ converging to it. While convergence of eigenvalues can be unambiguously understood in terms of convergence of real numbers, in the setting of interest here a suitable notion of convergence of eigenfunctions (or, more generally, eigenspaces) is not obvious, since $  \phi_{j,NS}$ and $ \phi_j $ lie in fundamentally different spaces. That is, there is no natural way of mapping equivalence classes of functions with respect to $ \rho_{NS}$ (i.e., elements of $H_{\Omega,NS}$) to equivalence classes of functions with respect to $ \rho$ (i.e., elements of $H_{\Omega}$), allowing one, e.g.,  to establish convergence of eigenfunctions in $H_\Omega$ norm. This issue is further complicated by the fact, that in many cases of interest, the support $A$ of the invariant measure $\mu$ is a non-smooth subset of $X$ of zero Lebesgue measure (e.g., a fractal attractor), and the sampled states $x_n $ do not lie exactly on $A$ (as that would require starting states $x_0 $ drawn from a measure zero subset of $X$, which is not feasible experimentally). In fact, the issues outlined above are common to many other data-driven techniques for analysis of dynamical systems besides VSA (e.g., POD and DMD), yet to our knowledge have not received sufficient attention in the literature. 

    Here, following \cite{DasGiannakis17}, we take advantage of the fact that, by the assumed continuity of VSA kernels, every kernel integral operator $K_Q : H_{\Omega} \mapsto H_{\Omega} $ from Section~\ref{secKernelDelays} can be also be viewed as an integral operator on the space $C(\mathcal{V})$ of continuous functions on any compact subset $ \mathcal{V} \subset \Omega $ containing the support of $ \rho $. This integral operator, denoted by $ \tilde K_Q : C(\mathcal{V}) \mapsto C(\mathcal{V})$, acts on continuous functions through the same integral formula as~\eqref{eqKOmega}, although the domains of $K_Q $ and $ \tilde K_Q$ are different. It is straightforward to verify that every eigenfunction $ \phi_j \in H_\Omega $ of $K_Q$ at nonzero eigenvalue $ \lambda_j $ has a unique continuous representative $ \tilde \phi_j \in C(\mathcal{V}) $, given by
    \begin{equation}
        \label{eqTildePhiK}
        \tilde \phi_j(\omega) = \frac{1}{\lambda_j} \int_{\Omega} k_Q( \omega, \omega' ) \phi_j( \omega' ) \, d\rho(\omega'), 
    \end{equation}
    and $ \tilde \phi_j $ is an eigenfunction of $ \tilde K_Q $ at the same eigenvalue $ \lambda_j $. Assuming further that $ \mathcal{V}$ also contains the supports of the measures $ \rho_{NS}$ for all $N,S \geq 1 $, we can define $ \tilde K_{Q,NS} : C(\mathcal{V}) \mapsto C(\mathcal{V})$ analogously to~\eqref{eqKNS}. Then, every eigenfunction $ \phi_{j,NS} \in H_{\Omega,NS} $ of $ K_{Q,NS}$ at nonzero corresponding eigenvalue $ \lambda_{j,NS}$ has a unique continuous representative $ \tilde \phi_{j,NS}$, with
    \begin{equation}
        \label{eqTildePhiKNS}
        \tilde \phi_{j,NS}(\omega) = \frac{1}{\lambda_j} \int_{\Omega} k_{Q}( \omega, \omega' ) \phi_{j,NS}( \omega' ) \, d\rho_{NS}(\omega'), 
    \end{equation}
    which is an eigenfunction of $ \tilde K_{Q,NS}$ at the same eigenvalue $ \lambda_{j,NS}$. 

    As is well known, the space $C(\mathcal{V})$ equipped with the uniform norm $ \lVert f \rVert_{C(\mathcal{V})} = \max_{\omega\in\mathcal{V}} \lvert f(\omega) \rvert$ becomes a Banach space, and it can further be shown that $\tilde K_{Q,NS}$ and $ \tilde K_Q$ are compact operators on this space. In other words, $C(\mathcal{V})$ can be used as a universal space to establish spectral convergence of $ \tilde K_{Q,NS}$ to $ \tilde K_Q$, using approximation techniques for compact operators on Banach spaces \cite{Chatelin11}. In \cite{VonLuxburgEtAl08}, Von Luxburg et al.\ use this approximation framework to establish convergence results for spectral clustering techniques, and their approach can naturally be adapted to show that, under natural assumptions, $ \tilde K_{Q,NS}$ indeed converges in spectrum to $ \tilde K_Q$. 

    In Appendix~\ref{appDataDriven}, we prove the following result:

\begin{thm}
    \label{thmDataDriven}
    Suppose that $\mathcal{V} \subseteq \Omega $ is a compact set containing the supports of $ \rho $ and the family of measures $ \rho_{NS} $, and assume that  $ \rho_{NS}$ converges weakly to $ \rho$, in the sense that 
    \begin{equation}
        \label{eqPhysicalM}
        \lim_{N,S\to\infty} \int_\Omega f \, d\rho_{NS} = \int_\Omega f \, d\rho, \quad \forall f \in C(\Omega).
    \end{equation}
    Then, for every nonzero eigenvalue $ \lambda_j $ of $ K_Q $, including multiplicities, there exist positive integers $ N_0, S_0   $ such that the eigenvalues $ \lambda_{j,NS} $ of $ K_{Q,NS} $ with $ N \geq N_0 $ and $ S \geq S_0$  converge, as $ N,S \to \infty $, to $ \lambda_j $. Moreover, for every eigenfunction $ \phi_j \in H_\Omega $ of $ K $ corresponding to $\lambda_j$, there exist eigenfunctions $ \phi_{j,NS} $ of $ K_{Q,NS} $ corresponding to $ \lambda_{j,NS} $,  whose continuous representatives $ \tilde \phi_{j,NS} $ from~\eqref{eqTildePhiKNS} converge uniformly on $ \mathcal{ V } $ to $ \tilde \phi_j $ from~\eqref{eqTildePhiK}. Moreover, analogous results hold for the eigenvalues and eigenfunctions of the Markov operators $P_{Q,NS} $ and $P_Q$.
\end{thm}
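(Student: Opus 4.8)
The plan is to carry out the entire argument in the Banach space $C(\mathcal{V})$, exploiting the reformulation already set up in the excerpt: $\tilde K_Q$ and $\tilde K_{Q,NS}$ are compact operators on $C(\mathcal{V})$ whose nonzero eigenvalues and continuous eigenfunctions coincide with those of $K_Q$ and $K_{Q,NS}$ through~\eqref{eqTildePhiK}--\eqref{eqTildePhiKNS}. I would then invoke the spectral approximation theory for collectively compact operators, following Von Luxburg et al.\ \cite{VonLuxburgEtAl08} and Chatelin \cite{Chatelin11}. That theory requires exactly two ingredients, which I would establish in turn: strong (pointwise) convergence $\tilde K_{Q,NS} \to \tilde K_Q$ on $C(\mathcal{V})$, and collective compactness of the family $\{ \tilde K_{Q,NS} \}_{N,S}$.

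For the strong convergence, fix $f \in C(\mathcal{V})$ and note that for each fixed $\omega \in \mathcal{V}$ the difference $\tilde K_{Q,NS} f(\omega) - \tilde K_Q f(\omega)$ equals $\int_\Omega k_Q(\omega,\cdot) f \, d(\rho_{NS} - \rho)$, which tends to zero by the weak convergence assumption~\eqref{eqPhysicalM}, since $k_Q(\omega,\cdot) f$ is continuous. The hard part is upgrading this pointwise-in-$\omega$ statement to convergence in the uniform norm of $C(\mathcal{V})$. I would do so by observing that, since $k_Q$ is continuous on the compact set $\mathcal{V} \times \mathcal{V}$, the assignment $\omega \mapsto k_Q(\omega,\cdot) f(\cdot)$ is a continuous map from $\mathcal{V}$ into $C(\mathcal{V})$, so its image $\{ k_Q(\omega,\cdot) f : \omega \in \mathcal{V} \}$ is a compact subset of $C(\mathcal{V})$. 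Weak-$*$ convergence of $\rho_{NS}$ to $\rho$ is uniform over compact families of test functions, which yields $\lVert \tilde K_{Q,NS} f - \tilde K_Q f \rVert_{C(\mathcal{V})} \to 0$.

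Collective compactness then follows from Arzel\`a--Ascoli: for all $N,S$ and all $f$ with $\lVert f \rVert_{C(\mathcal{V})} \leq 1$, the functions $\tilde K_{Q,NS} f$ are uniformly bounded (by $\sup_{\mathcal{V}\times\mathcal{V}} \lvert k_Q \rvert$ times the uniformly bounded total masses $\rho_{NS}(\Omega)$) and equicontinuous (by the uniform continuity of $k_Q$ in its first argument, uniformly over the second), so the set $\{ \tilde K_{Q,NS} f : \lVert f \rVert_{C(\mathcal{V})} \leq 1,\ N,S \geq 1 \}$ is relatively compact in $C(\mathcal{V})$. With $\tilde K_Q$ compact, $\tilde K_{Q,NS} \to \tilde K_Q$ strongly, and $\{ \tilde K_{Q,NS} \}$ collectively compact, Anselone's theorem delivers the spectral conclusions: each isolated nonzero eigenvalue $\lambda_j$ of $\tilde K_Q$ is approximated, with correct total multiplicity, by eigenvalues $\lambda_{j,NS}$ of $\tilde K_{Q,NS}$, and the associated Riesz spectral projections converge in operator norm. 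Norm convergence of the projections then lets me choose eigenfunctions $\tilde \phi_{j,NS}$ converging to $\tilde \phi_j$ uniformly on $\mathcal{V}$; passing back through~\eqref{eqTildePhiK}--\eqref{eqTildePhiKNS} gives the statement for the $H_{\Omega,NS}$-eigenfunctions.

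Finally, for the Markov operators $P_{Q,NS}$ and $P_Q$, the only additional point is that the diffusion-maps normalization factors are themselves integrals of continuous functions against $\rho_{NS}$; by the strict positivity and uniform lower bound of the Gaussian kernel on $\mathcal{V}$ recorded in Section~\ref{secCovGauss}, these factors are bounded away from zero uniformly in $N,S$, so the same weak-convergence-plus-equicontinuity argument shows they converge uniformly. Hence the normalized kernels $p_{Q,NS}$ converge uniformly to $p_Q$ on $\mathcal{V}\times\mathcal{V}$, and the corresponding operators again converge strongly and form a collectively compact family. The non-self-adjointness of $P_Q$ is immaterial, since the collectively-compact framework applies to general compact operators on Banach spaces, and the isolated, simple eigenvalue structure of $P_Q$ recorded in Section~\ref{secMarkov} guarantees each target eigenvalue is well separated. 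As anticipated, the one step demanding genuine care is the strong-convergence argument: weak convergence of the measures furnishes only pointwise-in-$\omega$ convergence for free, and it is the compactness of the kernel sections $\{ k_Q(\omega,\cdot) : \omega \in \mathcal{V} \}$ in $C(\mathcal{V})$ that upgrades this to the required uniform convergence.
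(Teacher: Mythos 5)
Your machinery is the same as the paper's: pass to $C(\mathcal{V})$ via the continuous representatives~\eqref{eqTildePhiK}--\eqref{eqTildePhiKNS}, establish strong convergence plus a compactness property of the approximating family, and invoke the Banach-space spectral approximation theory of \cite{VonLuxburgEtAl08,Chatelin11}. The paper packages your two ingredients as \emph{compact convergence} (Definition~\ref{defCC}), which is exactly what strong convergence plus collective compactness delivers; your upgrade from pointwise to uniform convergence via compactness of the kernel sections $\{k_Q(\omega,\cdot)f : \omega \in \mathcal{V}\}$ is the same fact the paper accesses through Glivenko--Cantelli classes (a compact subset of $C(\mathcal{V})$ is Glivenko--Cantelli for weakly convergent measures of uniformly bounded mass), and your Arzel\`a--Ascoli step matches the proof of Theorem~\ref{thmConv}(a). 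For the operators $K_{Q,NS}$ of~\eqref{eqKNS}, which integrate the \emph{same} kernel $k_Q$ against $\rho_{NS}$, your argument is complete.

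The gap is in your final paragraph, covering the Markov operators. For the kernels the paper actually proves the theorem for in Appendix~\ref{appDataDriven} --- the scaled Gaussian kernels~\eqref{eqKQ} --- the data-driven operator $P_{Q,NS}$ is \emph{not} built from $k_Q$: the scaling function $s_Q = (\sigma_Q \xi_Q)^\gamma$ of Appendix~\ref{appBandwidth} involves an integral against the unknown measure $\rho$ (through $\sigma_Q$) and a time derivative of the observation map (through $\xi_Q$), so the data-driven construction must replace $k_Q$ by an estimated kernel $k_{Q,NS}$ built from $s_{Q,NS} = (\sigma_{Q,NS}\,\xi_{Q,N})^\gamma$, with $\xi_{Q,N}$ a finite-difference approximation of the phase-space velocity. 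Your claim that ``the only additional point is that the diffusion-maps normalization factors are integrals of continuous functions against $\rho_{NS}$'' therefore omits an entire error source: one must prove $s_{Q,NS} \to s_Q$ uniformly on $\mathcal{V}$ (Lemma~\ref{lemScalingConv}) and hence $k_{Q,NS} \to k_Q$ uniformly (Corollary~\ref{corKConv}), which requires hypotheses your proposal never invokes --- $C^2$ regularity of the pointwise observation maps and a sampling schedule $\tau(N)\to 0$ with $N\tau(N)\to\infty$, so that the finite-difference velocity converges while ergodic averages still do. Accordingly, the paper bounds the error by \emph{three} terms in~\eqref{eqEtaN} (measure approximation, normalization-function approximation, kernel approximation), whereas your argument accounts only for the first two; note also that the data-driven normalization functions $r_{Q,NS}, l_{Q,NS}$ in~\eqref{eqPNKernel} are built from $k_{Q,NS}$ rather than $k_Q$, so their uniform convergence itself needs Corollary~\ref{corKConv} and not merely weak convergence of $\rho_{NS}$. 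Restricted to unscaled kernels your proof stands; as a proof of the theorem for the VSA operators as constructed, the kernel-estimation layer is missing.
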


A natural setting where the conditions stated in Theorem~\ref{thmDataDriven} are satisfied are dynamical systems with compact absorbing sets and associated physical measures. Specifically, for such systems we shall assume that there exists a Lebesgue measurable subset $\mathcal{U}$ of the state space manifold $X$, such that (i) $\mathcal{U}$ is forward-invariant, i.e., $ \Phi^t(\mathcal{U}) \subseteq \mathcal{U}$ for all $t\geq 1$; (ii) the topological closure $ \overline{\mathcal{U}}$ is a compact set containing the support of $\mu$; (iii) $\mathcal{U}$ has positive Lebesgue measure in $X$; and (iv) for any starting state $x_0 \in \mathcal{U}$, the corresponding sampling measures $\mu_N$ converge weakly to $\mu$, i.e., $ \lim_{N\to\infty} \int_X f \, d\mu_N = \int_X f \, d\mu$ for all $f \in C(X)$. Invariant measures exhibiting Properties (iii) and (iv) are known as physical measures \cite{Young02}; in such cases the set $\mathcal{U}$ is called a basin of $\mu$. Clearly, Properties (i)--(iv) are satisfied if $ \Phi^t : X \mapsto X $ is flow on a compact manifold with an ergodic invariant measure supported on the whole of $X$, but are also satisfied in more general settings, such as certain dissipative flows on noncompact manifolds (e.g., the Lorenz 63 system on $X=\mathbb{R}^3$ \cite{Lorenz63}). Assuming further that the measures $ \nu_S$ associated with the sampling points $y_0,\ldots, y_{S-1} $ and the corresponding quadrature weights $ w_{0,S}, \ldots, w_{S-1,S} $ on the spatial domain $Y$ converge weakly to $\nu$, i.e., $ \lim_{S\to\infty} \int_Y g \, d\nu_S = \int_Y g \, d\nu $ for every  $ g \in C(Y)$, the conditions in Theorem~\ref{thmConv} are met with $ \mathcal{V} = \overline{\mathcal{U}} \times Y $, and the measures $ \rho_{NS}$ constructed as described in Section~\ref{secDataDrivenHilbert} for any starting state $x_0 \in \mathcal{U}$. Under these conditions, the data-driven spatiotemporal patterns $ \phi_{j,NS}$ recovered by VSA converge for an experimentally accessible set of initial states in $X$. 

\section{\label{secApplications}Application to the Kuramoto-Sivashinsky Model}

\subsection{\label{secKSOverview}Overview of the Kuramoto Sivashinsky model}
The KS model, originally introduced as a model for wave propagation in a dissipative medium \cite{KuramotoTsuzuki76}, or laminar flame propagation \cite{Sivashinsky77}, is one of the most widely studied dissipative PDEs displaying spatiotemporal chaos.  On a one-dimensional spatial domain $ Y = [ 0, L ], L \geq 0 $, the governing evolution equation for the real-valued scalar field $ u( t, \cdot ) : Y \mapsto \mathbb{  R} $, $ t \geq 0 $, is given by
\begin{equation}
  \label{eqKS}
  \dot u =  - u \nabla u + \Delta u - \Delta^2 u,  
\end{equation}
where $ \nabla $ and $ \Delta = - \nabla^2 $ are the derivative and (positive definite) Laplace operators on $ Y$, respectively. In what follows, we always work with periodic boundary conditions, $ u( t, 0 ) = u(t, L) $, $ \nabla u(t, 0 ) = \nabla u( t, L ) $, \ldots, for all $t \geq 0 $. 

The domain size parameter $ L $ controls the dynamical complexity of the system. At small values of this parameter, the trivial solution $u=0$ is globally asymptotically stable, but as $L $ increases, the system undergoes a sequence of bifurcations, marked by the appearance of steady spatially periodic modes (fixed points), then traveling waves (periodic orbits), and progressively more complicated solutions leading to chaotic behavior for $ L \gtrsim 4 \times 2 \pi $ \cite{GreeneKim88,ArmbrusterEtAl89,KevrekidisEtAl90,CvitanovicEtAl09,TakeuchiEtAl11}.

A fundamental property of the KS system is that it possesses a global compact attractor, embedded within a finite-dimensional inertial manifold of class $ C^r $, $ r \geq 1 $ \cite{FoiasEtAl86,FoiasEtAl88,ConstantinEtAl89,JollyEtAl90,ChowEtAl92,Robinson94}. That is, there exists a $C^r$ submanifold $ \mathcal{ X } $ of the Hilbert space $ H_Y = L^2( Y, \nu ) $ with $ \nu $ set to the Lebesgue measure, which is invariant under the dynamics, and to which the solutions $ u( t, \cdot ) $ are exponentially attracted. This means that after the decay of initial transients, the effective degrees of freedom of the KS system, bounded above by the dimension of $ \mathcal{ X } $,  is finite. Dimension estimates of inertial manifolds \cite{Robinson94,JollyEtAl00} and attractors \cite{TajimaGreenside02} of the KS system as a function of  $ L $ indicate that the system exhibits extensive chaos, i.e., unbounded growth of the attractor dimension with $ L $.  As is well known, analogous results to those outlined above are not available for many other important models of complex spatiotemporal dynamics such as the Navier-Stokes equations.  

For our purposes, the availability of strong theoretical results and rich spatiotemporal dynamics makes the KS model particularly well-suited  to test the VSA framework. In our notation, an inertial manifold $ \mathcal{ X } $ of the KS system will act as the state space manifold $ X $, which is embedded in this case in $ H_Y $.  Moreover, the compact invariant set $ A $ will be a subset of the global attractor supporting an ergodic probability measure, $ \mu $. On $ X $, the dynamics is described by a $ C^r $ flow map $ \Phi^t : X \mapsto X $, $ t \in \mathbb{ R } $,  as in Section~\ref{secPrelim}. In particular, for every initial condition $ x_0 \in X $, the orbit $ t \mapsto x( t ) = \Phi^t( x_0 ) $ with $ t \geq 0 $ is the unique solution to~\eqref{eqKS} with initial condition $ x_0 $. While in practice the initial data will likely not lie on $ X $, the exponential tracking property of the dynamics ensures that for any admissible initial condition $ u \in H_Y $ there exists a trajectory $ x( t ) $ on $ X$ to which the evolution starting from $ u $ converges exponentially fast.  

As stated in Section~\ref{secSpecConv}, for data-driven approximation purposes, we will formally assume that  the measure $ \mu$ is physical. While, to our knowledge, there are no results in the literature addressing the existence of physical measures (with appropriate modifications to account for the infinite state space dimension) specifically for the KS system, recent results \cite{LuEtAl13,LianEtAl16} on infinite-dimensional dynamical systems that include the class of dissipative systems in which the KS system belongs indicate that analogs of the assumptions made in Section~\ref{secSpecConv} should hold.  

Another important feature of the KS system is that it admits nontrivial symmetry group actions on the spatial domain $ Y $, which have played a central role in bifurcation studies of this system \cite{GreeneKim88,ArmbrusterEtAl89,KevrekidisEtAl90,CvitanovicEtAl09}. In particular, it is a direct consequence of the structure of the governing equation~\eqref{eqKS} and the periodic boundary conditions that if $ u(x,t) $ is a solution, then so are $ u( x + \alpha, t )$ and $ u( -x, t ) $, where $ \alpha \in \mathbb{ R } $. As discussed in Section~\ref{secSymmetries}, this implies that the dynamics on the inertial manifold is equivariant under the actions induced by the orthogonal group $ O(2) $ and the reflection group on the circle. In particular, under the assumption that the $O(2)$ action preserves $\mu$,  the theoretical spatial patterns recovered by POD and comparable eigendecomposition techniques would be linear combinations of finitely many Fourier modes \cite{HolmesEtAl96}, which are arguably non-representative of the complex spatiotemporal patterns generated by the KS system. We emphasize that the existence of symmetries does not necessarily imply that they are inherited by data-driven operators for extracting spatial and temporal patterns constructed from a single orbit of the dynamics, since, e.g., the ergodic measure sampled by that orbit may evolve singularly under the symmetry group action.  While studies have determined that this type of symmetry breaking indeed occurs at certain dynamical regimes of the KS system \cite{AubryEtAl93}, the presence of symmetries still dominates the leading spatial patterns recovered by POD and comparable eigendecomposition techniques utilizing scalar-valued kernels. 

\subsection{\label{secKSResults}Analysis datasets}

In what follows, we present applications of VSA to data generated by the KS model at the chaotic regimes $ L = 22 $ and $ L = 94 $; these ``standard'' regimes have been investigated extensively in the literature (e.g., \cite{CvitanovicEtAl09,TakeuchiEtAl11}). We have integrated the model using the publicly available Matlab code accompanying Ref.~\cite{CvitanovicEtAl16}. This code is based on a Fourier pseudospectral discretization, and utilizes a fourth-order exponential time-differencing Runge-Kutta integrator appropriate for stiff problems. Throughout, we use 65 Fourier modes (which is equivalent to a uniform grid on $ Y $ with $ S = 65 $ gridpoints and uniform quadrature weights, $ w_{s,S} = 1 /S$), and a timestep of $ \tau = 0.25 $ natural time units. 

Each of the experiments described below starts from initial conditions given by setting the first four Fourier coefficients to 0.6 and the remaining 61 to zero. Before collecting data for analysis, we let the system equilibrate near its attractor for a time interval of 2500 natural time units.  We compute spatiotemporal patterns using the eigenfunctions $ \phi_{j,NS} $ of the data-driven Markov operator $ P_{Q,NS} $ as described in Section~\ref{secDataDriven}. For simplicity, for the rest of this section we will drop the $N$ and $S$ subscripts from our notation for $ \phi_{j,NS} $.  

In one of our $ L = 22 $ experiments, we also compare our results with spatiotemporal patterns computed via POD/PCA and NLSA (see Section~\ref{secBackground}).  The PCA and NLSA methods are applied to the same KS data as VSA, and in the case of NLSA we use the same number of delays. The POD patterns are computed via~\eqref{eqPODDecomp}, whereas those from NLSA are obtained via a procedure originally introduced in the context of SSA \cite{GhilEtAl02}. This procedure involves first reconstructing in delay-coordinate space through~\eqref{eqPODDecomp} applied to the observation map $\tilde F_Q $ from~\eqref{eqFQ}, and then projecting down to physical data space by averaging over consecutive delay windows; see \cite{GhilEtAl02,GiannakisMajda12a} for additional details. Empirically, this reconstruction approach is known to be more adept at capturing propagating signals than direct reconstruction of the observation map $ F $ via~\eqref{eqPODDecomp}, though in the KS experiments discussed below the results from the two-step NLSA/SSA reconstruction and direct reconstruction are very similar. 

\subsection{Results and discussion}

We begin by presenting VSA results obtained from dataset of $ N = 1000 $ samples taken at timestep $\tau = 0.25$ natural time units, using a small number of delays, $ Q = 15 $.  According to Section~\ref{secInfDelay}, at this small $ Q $ value VSA is expected to yield eigenfunctions $\phi_{j}$, which are approximately constant on the level sets of the input signal, and, with increasing $ j $, capture smaller-scale variations in the directions transverse to the level sets. As is evident in Fig.~\ref{figKS_L22_Q15}, the leading three nonconstant eigenfunctions, $ \phi_{1} $, $ \phi_{2} $, and $ \phi_{3} $, indeed display this behavior, featuring wavenumbers 2, 3, and 4, respectively, in the directions transverse to the level sets. This behavior continues for eigenfunctions $ \phi_{j} $ with higher $ j $. 

To assess the efficacy of these patterns in reconstructing the input signal, we compute their fractional explained variances 
\begin{equation}
    \label{eqFVar}
  \frac{ \lvert \langle F, \phi_{j} \rangle_{H_{\Omega,NS}}\rvert^2 }{ \lVert \phi_{j} \rVert_{H_{\Omega,NS}}^2 \lVert F \rVert_{H_{\Omega,NS}}^2 }, 
\end{equation}
where $\lVert \phi_{j} \rVert_{H_{\Omega_,NS}} = 1$ since we use normalized eigenfunctions. For the  $ \phi_{1} $, $ \phi_{2} $, and $ \phi_{3} $ eigenfunctions in Fig.~\ref{figKS_L22_Q15}, these quantities are 0.91, $5.2\times10^{-4}$, and 0.016, respectively, which demonstrates that even the one-term ($l=1$) reconstruction via~\eqref{eqVSADecomp} captures most of the signal variance.

\begin{figure}
    \centering\includegraphics[width=.5\linewidth]{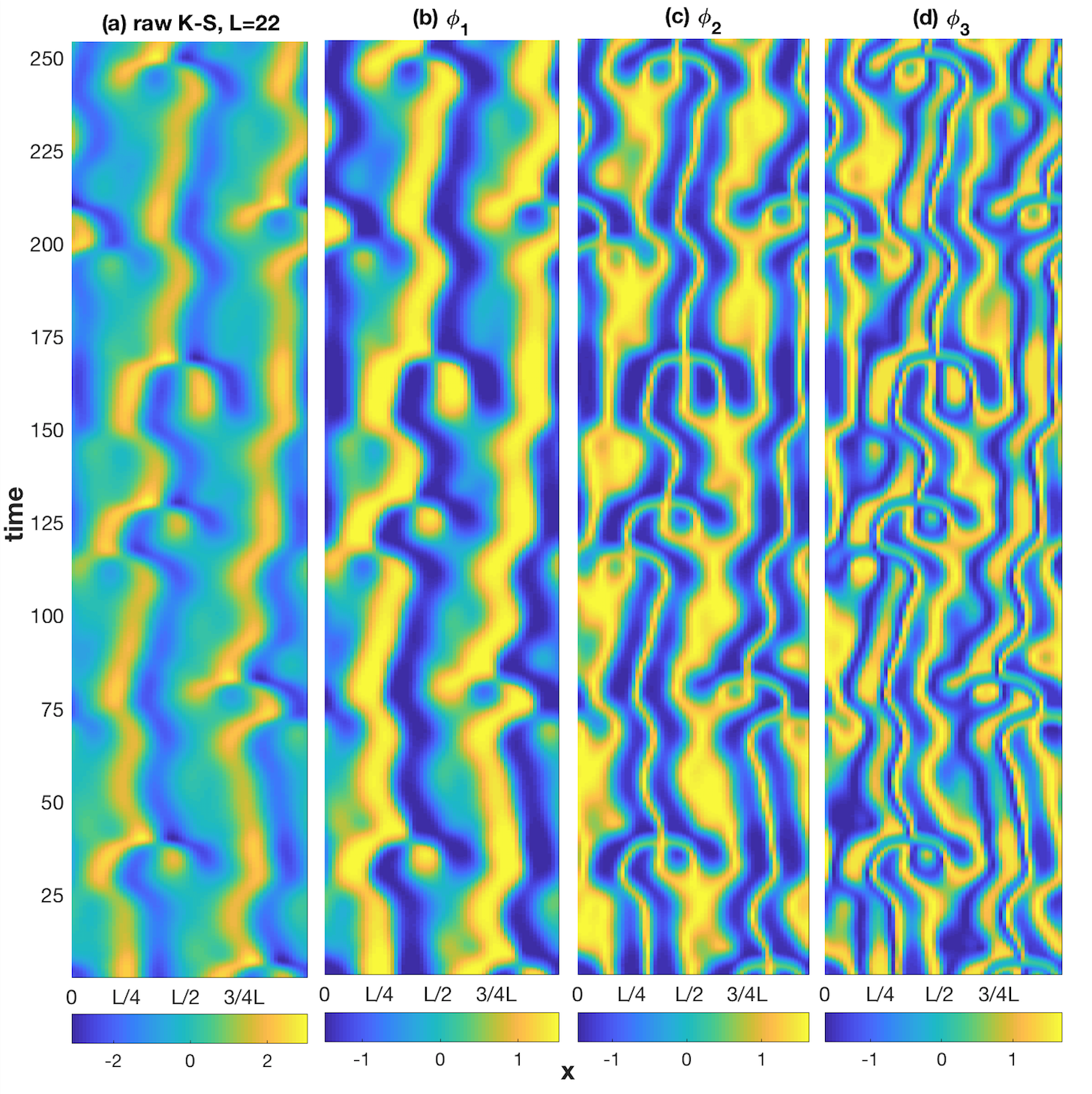}
    \caption{\label{figKS_L22_Q15}Raw data (a) and representative VSA spatiotemporal patterns $ \phi_{j} $ (b--d) for the KS model with $ L = 22 $ using $ Q = 15 $ delays. Notice that the patterns are approximately constant on the level sets of the raw data.}
\end{figure}

Next, we consider longer datasets with $ N = \text{10,000} $ samples (2500 natural time units), at $ L = 22 $ and 94, analyzed using $ Q = 500 $ delays. The raw data and representative VSA eigenfunctions from these analyses, as well as NLSA results for $L=22$, are displayed in Figures~\ref{figKS_L22_Q500} and~\ref{figKS_L94_Q500}, respectively. Figure~\ref{figKS_L22_Q500_detail} highlights a portions of the raw data and VSA eigenfunctions for $L=22$ over an interval spanning 1000 time units. Figure~\ref{figKS_L22_PCA} shows the $ L = 22 $ raw data and the leading five (in ordered of explained variance) spatiotemporal patterns recovered from this dataset by POD. As is customary, we order the recovered POD patterns in order of decreasing explained variance of the input data. In the case of NLSA, the patterns are ordered in decreasing order of the corresponding eigenvalue of the Markov operator associated with the NLSA kernel (analogous to the $ P_Q $ operator in VSA, but acting on the $ H_X $ space of scalar-valued observables). Note that since the vector-valued eigenfunctions from VSA are directly interpretable as spatiotemporal patterns (see Section~\ref{secVSA}), and the VSA decomposition from~\eqref{eqVSADecomp} is given by linear combinations of eigenfunctions with scalar-valued coefficients $c_j $, comparing VSA eigenfunctions with PCA and NLSA spatiotemporal patterns (which are formed by products of scalar-valued eigenfunctions of the corresponding kernel integral operators with spatial patterns) is meaningful, but since our depicted VSA patterns do not include multiplication by $ c_j $, these comparisons are to be made only up to scale.

At large $ Q $, we expect the eigenfunctions from VSA to lie approximately in finite-dimensional subspaces of the Hilbert space $ H $ of vector-valued observables associated with the point spectrum of the Koopman operator, thus acquiring timescale separation. This is clearly the case in the $L=22 $ eigenfunctions in Figs.~\ref{figKS_L22_Q500} and~\ref{figKS_L22_Q500_detail}, where $ \phi_{1} $ is seen to capture the evolution of the wavenumber $L/2$ structures, whereas $ \phi_{10} $ and $ \phi_{15} $ recover smaller-scale traveling waves embedded within the large-scale structures with a general direction of propagation either to the right ($\phi_{10}$) or left ($\phi_{15}$). The fractional explained variances associated with eigenfunctions $\phi_{1}$, $\phi_{10}$, and $\phi_{15}$ are 0.23, 0.047, and 0.017, respectively. As expected, these values are smaller than the 0.91 value due to eigenfunction $\phi_{1}$ for $Q=15$, but are still fairly high despite the intermittent nature of the input signal. Ranked with respect to fractional explained variance, $\phi_{1}$, $\phi_{10}$, and $\phi_{15}$ are the first, fourth, and fifth among the $Q=200$ VSA eigenfunctions. 

\begin{figure}
    \includegraphics[width=\linewidth]{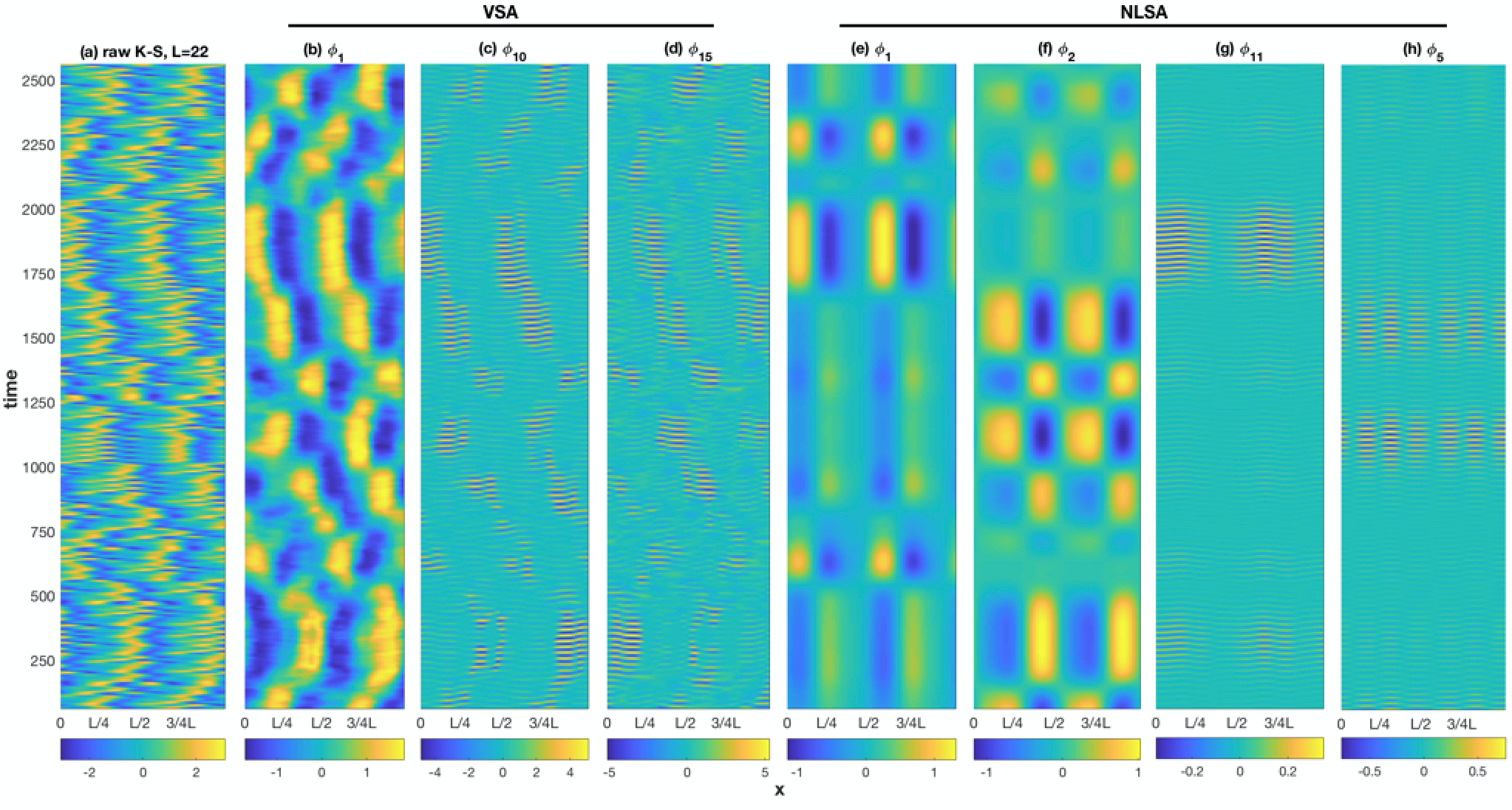}
    \caption{\label{figKS_L22_Q500} Raw data (a) and representative spatiotemporal patterns obtained through VSA (b--d) and NLSA (e--h) from the KS model with $ L = 22 $ using $ Q = 500 $ delays. The VSA pattern in (b) captures  an $O(2)$ family of unstable equilibria through an individual eigenfunction. Notice the manifestly non-separable character of this pattern with respect to the spatial and temporal coordinates. The patterns in (c, d) capture smaller-scale waves embedded within the structures in (b). The NLSA patterns in (e--h) exhibit a low-rank, separable behavior in space and time, and while they appear to capture the characteristic timescales of the large- (e, f) and small-scale waves (g, h), they are not representative of the intermittent character of the signal in space.}
\end{figure}

\begin{figure}
    \centering\includegraphics[width=\linewidth]{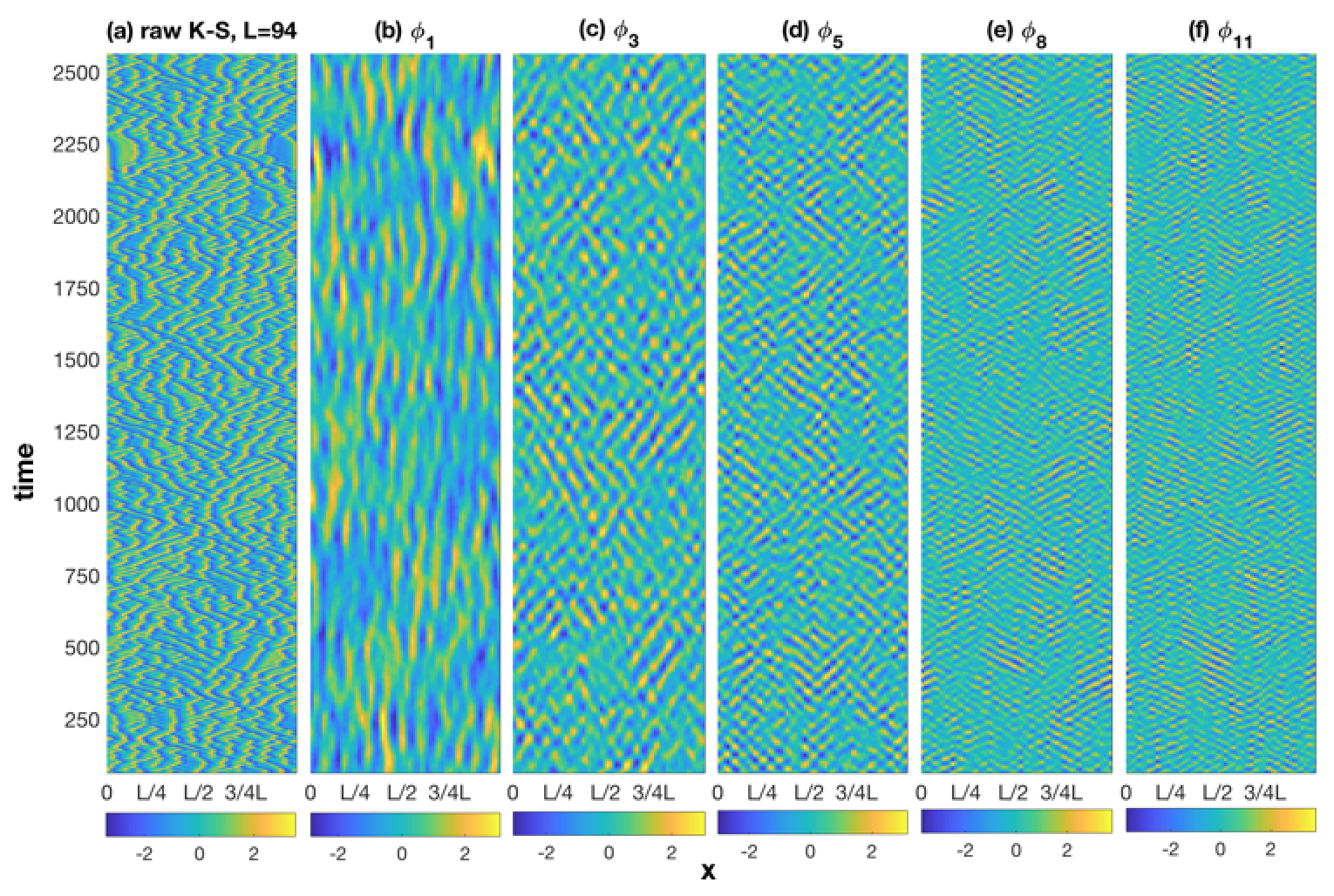}
    \caption{\label{figKS_L94_Q500}As in Fig.~\ref{figKS_L22_Q500}(a--d), but for the KS system with $ L= 94 $. (b) Vector-valued eigenfunctions capturing an $O(2)$ family of unstable equilibria. (c--f) Traveling-wave patterns.}
\end{figure}

\begin{figure}
    \centering\includegraphics[width=.5\linewidth]{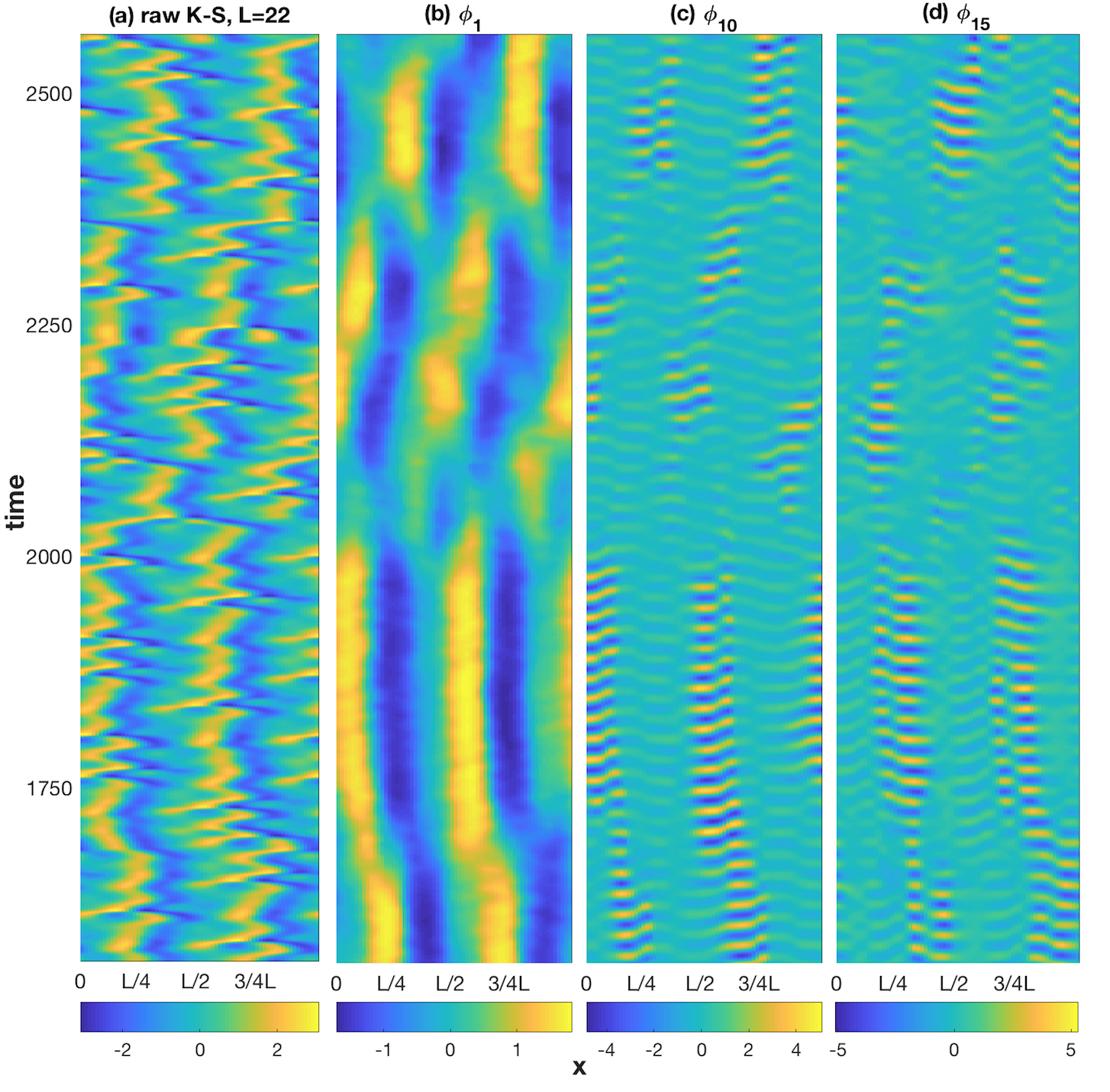}
    \caption{\label{figKS_L22_Q500_detail}As in Fig.~\ref{figKS_L22_Q500}(a--d), but for the time interval $ [ 1625, 2625] $, highlighting the features of the small-scale waves in (c, d).}
\end{figure}

\begin{figure}
    \centering\includegraphics[width=\linewidth]{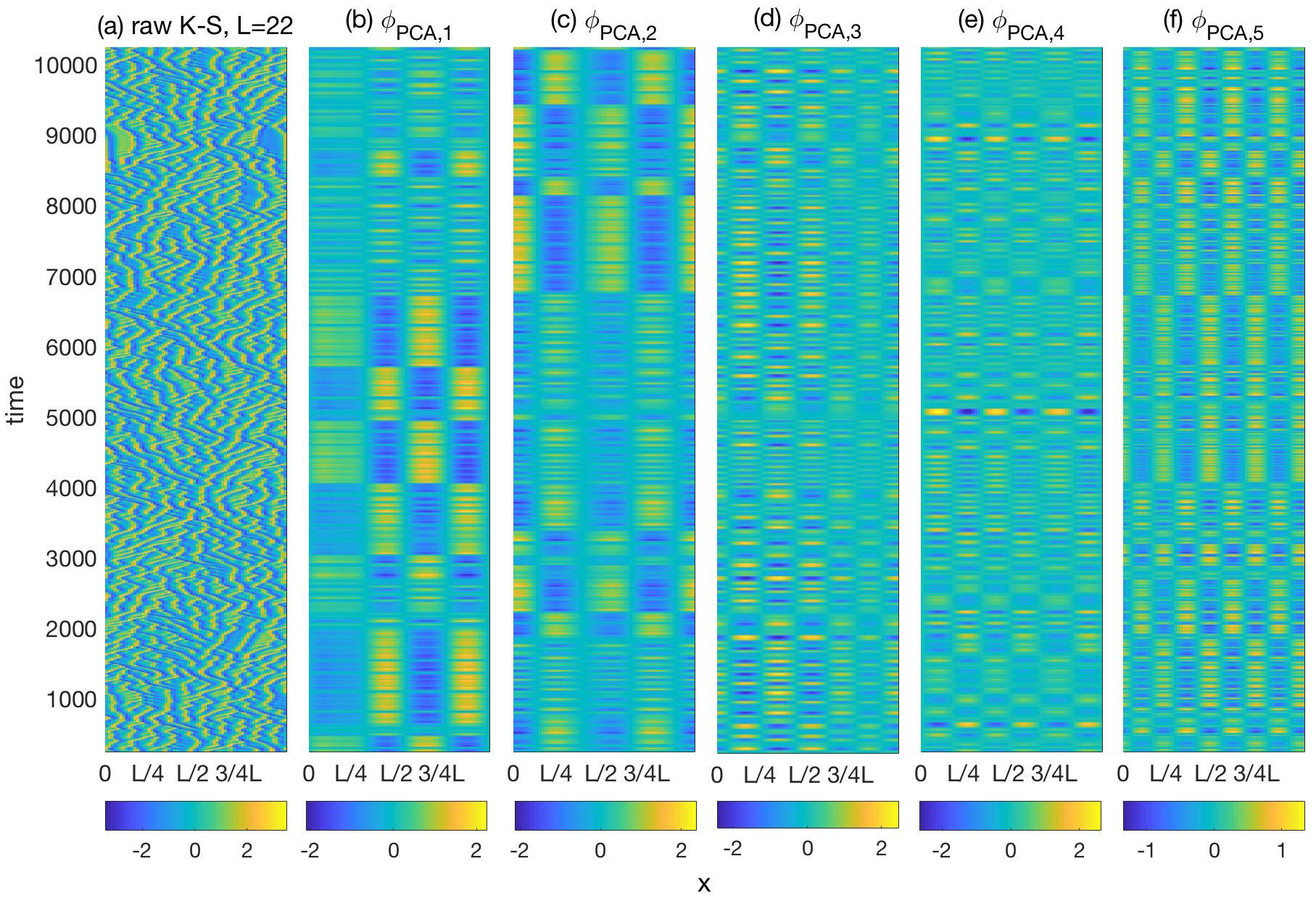}
    \caption{\label{figKS_L22_PCA}As in Fig.~\ref{figKS_L22_Q500}, but for spatiotemporal patterns recovered by POD/PCA.}
\end{figure}

In contrast, while the patterns from NLSA successfully separate the slow and fast timescales in the input signal (as expected theoretically at large $ Q $ \cite{Giannakis17,DasGiannakis17}), they are significantly less efficient in capturing its salient spatial features.  Consider, for example, the leading two NLSA patterns shown in Fig.~\ref{figKS_L22_Q500}(e,f). These patterns are clearly associated with the $O(2)$ family of wavenumber $L/2$ structures in the raw data, but because they have a low rank, they are unable to represent the intermittent spatial translations of these patterns produced by chaotic dynamics in this regime. Their fractional explained variances are 0.13 and 0.15, respectively.  Qualitatively, it appears that the NLSA patterns in Fig.~\ref{figKS_L22_Q500}(e, f) isolate periods during which the wavenumber $L/2$ structures are quasistationary, and translated relative to each other by $L/4$. In other words, it appears that NLSA captures the unstable equilibria that the system visits in the analysis time period through individual patterns, but does not adequately represent the transitory behavior associated with heteroclinic orbits connecting this family of equilibria. Moreover, due to the presence of the continuous $O(2)$ symmetry, a complete description of the spatiotemporal signal associated with the wavenumber $L/2$ structures would require several modes. In contrast, VSA effectively captures this dynamics through a small set of leading eigenfunctions. 

As can be seen in Fig.~\ref{figKS_L22_PCA}, POD would also require several modes to capture the wavenumber $ L/2$ unstable equilibria, but in this case the recovered patterns also exhibit an appreciable amount of mixing of the slow timescale characteristic of this family with faster timescales. Modulo this high-frequency mixing, the first (second) POD pattern appears to resemble the first (second) NLSA pattern. The fractional explained variance of the leading two POD patterns, amounting to 0.23 and 0.22, respectively, is higher than the corresponding variances from NLSA, but this is not too surprising given their additional frequency content. 

To summarize, these results demonstrate that NLSA improves upon PCA in that it achieves timescale separation through the use of delay-coordinate maps, and VSA further improves upon NLSA in that it quotients out the $O(2) $ symmetry of the system, allowing efficient representation of intermittent space-time signals associated with heteroclinic dynamics in the presence of this symmetry. In separate calculations, we have verified that the $L=22$ VSA patterns are robust under corruption of the data by i.i.d.\ Gaussian noise of variance up to 40\% of the raw signal variance. 
    
Turning now to the $L=94$ experiments, it is evident from Fig.~\ref{figKS_L94_Q500}(b) that the dynamical complexity in this regime is markedly higher than for $ L = 22 $, as multiple traveling and quasistationary waves can now be accommodated in the domain, resulting in a spatiotemporal signal with high intermittency in both space and time. Despite this complexity, the recovered eigenfunctions (Fig.~\ref{figKS_L94_Q500}(b--f)) decompose the signal into a pattern $\phi_{1}$ that captures the evolution of unstable fixed points and the heteroclinic connections between them, and other patterns, $ \phi_{3} $, $ \phi_{5} $, $\phi_{8}$, and  $\phi_{11} $,  dominated by traveling waves. The fractional explained variances associated with these patterns are $6.0 \times 10^{-3}$ ($\phi_{1}$), 0.020 ($\phi_{3}$), 0.040 ($\phi_{5}$), 0.067 ($\phi_{8}$), and 0.066 ($\phi_{11}$); that is, in this regime the traveling wave patterns are dominant in terms of explained variance. In general, the variance explained by individual eigenfunctions at $L=94$ is smaller than those identified for $L=22$, consistent with the higher dynamical complexity of the former regime. It is worthwhile noting that $L=94$ eigenfunction $\phi_{1}$ bears some qualitative similarities with the covariant Lyapunov vector (CLV) patterns identified at a nearby $(L=96)$ KS regime in \cite{TakeuchiEtAl11} (see Fig.~2 of that reference). Other VSA patterns also display qualitatively similar features to $\phi_{1}$ and to CLVs. While such similarities are intriguing, they should be interpreted with caution as the existence of connections between VSA and CLV techniques is an open question.

\section{\label{secConclusions}Conclusions}
           
We have presented a method for extracting spatiotemporal patterns from complex dynamical systems, which combines aspects of the theory of operator-valued kernels for machine learning with delay-coordinate maps of dynamical systems. A key element of this approach is that it operates directly on spaces of vector-valued observables appropriate for dynamical systems generating spatially extended patterns. This allows the extraction of spatiotemporal patterns through eigenfunctions of kernel integral operators with far more general structure than those captured by pairs of temporal and spatial modes in conventional eigendecomposition techniques utilizing scalar-valued kernels. In particular, our approach enables efficient and physically meaningful decomposition of signals with intermittency in both space and time, while naturally factoring out dynamical symmetries present in the data. By incorporating delay-coordinate maps, the recovered patterns lie, in the asymptotic limit of infinitely many delays, in finite-dimensional invariant subspaces of observables associated with the point spectrum of the Koopman operator of the system. This endows these patterns with high dynamical significance and the ability to decompose multiscale signals into distinct coherent modes. We demonstrated with applications to the KS model in chaotic regimes that VSA recovers intermittent patterns, such as heteroclinic orbits associated with translation families of unstable fixed points and traveling waves, with significantly higher skill than comparable eigendecomposition techniques operating on spaces of scalar-valued observables. We anticipate this framework to be applicable across a broad range of disciplines dealing with complex spatiotemporal data. 

\section*{Acknowledgments}

D.G.\ acknowledges support from NSF EAGER grant 1551489, ONR YIP grant N00014-16-1-2649, NSF grant DMS-1521775, and DARPA grant HR0011-16-C-0116. J.S.\ and A.O.\ acknowledge support from NSF EAGER grant 1551489. Z.Z.\ received support from NSF grant DMS-1521775. We thank Shuddho Das for stimulating conversations.

\appendix

\section{\label{appKoopman}Koopman operators on scalar- and vector-valued observables}

\subsection{\label{appKoopEig}Basic properties of Koopman operators and their eigenfunctions}
In this appendix, we outline some of the basic properties of the Koopman operator $ U^t $ acting on scalar-valued observables in $H_X $ and its lift $ \tilde U^t $ acting on scalar-valued observables in $ H_\Omega$ (and, by the isomorphism $ H \simeq H_\Omega $, on vector-valued observables in $H $). Additional details on these topics can be found in one of the many references in the literature on ergodic theory, e.g., \cite{BunimovichEtAl00,BudisicEtAl12,EisnerEtAl15}.  

We begin by noting that for the class of $C^1$ measure-preserving dynamical systems on manifolds studied here (see Section~\ref{secPrelim}), the group $U = \{ U^t \}_{t \in \mathbb{R}} $ of Koopman operators is a strongly continuous unitary group. This means that for every $f \in H_X $, the map $t \mapsto U^t f$ is continuous with respect to the $H_X $ norm at every $t \in \mathbb{R}$. By Stone's theorem, strong continuity of $U$ implies that there exists an unbounded, skew-adjoint operator $V : D(V) \mapsto H_X$ with dense domain $D(V) \subset H_X$, called the generator of $U$, such that $U^t = e^{tV}$. This operator completely characterizes Koopman group. Its action on an observable $f \in D(V)$ is given by
\begin{displaymath}
  V f  = \lim_{t\to0} \frac{ f \circ \Phi^t - f}{ t }, 
\end{displaymath}
where the limit is taken with respect to the $ H_X $ norm. If $ f $ is a differentiable function in $ C^1(X  )$, then $ V f = v( f ) $, where $ v $ is the vector field of the dynamics.   

A distinguished class of observables in $H_X$ are the eigenfunctions of the generator of the Koopman group. Every such eigenfunction, $z_j$, satisfies the  equation
\begin{displaymath}
  V z_j = i \alpha_j z_j,
\end{displaymath}
where $\alpha_j$ is a real frequency, intrinsic to the dynamical system. In the presence of ergodicity (assumed here), all eigenvalues of $V$ are simple, and eigenfunctions corresponding to distinct eigenvalues are orthogonal. Moreover, the  eigenfunctions can be normalized so that $\lvert z_j(x)\rvert=1$ for $\mu$-almost every $x \in X$. That is, Koopman eigenfunctions of ergodic dynamical systems can be normalized to take values on the unit circle in the complex plane, much like the functions $ e^{i\omega t} $ in Fourier analysis. 

Every eigenfunction $ z_j $ of $V$ at eigenvalue $ i \alpha_j $ is also an eigenfunction of $U^t$, corresponding to the eigenvalue $\Lambda_j^t=e^{i\alpha_jt}$. This means that along an orbit of the dynamical system, $z_j$ evolves purely by multiplication by a periodic phase factor, viz.
\begin{displaymath}
  U^tz_j(x) = z_j( \Phi^t(x)) = e^{i\alpha_jt}z_j(x),
\end{displaymath}   
where the equality holds for $ \mu $-almost every $ x \in X $. This property makes Koopman eigenfunctions highly predictable observables, which warrant identification from data. In general, the evolution of any observable $f$ lying in the closed subspace $\mathcal{D}_X = \overline{\spn\{ z_j \}}$ of $H_X $ spanned by Koopman eigenfunctions has the closed-form expansion 
\begin{equation}
  \label{eqUTFDisc}
  U^t f = \sum_j e^{i\alpha_jt} c_j z_j, \quad c_j = \langle z_j, f \rangle_{H_X}.
\end{equation}
This shows that the evolution of observables in $ \mathcal{ D}_X $ can be characterized as a countable sum of Koopman eigenfunctions with time-periodic phase factors. 

Koopman eigenvalues and eigenfunctions of ergodic systems also have an important group property; namely, if $z_j $ and $z_k$ are eigenfunctions of $V$ at eigenvalue $i \alpha_j $ and $i \alpha_k$, respectively, then the product $z_j z_k$ is also an eigenfunction, corresponding to the eigenvalue $i( \alpha_j + \alpha_k)$. Thus, the eigenvalues and eigenfunctions of the Koopman generator form groups, with addition of complex numbers and multiplication of complex-valued functions acting as the group operations, respectively. If, in addition, the eigenfunctions are continuous (which is assumed here), these groups are finitely generated. Specifically, in that case there exists a finite collection of rationally-independent eigenfrequencies $ \tilde \alpha_1, \ldots, \tilde \alpha_l $, such that every eigenfrequency has the form $ \alpha_{j} = \sum_{k=1}^l j_k \tilde \alpha_k $, where $ j = ( j_1, \ldots, j_l ) $ is a vector of integers. Moreover, the Koopman eigenfunction corresponding to eigenfrequency  $ \alpha_{ j } $ is given by $ z_{j} = \tilde z_1^{j_1} \cdots \tilde z_l^{j_l} $, where $ \tilde z_1, \ldots, \tilde z_l $ are the eigenfunctions corresponding to $ \tilde \alpha_1, \ldots, \tilde \alpha_l $, respectively. It follows from these facts in conjunction with~\eqref{eqUTFDisc} that the evolution of every observable in $\mathcal{D}_X$ can be uniquely determined given knowledge of finitely many Koopman eigenfunctions and their corresponding eigenfrequencies. 

Yet, despite these attractive properties, in typical systems, not every observable will admit a Koopman eigenfunction expansion as in~\eqref{eqUTFDisc}; that is, $\mathcal{D}_X$ will generally be a strict subspace of $H_X$. In such cases, we have the orthogonal decomposition
\begin{equation}
  \label{eqHADecomp}
  H_X = \mathcal{D}_X \oplus \mathcal{D}_X^\perp,
\end{equation}
which is invariant under the action of $ U^t $ for all $  t\in \mathbb{R}$. For observables in the orthogonal complement $ \mathcal{D}_X^\perp $ of $\mathcal{D}_X$ dynamical evolution is not determined by~\eqref{eqUTFDisc}, but rather by a spectral expansion involving a continuous spectral density (intuitively, an uncountable set of frequencies). This evolution can exhibit  the characteristic behaviors associated with chaotic dynamics, such as decay of temporal correlations. In particular, it can be shown that for any $ f\in \mathcal{ D}_X^\perp $ and $ g \in  H_X $, the quantity $ \int_0^t \lvert \langle g, U^s f \rangle_{H_X} \rvert \, ds/ t $ vanishes as $ \lvert t \rvert \to \infty $.

We now turn to the unitary group $ \tilde U = \{ \tilde U^t \}_{t\in\mathbb{R}} $ associated with the Koopman operators $ \tilde U^t $ on $ H_\Omega $. As stated in Section~\ref{secGroupSpec}, these operators are obtained by a trivial lift $ \tilde U^t = U^t  \otimes I_{H_Y} $ of the Koopman operators on $ H_X $; equivalently, we have $ \tilde U^t f = U^t \tilde \Phi^t $, where $ \tilde \Phi^t = \Phi^t \otimes I_{Y}$, and $I_Y$ is the identity map on $ Y $. The group $ \tilde U $ is generated by the densely-defined, skew-adjoint operator $ \tilde V : D( \tilde V ) \mapsto H_\Omega $,  
\begin{equation}
  \label{eqVDecomp}
    \tilde V f = \lim_{t\to0} \frac{ f \circ \tilde \Phi^t - f }{  t }, 
\end{equation}  
which is an extension of $ V \otimes I_{H_Y} $. Moreover, analogously to the decomposition in~\eqref{eqHADecomp}, there exists an orthogonal decomposition
\begin{displaymath}
  H_\Omega = \mathcal{ D}_\Omega \oplus \mathcal{ D}_\Omega^\perp, \quad \mathcal{ D}_\Omega = \mathcal{ D}_X \otimes H_Y, \quad \mathcal{ D}^\perp_\Omega = \mathcal{ D}^\perp_X \otimes H_Y,
\end{displaymath}
which is invariant under $\tilde U^t$ for all $t \in \mathbb{R}$. It is straightforward to verify that the eigenvalues of $ \tilde U^t $ are identical to those of $ U^t $, i.e., $ \Lambda^t = e^{\alpha t} $ for some eigenfrequency $ \alpha \in \mathbb{R}$, and every eigenfunction $ \tilde z $ at eigenvalue $ \Lambda^t $ has the form
\begin{equation}
  \label{eqTildeZTensor}
  \tilde z = z \otimes f,
\end{equation}
where $ z \in H_X $ is an eigenfunction of $ U^t $ at the same eigenvalue (unique up to normalization by ergodicity), and $ f $ an arbitrary spatial pattern in $ H_Y $. 

\subsection{\label{appQInf}Common eigenfunctions with kernel integral operators}

We now examine the properties of common eigenfunctions between the Koopman operators $ \tilde U^t$ on $ H_\Omega$ and the kernel integral operators $K_\infty$ from Theorem~\ref{thmKoop} with which they commute. In particular, let $ \tilde z \in H_\Omega $ be an eigenfunction of $ \tilde U^t $ at eigenvalue $\Lambda^t $. Then, 
\begin{equation}
  \label{eqUTPInf}
  \tilde U^t K_\infty \tilde z = K_\infty \tilde U^t \tilde z = \Lambda^t K_\infty \tilde z,
\end{equation}      
which implies that $ K_\infty z $ is also an eigenfunction of $ \tilde U^t $ at the same eigenvalue. As stated in Appendix~\ref{appKoopEig}, the eigenvalues of $ \tilde U^t $ are identical to those of the Koopman operator $ U^t $ on $ H_A$. However, unlike those of $ U^t $, the eigenvalues of $ \tilde U^t $ are not simple, and we cannot conclude that $ K_\infty \tilde z = \lambda \tilde z $ for some number $ \lambda $; i.e., it is not necessarily the case that $ \tilde z $ is also an eigenfunction of $ K_\infty $ (despite the fact that~\eqref{eqUTPInf} implies that every eigenspace of $ \tilde U^t$ is invariant under  $K_\infty$). In fact,  the eigenspaces of $ \tilde U^t $ are infinite-dimensional, and there is no a priori distinguished set of spatiotemporal patterns in each eigenspace. 

To identify a distinguished set of spatiotemporal patterns associated with Koopman eigenfunctions, we take advantage of the fact that $ K_\infty $ is a compact operator with finite-dimensional eigenspaces corresponding to nonzero eigenvalues. For each such eigenspace, there exists an orthonormal basis consisting of simultaneous eigenfunctions of $ K_\infty $ and $ \tilde U^t $. To verify this explicitly, let $ W_l \subset H_\Omega$ be the eigenspace of $ K_\infty $ corresponding to eigenvalue $ \lambda_l \neq 0 $, and $ f $ an arbitrary element of $ W_l $. Since 
\begin{displaymath}
  K_\infty \tilde U^t f = \tilde U^t K_\infty  f  = \lambda_l \tilde U^t f,
\end{displaymath} 
we can conclude that $ \tilde U^t f \in W_l $; i.e., that $ W_l $ is a finite-dimensional invariant subspace of $ H_\Omega $ under $ \tilde U^t $. Choosing an orthonormal basis $ \{ \phi_{1l},  \ldots, \phi_{m_ll} \} $ for this space, where $ m_l = \dim W_l $, we can expand $ f = \sum_{j=1}^{m_l} c_j \phi_{jl} $ with $ c_j = \langle \phi_{jl}, f \rangle_{H_\Omega} $, and compute 
\begin{equation}
  \label{eqUTF}
  \tilde U^t f = \sum_{i,j=1}^{m_l} \phi_{il} \tilde U_{ij} c_j, \quad \tilde U_{ij} = \langle \phi_{il}, \tilde U^t \phi_{jl} \rangle_{H_\Omega}. 
\end{equation}
By unitarity of $ \tilde U^t $, the $ m_l \times m_l $ matrix $ \mathsf U $ with elements $ \tilde U_{ij} $ is unitary, and therefore unitarily diagonalizable. Let then $ \{ v_j \}_{j=1}^{m_l} $ with $ v_j = ( v_{1j}, \ldots, v_{m_lj} )^\top $ be an orthonormal basis of $ \mathbb{ C }^{m_l} $ consisting of eigenvectors of $ \mathsf{ U } $, and $ \Lambda^t_{1l}, \ldots, \Lambda^t_{m_ll}$ be the corresponding eigenvalues. It is a direct consequence of~\eqref{eqUTF} that the set $ \{ \tilde z_{1l}, \ldots, \tilde z_{m_ll}\} $ with $ \tilde z_{jl} = \sum_{k=1}^{m_l} v_{kj} \phi_{kl} $ is an orthonormal basis of $ W_l $ consisting of Koopman eigenfunctions corresponding to the eigenvalues $ \Lambda^t_{jl} $, which much thus be given by $ \Lambda_{jl} = e^{i\alpha_{jl}t}$ for some Koopman eigenfrequency $ \alpha_{jl} \in \mathbb{R} $. Since every element of $ W_l $ is an eigenfunction of $ K_\infty $, we conclude that the $ \tilde z_{jl} $ are simultaneous eigenfunctions of $ \tilde U^t $ and $ K_\infty $. 

\section{\label{appSymmetries}Symmetry group actions}

\subsection{Basic definitions}
Let $ G $ be a topological group with a left action on the spatial domain $ Y $. By that, we mean that there exists a map $ \Gamma_Y : G \times Y \mapsto Y $ with the following properties: 
\begin{enumerate}
  \item $ \Gamma_Y $ is continuous, and $ \Gamma_Y( g, \cdot ) : Y \mapsto Y $ is a homeomorphism for all $ g \in G$.
  \item $ \Gamma_Y $ is compatible with the group structure of $G$; that is, for all $y\in Y $ and $g,g'\in G$, 
    \begin{displaymath}    
      \Gamma_Y(gg',y)= \Gamma_Y(g,\Gamma_Y(g',y)),
    \end{displaymath}
    and $ \Gamma_Y(e,y) = y $, where $e$ is the identity element of $G$.
\end{enumerate}
Given $g \in G$, we abbreviate the map $\Gamma_Y(g,\cdot) : Y \mapsto Y $ by $\Gamma_Y^g$. Note that the (continuous) inverse of this map is given by $\Gamma^{g^{-1}}_Y$.  

Assume now that $ \Gamma^g_Y$ preserves null sets with respect to the measure $ \nu $. Then, action of $G$ on $Y$ induces a continuous left action on the Hilbert space $ H_Y $ such that the action map $ \Gamma_{H_Y}^g : H_Y \mapsto H_Y $ sends $ u \in H_Y $ to $ u \circ \Gamma_Y^{g^{-1}} $. In this setting, $ G $ is considered to be a dynamical symmetry group  if the following hold for all $ g \in G $ (e.g., \cite{HolmesEtAl96}):
\begin{enumerate}
  \item The state space manifold $ X \subset H_Y $ is invariant under $ \Gamma_{H_Y}^g $. Thus, we obtain a left group action $ \Gamma_X^g $ on $ X $ by restriction of $ \Gamma_{H_Y} $.
  \item $ \Gamma^g_X $ is differentiable for all $ g \in G $, and the vector field $ v $ generating the dynamics $ \Phi^t : X \mapsto X $ is invariant under the pushforward map $ \Gamma_{X*}^g  $ associated with $ \Gamma^g_X$. That is, for every $g \in G$, $ x \in X $, and $ f \in C^1(X) $, we have 
    \begin{displaymath}
      \Gamma_{X^*}^g( v \rvert_x )( f ) = v\rvert_{\Gamma^g_X(x)}( f ),
    \end{displaymath}
    or, equivalently, 
    \begin{displaymath}
        v\rvert_x( f \circ \Gamma_X^{g} ) = v\rvert_{\Gamma^g_X(x)}( f ) . 
    \end{displaymath} 
\end{enumerate}
Note that the well-definition of $\Gamma_{X^*}^g$ as a map on vector fields relies on the fact that $\Gamma_X^g$ is a diffeomorphism (which is in turn a consequence of the fact that $\Gamma^g_X$ is a differentiable group action). 

\subsection{\label{appSymEig} Common eigenfunctions with Koopman operators}

In this section, we examine the structure of common eigenfunction between the Koopman operator $ \tilde U^t = U^t \otimes I_{H_Y} $ on $H_\Omega$ and the unitary representatives $ R^g_\Omega = R^g_X \otimes R^g_Y $ of the symmetry group $ G $ from Section~\ref{secGroupSpec}, under the assumption that $ U^t$ commutes with $ R^g_X$ (which is equivalent to $\tilde U^t$ commuting with $R^g_\Omega$). As noted in Appendix~\ref{appKoopEig}, every eigenfunction $ \tilde z $ of $\tilde U^t $ has the form $ \tilde z = z \otimes \psi $, where $ z $ is an eigenfunction of $ U^t$ corresponding to a simple eigenvalue $ \Lambda^t$, and $ \psi $ a spatial pattern in $H_Y$. As result, because $U^t$ and $R^g_X$ commute, we have
\begin{equation}
  \label{eqURA}
   U^t R^g_X  z = R^g_X U^t z = \Lambda^t R^g_X z,
\end{equation}
which shows that $ R^g_X  z $ lies in the same Koopman eigenspace as $  z $. Thus, since all eigenspaces of $ U^t $ are one-dimensional, and $R^g_X$ is unitary, there exists a complex number $ \gamma_X^g $ with $ \lvert \gamma^g_X \rvert = 1$ such that
\begin{equation}
  \label{eqRAEig}
  R^g_X z = \gamma_X^g z;
\end{equation}
in other words, $ z $ is an eigenfunction of $ R^g_X $ at eigenvalue $ \gamma_X^g $. Note that the $\gamma^g_X$ are not necessarily simple eigenvalues.  

Next, the commutativity between $ \tilde U^t$ and $ R^g_\Omega$,  in conjunction with~\eqref{eqRAEig}, leads to
\begin{displaymath}
  R^g_\Omega \tilde z =  ( R^g_X \otimes R^g_Y ) ( z \otimes \psi ) = ( \gamma_X^g z ) \otimes ( R^g_Y \psi ), 
\end{displaymath}
which implies that $ \tilde z $ is an eigenfunction of  $ R^g_\Omega $ if and only if $ \psi $ is an eigenfunction of $ R^g_Y $. The $ R^g_\Omega $ eigenvalue corresponding to $ \tilde z $ is then given by 
\begin{displaymath}
  \gamma^g_\Omega = \gamma^g_Y / \gamma^g_X, 
\end{displaymath}
where $ \gamma^g_Y $ is the $ R^g_Y $ eigenvalue corresponding to $ \psi $. Further, because $ R^g_X $ is unitary, we have 
\begin{displaymath}
  \gamma^g_\Omega =  \gamma^g_Y \gamma^{g*}_X = \gamma^g_Y \gamma^{g^{-1}}_X.
\end{displaymath}  
We have thus obtained a characterization of the common eigenspaces of $ \tilde U^t $ and $ R_\Omega^g $.

\section{\label{appVSAKernel}Construction and properties of VSA kernels}

In this appendix, we describe in detail some aspects of the VSA kernel construction, namely the choice of distance scaling function for the kernels $k_Q $ in~\eqref{eqKQ} (Appendix~\ref{appBandwidth}), and the normalization procedure to obtain the Markov kernels $p_Q$ (Appendix~\ref{appMarkov}). We also establish some results on the behavior of these kernels in the limit of infinite delays, $Q \to \infty$. Throughout, $M = A \times Y \subseteq \Omega $ will denote the compact support of the measure $\rho$.  

\subsection{\label{appBandwidth}Choice of scaling function}

The distance scaling function $ s_Q $ is based on the corresponding function introduced in \cite{Giannakis17}, which has dependencies on both the local sampling density and time tendency of the data. 

\subsubsection{\label{appDensity}Local density function}

Let $ \bar k_Q : \Omega \times \Omega \mapsto \mathbb{R} $ denote the unscaled Gaussian kernel from~\eqref{eqKGaussVSA}, and $\bar K_Q : H_\Omega \mapsto H_\Omega $ the corresponding integral operator. Following \cite{BerryHarlim16}, we define the function
\begin{equation}
  \label{eqSigmaQ}
  \sigma_Q = \bar K_Q 1_\Omega = \int_\Omega \bar k_Q( \cdot, \omega ) \, d\rho(\omega ),
\end{equation}
which is continuous, strictly positive, and bounded away from zero on compact sets. It is a standard result from the theory of kernel density estimation that if the support of the measure $ \rho $ is a smooth, $ m $-dimensional Riemannian manifold, denoted $M$, and the delay-coordinate observation map $ \tilde F_Q : \Omega \mapsto \mathbb{ R }^Q $ from~\eqref{eqFQ} is an embedding of $M $ into $ \mathbb{R}^Q$, then, as $ \epsilon \to 0 $, the quantity $ \bar \sigma_Q( \omega ) = \sigma_Q( \omega )  / ( 2 \pi \epsilon^{m/2} )  $ converges for every $ \omega \in M $ to the density $ \frac{ d\rho }{ d\text{vol} }( \omega )  $ of the measure $ \rho $  with respect to the Riemannian measure $ \text{vol} $ on $ M$. Moreover, $ \bar \sigma_Q $  has physical dimension (units) of length$^{-m} $, and as a result $ \bar \sigma_Q^{-1/m} $ assigns a characteristic length at each point in $ M $.  Here, we do not assume that $ M $ has the structure of a smooth manifold, so we will not be taking $ \epsilon \to 0 $ limits. However, due to the exponential decay of $ \bar k_Q( \omega, \omega' ) $ with respect to distance in delay-coordinate space $ \mathbb{R}^Q$, we can still interpret $ \sigma_Q $ from~\eqref{eqSigmaQ} as a local density-like function.

\subsubsection{\label{appVelocity}Phase space velocity}

Throughout this section, we will assume that the pointwise observation map $ F_y : X \mapsto \mathbb{ R } $ is of class $C^2$ for every $ y \in Y $, which is equivalent to assuming that the vector-valued observation map $\vec F$ lies in the space $C^2(X;C(Y))$. This assumption is natural for a wide class of observation maps encountered in applications. In particular, it implies that the ``energy'' of the signal, expressed in terms of the Koopman generator $ \tilde V $ from Appendix~\ref{appKoopman}  as $ \int_\Omega \lvert \tilde V( \vec  F ) \rvert^2 \, d\rho $ is finite. Under this condition, the phase space speed function $ \xi_Q : \Omega \mapsto \mathbb{ C } $, defined as 
\begin{equation}
  \label{eqXiQ}
  \xi^2_Q( x, y) = \frac{1}{Q} \sum_{q=0}^{Q-1}\zeta^2( \Phi^{-q\tau} (x), y),  \quad x \in X, \quad y \in Y,
\end{equation}
where
\begin{displaymath}
    \zeta( x, y )  = \lvert \tilde V F( x, y ) \rvert
   = \left \lvert \lim_{t\to 0} \frac{ F_y(  \Phi^t( x ) ) - F_y( x ) }{ t } \right \rvert,
\end{displaymath} 
is continuously differentiable with respect to $ x $. Note that $ \xi_Q(  x, y  ) $ may vanish, e.g., if $ y $ lies in the boundary of $ Y $ and $ F $ obeys time-independent boundary conditions. In the special case $ Q = 1 $,  $ \xi_Q $ will also vanish at local maxima/minima of the signal with respect to time. Phase space speed functions analogous to $ \xi_Q $ were previously employed in NLSA \cite{GiannakisMajda12a} and related kernel \cite{Giannakis15} and Koopman operator techniques \cite{Giannakis17}. For reasons that will be made clear below, we will adopt the approach introduced in \citep[][Section~6]{Giannakis17}, which utilizes $ \xi_Q $ in such a way so that if $ \xi_Q( \omega ) $ is zero, then $ s_Q( \omega ) $ vanishes too.

\subsubsection{\label{appScalingFn}Scaling function}

With the density and phase space speed functions from Appendices~\ref{appDensity} and~\ref{appVelocity}, respectively, we define the continuous scaling function
\begin{equation}
  \label{eqSQ}
  s_Q = \left( \sigma_Q \xi_Q \right)^\gamma, 
\end{equation}    
where $ \gamma$ is a positive parameter. This definition is motivated by \cite{Giannakis17}, where it was shown that an analogous scaling function employed in scalar-valued kernels for ergodic dynamical systems on compact Riemannian manifolds  can be interpreted, for an appropriate choice of $ \gamma $ and in a suitable limit of vanishing kernel bandwidth parameter $ \epsilon $, as a conformal change of Riemannian metric that depends on the vector field of the dynamics. More specifically, a Markov operator analogous to $ P_Q $ from Section~\ref{secMarkov} was shown to approximate the heat semigroup generated by a Laplace-Beltrami operator associated with this conformally transformed metric. In \cite{Giannakis17}, this change of geometry was associated with a rescaling of the vector field of the dynamics (i.e., a time change of the dynamical system \cite{KatokThouvenot06}) that was found to significantly improve the conditioning of kernel algorithms if the system has fixed points. In particular, for a dataset consisting of finitely many samples, the sampling of the state space manifold near a fixed point will become highly anisotropic, as most of the near neighbors of  datapoints close to the fixed point will lie along the sampled orbit of the dynamics (which is a one-dimensional set), and the directions transverse to the orbit will be comparatively undersampled. The latter is because the phase speed of the system becomes arbitrarily small near a fixed point, meaning that most geometrical  nearest neighbors of a data point in its vicinity will lie on a single orbit. Choosing the scaling function (analogous to $s_Q$) such that it vanishes at the fixed point is tantamount to increasing the bandwidth $ 1/ s_Q $ of the kernel  by arbitrarily large amounts, thus improving sampling in directions transverse to the orbit. 

While the arguments above are strictly valid only in the smooth manifold case (as they rely on $ \epsilon \to 0 $ limits),  $ s_Q $ in~\eqref{eqSQ} should behave similarly in regions of the product space $\Omega $ where the rate-of-change of the observed data (measured by $ \xi_Q $ in~\eqref{eqXiQ}) is small.  As stated in Section~\ref{secVSAKernel}, $ \xi_Q $ can vanish or be small not only at fixed points of the dynamics on $ X $, but also at points  $y \in Y $ where the observable $F_y $  is constant or nearly constant (e.g., near domain boundaries). 

What remains for a complete specification of $ s_Q $ is to set the exponent parameter $ \gamma $. According to \cite{Giannakis17}, if $ M $ is an $m $-dimensional manifold embedded in $ \mathbb{R}^Q$, the Riemannian metric associated with $ P_Q $ for the choice $ \gamma = 1 / m $ has compatible volume form with the invariant measure of the dynamics, in the sense that the corresponding density $ \frac{d\rho}{d \text{vol}} $ is a constant. Moreover, the induced metric is also invariant under a class of conformal changes of observation map $F_Q$.  In practice, $ M $ will not be a smooth manifold, but we can still assign to it an effective dimension by examining the dependence of the kernel integrals $ \kappa = \int_{\Omega\times \Omega } \bar k_Q \, d \rho \times d\rho $ (or the corresponding data-driven quantity $ \kappa_{NS} = \int_{\Omega \times \Omega} \bar k_Q \, d\rho_{NS} \times d\rho_{NS} $, where the measures $ \rho_{NS} $ are defined in Section~\ref{secDataDriven}) as a function of the bandwidth parameter $ \epsilon $. As shown in \cite{CoifmanEtAl08,BerryHarlim16}, $  d \log \kappa / d \log \epsilon $ can be interpreted as an effective dimension for  at the scale associated by the bandwidth parameter $ \epsilon $. This motivates an automatic bandwidth tuning procedure where $ \epsilon $ is chosen as the maximizer of that function, and the corresponding maximum value $ \hat m $ provides an estimate of $ M $'s dimension. 

Here, we nominally set $ \gamma = 1 / \hat m $ with $ \hat m $ determined via the method just described. The results presented in Section~\ref{secApplications}  are not too sensitive with respect to changes of $ \gamma $ around that value. In fact, for the systems studied here, the results remain qualitatively robust even if the velocity-dependent terms are not included in $ s_Q $ and $ s_{Q,N} $. That is, qualitatively similar results can also be obtained using the scaling function $ s_Q = \sigma_Q$, which is continuous even if $ F $ is not continuously differentiable.

\subsection{\label{appMarkov}Markov normalization}

Following the approach taken in NLSA algorithms and in \cite{BerryEtAl13,GiannakisEtAl15,Giannakis17,DasGiannakis17}, we will construct a Markov kernel $ p_Q : \Omega \times \Omega \mapsto \mathbb{R} $ from a strictly positive, symmetric kernel $ k_Q : \Omega \times \Omega \mapsto \mathbb{R} $, meeting the conditions in Section~\ref{secKernelDelays}, by applying the normalization procedure introduced in the diffusion maps algorithm \cite{CoifmanLafon06} and further developed in the context of general exponentially decaying kernels in \cite{BerrySauer16b}. For that, we first compute the functions
\begin{displaymath}
  r_Q = K_Q 1_\Omega, \quad l = K_Q( 1_\Omega / r_Q )
\end{displaymath}
where $ 1_\Omega $ is the function on $ \Omega$ equal to 1 at every point. By the properties of $ k_Q$ and compactness of the support of $ \rho$, both $ r_Q $ and $l_Q $ are continuous, positive functions on $ \Omega $, bounded away from zero on compact sets. We then define the kernel $ p_Q $ by
\begin{equation}
  \label{eqPKernel}
  p_Q( \omega, \omega' ) = \frac{ k_Q( \omega, \omega' ) }{ l_Q( \omega ) r_Q( \omega' ) },
\end{equation} 
and the Markov property follows by construction. In \cite{BerrySauer16b}, the division of $ k_Q( \omega, \omega' ) $ by $ l_Q( \omega ) $ and $ r_Q( \omega' ) $ to form $ p_Q( \omega, \omega' ) $ is referred to as left and right normalization, respectively. Because $ r_Q $ and $ l_Q $ are positive and bounded away from zero on compact sets, $ p_Q $ is continuous.

In general, a kernel of the class in~\eqref{eqPKernel}, is not symmetric, and as a result the corresponding integral operator $ P_Q : H_\Omega \mapsto H_\Omega $  is not self-adjoint. Nevertheless, by symmetry of $ k_Q $, $ P_Q $ is related to a self-adjoint compact operator $ \hat P_Q : H_\Omega \mapsto H_\Omega $ by a similarity transformation. In particular, let $ f $ be a bounded function in $ L^\infty( \Omega, \rho ) $, and $ T_f : H_\Omega \mapsto H_\Omega $ the corresponding multiplication operator by $ f $. That is, for $ g \in H_\Omega $, $T_f g $ is the function equal to $ f( \omega ) g( \omega ) $ for $ \rho $-a.e.\ $ \omega \in\Omega $. Defining $ \hat P_Q : H_\Omega \mapsto H_\Omega $ as the self-adjoint kernel integral operator associated with the symmetric kernel
\begin{displaymath}
  \hat p_Q( \omega, \omega' ) = \frac{ k_Q( \omega, \omega' ) }{ \hat l_Q( \omega ) \hat l_Q( \omega' ) }, \quad \hat l_Q = \sqrt{ l_Q r_Q }, 
\end{displaymath} 
one can verify that $ \hat P_Q $ can be obtained from $P_Q $ through the similarity transformation
\begin{equation}
  \label{eqPSimilarity}
  \hat P_Q = T_{\tilde l_Q} \circ P_Q \circ T_{\tilde l_Q}^{-1}, \quad \tilde l_Q = \sqrt{ l_Q / r_Q }. 
\end{equation}  
Due to~\eqref{eqPSimilarity},  $ P_Q $ and $ \hat P_Q $ have the same eigenvalues $ \lambda_j $, which are real by self-adjointness of $ \hat P_Q $, and thus admit the ordering  
$ 1 = \lambda_0 > \lambda_1 \geq \lambda_2 \geq \cdots $ since $ P_Q $ is ergodic and Markov. Moreover, by compactness of $ P_Q $ and $ \hat P_Q $, the eigenvalues have a single accumulation point at zero, and the eigenspaces corresponding to the nonzero eigenvalues are finite-dimensional.

Since $ \hat P_Q $ is self-adjoint, there exists an orthonormal basis $ \{ \hat \phi_j \}_{j=0}^\infty $ of $ H_\Omega $ consisting of real eigenfunctions $ \hat \phi_j $ of $ \hat P_Q $ corresponding to  $ \lambda_j $, which are continuous by the assumed continuity of kernels. Moreover, due to~\eqref{eqPSimilarity}, for every element $ \hat \phi_j $ of this basis, the continuous functions $ \phi_j = T_{\tilde l_Q} \hat \phi_j = \tilde l_Q \hat \phi_j $ and $ \phi'_j = T_{\tilde l_Q}^{-1} \hat \phi_j = \hat \phi_j / \tilde l_Q $ are eigenfunctions of $  P_Q $ and $  P_Q^* $, respectively, corresponding to the same eigenvalue $ \lambda_j $. The sets $ \{ \phi_j \}_{j=0}^\infty $ and $ \{ \phi'_j \}_{j=0}^\infty $ are then (non-orthogonal) bases of $ H_\Omega $ satisfying the bi-orthogonality relation $ \langle \phi'_i, \phi_j \rangle_{H_\Omega} = \delta_{ij} $. In particular, every $ f \in H_\Omega $ can be uniquely expanded as $ f = \sum_{j=0}^\infty c_j \phi_j $ with $ c_j = \langle \phi'_j, f \rangle_{H_\Omega} $, and we have $ P_Q f = \sum_{j=0}^\infty \lambda_j c_j \phi_j $. 

\subsection{\label{appKDelay}Behavior in the infinite-delay limit}

In this section we establish that the covariance and Gaussian kernels $ k_Q $ in~\eqref{eqKCovVSA}--\eqref{eqKQ}, as well as the Markov kernels in Section~\ref{secMarkov}, converge to well-defined, shift-invariant limits in the infinite-delay ($Q\to\infty$) limit, in accordance with the conditions for VSA kernels listed in Section~\ref{secKernelDelays}. Since all of these kernels are based on distances between datapoints in delay-coordinate space under the maps $\tilde F_Q$ from~\eqref{eqFQ}, we begin by considering the family of distance-like functions $ d_Q : \Omega \times \Omega \mapsto \mathbb{R}$, with $Q \in \mathbb{N}$ and
\begin{displaymath}
    d_Q( \omega, \omega' ) = \frac{1}{Q^{1/2}}\lVert \tilde F_Q(\omega) - \tilde F_Q(\omega') \rVert_{\mathbb{R}^Q}.
\end{displaymath}
This family of functions has the following important property: 
\begin{prop}
    \label{propDInf}
    Suppose that the sampling interval $ \tau $ is such that there exists no eigenfrequency $ \alpha_j $ of the generator $ V $ such that $ e^{i\alpha_j \tau} = 1 $. Then, the sequence $ d_1, d_2, \ldots $ converges in $ H_\Omega \times H_\Omega $ norm to a $ \tau$-independent limit  $ d_\infty \in H_\Omega \otimes H_\Omega $, satisfying $ \tilde U^t \otimes \tilde U^t d_{\infty} = d_\infty $ for all $ t \in \mathbb{R}$.   
\end{prop}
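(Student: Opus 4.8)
The plan is to recognize $d_Q^2$ as a Birkhoff average of a fixed kernel and reduce the statement to spectral properties of the lifted Koopman group. Writing $\omega=(x,y)$, $\omega'=(x',y')$ and recalling $\tilde\Phi^t=\Phi^t\otimes I_Y$, the definition of $\tilde F_Q$ in~\eqref{eqFQ} gives
\[
 d_Q^2(\omega,\omega')=\frac{1}{Q}\sum_{q=0}^{Q-1} h\bigl(\tilde\Phi^{-q\tau}(\omega),\tilde\Phi^{-q\tau}(\omega')\bigr),\quad h(\omega,\omega')=\lvert F_y(x)-F_{y'}(x')\rvert^2.
\]
Because $F$ is bounded on the compact support $M=A\times Y$ of $\rho$, the function $h$ is bounded and lies in $\mathcal H:=H_\Omega\otimes H_\Omega\cong L^2(\Omega\times\Omega,\rho\times\rho)$; explicitly $h=F^2\otimes 1_\Omega-2\,F\otimes F+1_\Omega\otimes F^2$. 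Setting $W=\tilde U^{-\tau}\otimes\tilde U^{-\tau}$, a unitary operator on $\mathcal H$ acting by composition with the $(\rho\times\rho)$-preserving map $\tilde\Phi^{-\tau}\times\tilde\Phi^{-\tau}$, the identity above reads $d_Q^2=\frac{1}{Q}\sum_{q=0}^{Q-1}W^q h$.

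First I would establish convergence of $d_Q$ itself. Von Neumann's mean ergodic theorem gives $d_Q^2\to D_\infty:=\Pi h$ in $\mathcal H$, where $\Pi$ is the orthogonal projection onto the invariant subspace $\ker(W-I)$, while Birkhoff's pointwise theorem gives $d_Q^2\to D_\infty$ for $(\rho\times\rho)$-a.e.\ $(\omega,\omega')$ (the two limits agree). Since $d_Q^2\ge 0$, we get $D_\infty\ge 0$ a.e.\ and $d_Q=\sqrt{d_Q^2}\to\sqrt{D_\infty}$ a.e.; as $0\le d_Q\le 2\max_M\lvert F\rvert$ uniformly in $Q$ and $\rho\times\rho$ is finite, dominated convergence promotes this to $\mathcal H$-convergence. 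This proves the first assertion with $d_\infty:=\sqrt{D_\infty}$.

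The heart of the argument is to show that $\Pi$ is in fact the projection onto $\ker\mathcal V$, where $\mathcal V=\tilde V\otimes I+I\otimes\tilde V$ generates the group $\tilde U^t\otimes\tilde U^t$; this simultaneously yields $\tau$-independence and the full invariance. Writing $\mathcal A=-i\mathcal V=\tilde A\otimes I+I\otimes\tilde A$ with $\tilde A=-i\tilde V$ self-adjoint, we have $W=e^{-i\tau\mathcal A}$, hence $\ker(W-I)=\ran E_{\mathcal A}\bigl((2\pi/\tau)\mathbb Z\bigr)$ and $\ker\mathcal V=\ran E_{\mathcal A}(\{0\})$, where $E_{\mathcal A}$ is the spectral measure of $\mathcal A$. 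It therefore suffices to prove that $\mathcal A$ has no atom at any nonzero $\beta\in(2\pi/\tau)\mathbb Z$. Splitting $H_\Omega=\mathcal D_\Omega\oplus\mathcal D_\Omega^\perp$ (Appendix~\ref{appKoopEig}) decomposes $\mathcal H$ into four $\tilde U^t\otimes\tilde U^t$-invariant blocks; on the three blocks involving at least one $\mathcal D_\Omega^\perp$ factor the spectral measure of $\mathcal A$ is a shift of, or a convolution of two, non-atomic measures, hence atomless. Atoms can thus occur only on $\mathcal D_\Omega\otimes\mathcal D_\Omega$, at the values $\alpha_j+\alpha_k$, carried by the eigenvectors $(z_j\otimes\psi)\otimes(z_k\otimes\psi')$. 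By the group property of Koopman eigenfrequencies (Appendix~\ref{appKoopEig}), $\alpha_j+\alpha_k$ is itself an eigenfrequency; were it a nonzero element of $(2\pi/\tau)\mathbb Z$ it would satisfy $e^{i(\alpha_j+\alpha_k)\tau}=1$, which the non-resonance hypothesis forbids unless $\alpha_j+\alpha_k=0$. Hence the only atom of $\mathcal A$ in $(2\pi/\tau)\mathbb Z$ sits at $0$, so $\Pi=E_{\mathcal A}(\{0\})$.

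Since $E_{\mathcal A}(\{0\})$ and $h$ do not involve $\tau$, the limit $D_\infty=\Pi h$, and therefore $d_\infty=\sqrt{D_\infty}$, is $\tau$-independent. Finally, $D_\infty\in\ker\mathcal V$ means $\tilde U^t\otimes\tilde U^t D_\infty=D_\infty$ for all $t$; as $\tilde U^t\otimes\tilde U^t$ acts by composition with a measure-preserving map, it commutes with the pointwise square root, giving $\tilde U^t\otimes\tilde U^t d_\infty=\sqrt{\tilde U^t\otimes\tilde U^t D_\infty}=d_\infty$, which is the claimed shift invariance. The main obstacle is precisely the spectral identification $\ker(W-I)=\ker\mathcal V$: controlling the continuous-spectrum blocks to rule out spurious atoms, and correctly invoking the group property together with the non-resonance condition, is where the real work and the hypothesis on $\tau$ enter; the ergodic-averaging and square-root steps are comparatively routine.
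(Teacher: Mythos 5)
Your proof is correct and takes essentially the same route as the paper's: recognize $d_Q^2$ as a Birkhoff average of $d_1^2$ under the product system, invoke the von Neumann mean ergodic theorem, and then use the non-resonance condition on $\tau$ together with the eigenvalue structure of $\tilde U^t \otimes \tilde U^t$ to place the limit in the nullspace of the product generator. The additional care you take --- the a.e.\ plus dominated-convergence step passing from $d_Q^2$ to $d_Q$, and the spectral-measure argument identifying $\ker(W-I)$ with $\ran E_{\mathcal A}\bigl((2\pi/\tau)\mathbb{Z}\bigr)$ and ruling out atoms from the continuous-spectrum blocks --- rigorously fills in details that the paper's one-sentence appeal to ``the same eigenvalues'' leaves implicit.
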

\begin{proof}
    Let $ \omega = ( x, y)$ and $ \omega' = ( x', y' ) $ with $ x, x' \in X$ and $ y, y' \in Y$ be arbitrary points in $ \Omega$. The $ H_\Omega \otimes H_\Omega $ convergence of $ d_Q $ to $ d_\infty $ follows from the Von Neumann mean ergodic theorem and the fact that  
\begin{displaymath}
    d^2_Q( \omega, \omega' ) = \frac{1}{Q} \sum_{q=0}^{Q-1} \lvert F_y(\Phi^{-q\tau}(x)) - F_{y'}(\Phi^{-q\tau}(x')) \rvert^2 = \frac{1}{Q} \sum_{q=0}^{Q-1} d_1^2( \Phi^{-q\tau}(\omega), \Phi^{-q\tau}(\omega')), 
\end{displaymath}
is a Birkhoff average under the product dynamical system $ \tilde \Phi^{q \tau} \otimes \tilde \Phi^{q \tau} $ on $ \Omega \times \Omega$ of the function $ d_1^2 : \Omega \times \Omega \mapsto \mathbb{R} $, which is bounded on the compact support of the corresponding invariant measure $ \rho \times \rho $. 

Next, let $  \{ \tilde U^t \otimes U^t \}_{t\in\mathbb{R}} $ be the strongly continuous unitary group induced by $ \tilde \Phi^t \times \tilde \Phi^t $. Denote the generator of this group by $ \hat V $. To establish $ \tau$-independence and invariance of $ d_\infty $ under $ \tilde U^t \otimes \tilde U^t$, it suffices to show that $ d_\infty$ lies in the nullspace of $ \hat V $.  Indeed, by invariance of infinite Birkhoff averages, we have $ \tilde U^\tau \otimes \tilde U^\tau d_\infty = d_\infty $, i.e., $ d_\infty $ is an eigenfunction of $ \tilde U^\tau \otimes \tilde U^\tau $ at eigenvalue 1, and by the condition on $ \tau $ stated in the proposition and the fact that $ U^t $, $ \tilde U^t $, and $ \tilde U^t \otimes \tilde U^t $ have the same eigenvalues (see Appendix~\ref{appKoopman}), this implies that $ d_\infty $ is an eigenfunction of $ \hat V $ at eigenvalue zero.
\end{proof}

Note that the condition on $ \tau $ in Proposition~\ref{propDInf} holds for Lebesgue almost every $ \tau \in \mathbb{R}$, as the set of Koopman eigenfrequencies $ \alpha_j $ is countable. 

An immediate consequence of Proposition~\ref{propDInf} is that given any continuous shape function $ h : \mathbb{R} \mapsto \mathbb{R} $, the kernel $ k_Q : \Omega \times \Omega \mapsto \mathbb{R} $ with $ k_Q( \omega, \omega' ) = h( d_Q( \omega, \omega' ) )$ satisfies the conditions listed in Section~\ref{secKernelDelays}. In particular, setting $h$ to a Gaussian shape function, $ h(s) = e^{-s^2/\epsilon}$, with $ \epsilon > 0$, shows that the Gaussian kernel in~\eqref{eqKGaussVSA} has the desired properties. That the covariance kernel in~\eqref{eqKCovVSA} also has these properties follows from an analogous result to Proposition~\ref{propDInf} applied directly to the kernel $ k_Q $, which in this case is equal to a Birkhoff average of $ k_1 $.    

Next, we turn to the family of kernels in~\eqref{eqKQ} utilizing scaled distances. These kernels have the general form 
\begin{displaymath}
    k_Q(\omega,\omega') = h( s_Q( \omega ) s_Q( \omega' ) d_Q( \omega, \omega' ) ),
\end{displaymath}
where $ h : \mathbb{R} \mapsto \mathbb{R } $ is continuous, so by Proposition~\ref{propDInf}, the required properties will follow if it can be shown that the sequence of scaling functions $ s_1, s_2, \ldots $ converges in $ H_\Omega $ norm to a bounded function $ s_\infty \in L^\infty(\Omega,\rho) $. That this is indeed the case for the choice of scaling functions described in Appendix~\ref{appBandwidth} follows from the facts that (i) the local density function $ \sigma_Q $ in~\eqref{eqSigmaQ} is itself derived from an unscaled Gaussian kernel $ \bar k_Q$, which was previously shown to meet the required conditions, and (ii) the phase space velocity function $ \xi_Q $ from~\eqref{eqXiQ} is equal to a Birkhoff average of a continuous function. 

Finally, the class of Markov kernels from Section~\ref{secMarkov} meets the necessary conditions because the diffusion maps normalization function $ r_Q $ is a continuous function  determined by action of $ K_Q $ on a constant function, thus converging as $Q \to \infty $ to a $ \tilde U^t$-invariant function by the previous results on $k_Q$, and similarly $ l_Q$ is determined by action of $ K_Q $ on $ 1/ r_Q $ (see Appendix~\ref{appMarkov}). 

\section{\label{appDataDriven}Data-driven approximation}

In this appendix, we present a proof of Theorem~\ref{thmDataDriven} for the most general class of integral operators on $H_\Omega$ studied in this paper, namely the Markov operators $P_Q$ associated with the kernels $ k_Q: \Omega \times \Omega \mapsto \mathbb{R} $ from~\eqref{eqKQ} utilizing the distance scaling functions $ s_Q $  in~Appendix~\ref{appBandwidth}, followed by Markov normalization as described in Section~\ref{secMarkov} and Appendix~\ref{appMarkov}. The convergence results for the operators not employing distance scaling and/or Markov normalization follow by straightforward modification of the arguments below. 

\subsection{Data-driven Markov kernels}

Because the distance scaling functions $ s_Q $ involve integrals with respect to $ \rho $ and time derivatives, in a data-driven setting we must work with kernels  $ k_{Q,NS} : \Omega \times \Omega \mapsto \mathbb{ R }_+ $ approximating $ k_Q $,  where
\begin{displaymath}
    k_{Q,NS}(\omega,\omega') = \exp \left( - \frac{s_{Q,NS}(\omega) s_{Q,NS}(\omega')}{\epsilon Q} \sum_{q=0}^{Q-1} \left\lvert F_y( \Phi^{q \tau}(x)) - F_{y'}(\Phi^{q \tau}(x')) \right\rvert^2 \right), 
\end{displaymath}
and  $ s_{Q,NS} \in C( \Omega ) $ are scaling functions approximating $ s_Q$.

Our construction of $ s_{Q,NS}$ follows closely that of $ s_Q$ in~\eqref{eqSQ}; that is, we set 
\begin{displaymath}
    s_{Q,NS} = ( \sigma_{Q,NS} \xi_{Q,N})^\gamma,
\end{displaymath}
where $\gamma$ is the same exponent parameter as in~\eqref{eqSQ}, and $ \sigma_{Q,NS}$ and $ \xi_{Q,N}$ are continuous functions approximating $ \sigma_Q$ and $ \xi_Q$, respectively. To construct $ \sigma_{Q,NS} $, we introduce the integral operator $ \bar K_{Q,NS} : H_{\Omega,NS}\mapsto H_{\Omega,NS} $ with 
\begin{displaymath}
    \bar K_{Q,NS} f(\omega) = \int_\Omega \bar k_Q( \omega, \omega' ) f( \omega' ) \, d\rho_{NS}( \omega' ), 
\end{displaymath}
and define
\begin{equation}
    \label{eqSigmaQN} \sigma_{Q,NS} = \bar K_{Q,NS} 1_\Omega = \frac{ 1 }{ N } \sum_{n=0}^{N-1} \sum_{s=0}^{S-1} \bar k_Q( \cdot, \omega_{ns} ) w_{s,S}. 
\end{equation}  
Moreover, following \cite{GiannakisMajda12a,Giannakis15,Giannakis17}, in the data-driven setting we approximate the function $ \zeta $ used in the definition of $ \xi_Q$ in~\eqref{eqXiQ} by a continuous function $ \zeta_\tau : \Omega \mapsto \mathbb{ R } $ that provides a finite-difference approximation of $ \zeta $ with respect to the sampling interval $ \tau $. As a concrete example, we consider a backward difference scheme,
\begin{displaymath}
    \zeta_{\tau}(  x, y  ) = \frac{ \left \lvert F_y( x ) - F_y( \Phi^{-\tau}( x )) \right \rvert  }{ \tau }, \quad x \in X, \quad y \in Y,
\end{displaymath}
which, under the assumed differentiability properties of $ F_y $ (see Appendix~\ref{appVelocity}), converges as $ \tau \to 0 $ to $ \zeta $, uniformly on compact sets (in particular, $ \mathcal{V} $). We will also consider that the sampling interval is specified as a function $ \tau( N ) $ such that $ \tau( N ) \to 0 $ and $ N \tau( N ) \to \infty $ as $ N \to \infty $. With this assumption, the limit $ N \to \infty $ corresponds to infinitely short sampling interval (required for convergence of finite different schemes) and infinitely long total sampling time (required for convergence of ergodic averages). We define $ \xi_{Q,N} $ as the function resulting by substituting $\zeta$ by $\zeta_{\tau(N)}$ in~\eqref{eqXiQ}. As we will establish in Appendix~\ref{appClaimB}, with these definitions, and under weak convergence of the measures $\rho_{NS}$ in~\eqref{eqPhysicalM}, $s_{Q,NS} $ converges to $ s_Q $ uniformly on the compact set $\mathcal{V}$.

Next, we construct the Markov kernels $ p_{Q,NS} : \Omega \times \Omega \mapsto \mathbb{R} $ of the operators $ P_{Q,NS} : H_{\Omega,NS} \mapsto H_{\Omega,NS} $ in the statement of the theorem by applying diffusion maps normalization to $ k_{Q,NS} $ as in Appendix~\ref{appMarkov}, viz.
\begin{equation}
  \label{eqPNKernel}
  p_{Q,NS}( \omega, \omega' ) = \frac{ k_{Q,NS}( \omega, \omega' ) }{ l_{Q,NS}( \omega ) r_{Q,NS}( \omega' ) }, \quad r_{Q,NS} = K_{Q,NS} 1_\Omega, \quad l_{Q,NS} = K_{Q,NS}( 1_\Omega / r_{NS} ),
\end{equation} 
where $K_{Q,NS} $ is the kernel integral operator on $H_{\Omega,NS}$ associated with $ k_{Q,NS}$. Note that  $ r_{Q,NS} $ and $ l_{Q,NS} $ are positive, continuous functions on $ \Omega $, bounded away from zero on compact sets. %Moreover, it follows from~\eqref{eqPhysicalM} that as $ N,S \to \infty $, $ p_{Q,NS}(\omega,\omega' ) $ converges to the value of the Markov kernel $ p( \omega, \omega' ) $ from~\eqref{eqPKernel} for all $ \omega, \omega' \in \Omega$.  

\subsection{\label{appConv}Proof of Theorem~\ref{thmDataDriven}}

We will need the important notion of \emph{compact convergence} of operators on Banach spaces \cite{VonLuxburgEtAl08,Chatelin11}.
\begin{defn}
  \label{defCC}A sequence of bounded operators $ T_n : E \mapsto E $ on a Banach space $ E $ is said to converge compactly to a bounded operator $ T : E \mapsto E $ if $ T_n $ converges to $ T $ pointwise (i.e., $ T_n f \to T f $ for all $ f \in E $), and for every bounded sequence of vectors $ f_n \in E $, the sequence $ g_n = ( T_n - T ) f_n $ has compact closure (equivalently, $ g_n $ has a convergent subsequence).
\end{defn}
Compact convergence is stronger than pointwise convergence, but weaker than convergence in operator norm. For our purposes, it is useful as it is sufficient to imply convergence of isolated eigenvalues of bounded operators \cite{Chatelin11}, and hence convergence of nonzero eigenvalues of compact operators and their corresponding eigenspaces. In particular, Theorem~\ref{thmDataDriven} is a corollary of the following theorem, proved in Appendix~\ref{appConv2}:

\begin{thm}
    \label{thmConv}Under the assumptions of Theorem~\ref{thmDataDriven}, the following hold:
  \begin{enumerate}[(a)]
      \item  $ \tilde P_{Q,NS} $ and $ \tilde P_Q $ are both compact operators on $ C( \mathcal{V} ) $. As a result, their nonzero eigenvalues have finite multiplicities, and accumulate only at zero.
      \item As $ N,S \to \infty $, $  \tilde P_{Q,NS} $ converges compactly to $ \tilde P_Q $. 
      \item $ \lambda_j $ is a nonzero eigenvalue of $\tilde P_Q$  if and only if it is a nonzero eigenvalue of $  P_Q $. Moreover, if $ \phi_{j} $ is an eigenfunction of $ P_Q $ corresponding to that eigenvalue, then $ \tilde \phi_j \in C(\mathcal{V}) $ with
          \begin{equation}
              \label{eqTildePhi}
              \tilde \phi_j( \omega ) = \int_\Omega p_Q(\omega,\omega') \, d\rho(\omega') 
          \end{equation}
          is an eigenfunction of $ \tilde P_Q $ corresponding to the same eigenvalue. Analogous results hold for every nonzero eigenvalue of $ \lambda_{j,NS} $, of $ \tilde P_{Q,NS} $, and corresponding eigenfunctions $\phi_{j,NS} $ and $ \tilde \phi_{j,NS} $ of $ P_{Q,NS} $ and $\tilde P_{Q,NS}$, respectively, where
          \begin{equation}
              \label{eqTildePhiN}
              \tilde \phi_{j,NS}(\omega) = \int_\Omega p_{Q,NS}(\omega,\omega') \, d\rho_{NS}(\omega').
          \end{equation}
  \end{enumerate}
\end{thm}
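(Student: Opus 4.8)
The plan is to prove Theorem~\ref{thmConv} by placing the problem in the framework of compact operator approximation on the Banach space $C(\mathcal{V})$, as in \cite{VonLuxburgEtAl08,Chatelin11}, exploiting the fact that compact convergence (Definition~\ref{defCC}) implies convergence of isolated eigenvalues together with their spectral projections, and hence of all nonzero eigenvalues of a compact operator and their eigenspaces. Part (a) I would settle directly via the Arz\'ela--Ascoli theorem. Since the Markov kernels $p_Q$ and $p_{Q,NS}$ are continuous on the compact set $\mathcal{V}\times\mathcal{V}$ (established in Appendix~\ref{appMarkov} and in the construction~\eqref{eqPNKernel}), they are bounded and uniformly continuous there; consequently the image under $\tilde P_Q$ (resp.\ $\tilde P_{Q,NS}$) of the closed unit ball of $C(\mathcal{V})$ is uniformly bounded and equicontinuous, hence precompact, which is exactly compactness of the operator. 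The stated spectral consequences---finite multiplicity of nonzero eigenvalues and accumulation only at zero---are then standard.

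For part (c), the correspondence between the $C(\mathcal{V})$ spectrum and the $H_\Omega$ spectrum rests on the smoothing property of integral operators with continuous kernels. If $\phi_j \in H_\Omega$ satisfies $P_Q \phi_j = \lambda_j \phi_j$ with $\lambda_j \neq 0$, then $\phi_j = \lambda_j^{-1} P_Q \phi_j$ admits a continuous representative $\tilde\phi_j$, obtained by applying the integral operator as in~\eqref{eqTildePhiK}, and one verifies directly that $\tilde P_Q \tilde\phi_j = \lambda_j \tilde\phi_j$. Conversely, any eigenfunction of $\tilde P_Q$ at a nonzero eigenvalue is continuous and descends to an $H_\Omega$-eigenfunction of $P_Q$ at the same eigenvalue. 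Running the identical argument with $\rho_{NS}$ in place of $\rho$ yields the corresponding statement relating $P_{Q,NS}$ and $\tilde P_{Q,NS}$, so this part is essentially routine once (a) is available.

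The heart of the argument is part (b), compact convergence $\tilde P_{Q,NS} \to \tilde P_Q$, which I would verify through its two defining conditions. For pointwise convergence, fix $f \in C(\mathcal{V})$ and telescope the difference,
\begin{align*}
\tilde P_{Q,NS} f - \tilde P_Q f &= \int_\Omega \left( p_{Q,NS} - p_Q \right)(\cdot,\omega')\, f(\omega')\, d\rho_{NS}(\omega') \\
&\quad + \left( \int_\Omega p_Q(\cdot,\omega') f(\omega')\, d\rho_{NS}(\omega') - \int_\Omega p_Q(\cdot,\omega') f(\omega')\, d\rho(\omega') \right).
\end{align*}
The first term is bounded in uniform norm by $\lVert p_{Q,NS} - p_Q \rVert_{C(\mathcal{V}\times\mathcal{V})}\,\lVert f \rVert_{C(\mathcal{V})}$, using that each $\rho_{NS}$ is a probability measure, and vanishes once uniform kernel convergence is established; the second, measure-discretization term vanishes by the assumed weak convergence $\rho_{NS} \to \rho$ in~\eqref{eqPhysicalM}, since its integrand is continuous. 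For the precompactness condition, given a bounded sequence $f_{NS}$, the functions $g_{NS} = (\tilde P_{Q,NS} - \tilde P_Q) f_{NS}$ are uniformly bounded (operator norms are uniformly bounded because the kernels are uniformly bounded and the measures are probability measures) and equicontinuous (the family $\{p_{Q,NS}\}$ inherits a uniform modulus of continuity in its first argument from uniform convergence to the uniformly continuous $p_Q$); Arz\'ela--Ascoli then supplies a uniformly convergent subsequence, giving compact closure.

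The main obstacle---and the step demanding the most work---is the uniform convergence $p_{Q,NS} \to p_Q$ on $\mathcal{V}$ on which both limbs of part (b) rely. Unlike a plain Nystr\"om discretization of a fixed kernel, here the kernel is itself data-driven: it carries the density-dependent scaling $\sigma_{Q,NS}$, the finite-difference velocity $\xi_{Q,N}$, and the two diffusion-maps normalization factors $l_{Q,NS}, r_{Q,NS}$, all built from empirical integrals against $\rho_{NS}$ and from the finite-difference operator $\zeta_{\tau(N)}$. I would therefore prove, as the content deferred to Appendix~\ref{appClaimB}, that $s_{Q,NS} \to s_Q$ uniformly on $\mathcal{V}$, using weak convergence of $\rho_{NS}$ to pass $\sigma_{Q,NS} \to \sigma_Q$ and the coupled limit $\tau(N) \to 0$, $N\tau(N)\to\infty$ to pass $\zeta_{\tau(N)} \to \zeta$ (hence $\xi_{Q,N}\to\xi_Q$) uniformly on $\mathcal{V}$. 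Propagating these uniform limits through the exponential defining $k_{Q,NS}$, and then through the normalization functions $r_{Q,NS}, l_{Q,NS}$, which are integral operators applied to continuous functions and are bounded away from zero on $\mathcal{V}$, delivers $p_{Q,NS} \to p_Q$ uniformly. Reconciling the simultaneous $N,S\to\infty$ limits with the finite-difference requirement on $\tau(N)$ is the genuinely delicate bookkeeping in the proof.
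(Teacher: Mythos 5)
Your overall architecture matches the paper's: Arz\'ela--Ascoli for (a), the smoothing property for (c), and for (b) a splitting of $\tilde P_{Q,NS}f - \tilde P_Q f$ into a kernel-approximation error plus a measure-discretization error, with the data-driven scaling and normalization functions shown to converge uniformly. (Your two-term split, which lumps the normalization error into $\lVert p_{Q,NS}-p_Q\rVert_{C(\mathcal{V}\times\mathcal{V})}$, is a harmless consolidation of the paper's three-term split through the intermediate operator $\hat P_{Q,N}$ with kernel $k_Q/(l_{Q,N}\otimes r_{Q,N})$.)

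There is, however, one genuine gap, and it recurs at three places in your argument: you repeatedly invoke the weak convergence~\eqref{eqPhysicalM} to conclude \emph{uniform} (sup over $\omega\in\mathcal{V}$) convergence of empirical integrals, when weak convergence only gives convergence at each fixed $\omega$. Concretely: (i) your measure-discretization term requires $\sup_{\omega\in\mathcal{V}} \bigl\lvert \int p_Q(\omega,\cdot) f \, d\rho_{NS} - \int p_Q(\omega,\cdot) f \, d\rho \bigr\rvert \to 0$, not merely pointwise convergence in $\omega$; (ii) the same issue arises for $\sigma_{Q,NS}\to\sigma_Q$, since $\sigma_{Q,NS}(\omega) = \int \bar k_Q(\omega,\cdot)\,d\rho_{NS}$ is an empirical average over a family of integrands indexed by $\omega$; and (iii) likewise for $r_{Q,NS}\to r_Q$ and $l_{Q,NS}\to l_Q$, where the error has both a kernel-replacement part (which your uniform convergence $k_{Q,NS}\to k_Q$ does control) and a $\rho_{NS}$-versus-$\rho$ part (which it does not). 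Pointwise convergence of continuous functions on a compact set does not imply uniform convergence, so "the integrand is continuous" is not enough. The missing ingredient is a uniform law of large numbers: one must show that the families $\{p_Q(\omega,\cdot)f(\cdot)\}_{\omega\in\mathcal{V}}$, $\{\bar k_Q(\omega,\cdot)\}_{\omega\in\mathcal{V}}$, and $\{k_Q(\omega,\cdot)\}_{\omega\in\mathcal{V}}$ are Glivenko--Cantelli classes, which is exactly what the paper does (its classes $\mathcal{F}_1$, $\mathcal{F}_2$, $\mathcal{F}_3$, via Propositions~11 and~13 of \cite{VonLuxburgEtAl08}). The fix is available with the tools you already use: by the Lemma~\ref{lemmaPQUniform}-type argument, each of these families is uniformly bounded and equicontinuous, hence norm-compact in $C(\mathcal{V})$, and weak convergence of measures is uniform over norm-compact subsets of $C(\mathcal{V})$; but this step must be stated, since it is precisely where compactness of $\mathcal{V}$ and continuity of the kernels do real work, and your proof as written would not close without it.
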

%We will prove the theorem by showing that the operators $ \tilde P_{Q,NS} $ on $C(\mathcal{V})$ associated with the kernels $p_{Q,NS}$ from~\eqref{eqPNKernel} are compact operators converge in a sufficiently strong sense to $ \tilde P_Q $. 
To verify that Theorem~\ref{thmConv} indeed implies Theorem~\ref{thmDataDriven} (with $K_Q$ replaced by $P_Q$ and $ K_{Q,NS}$ by $P_{Q,NS}$), note that since the eigenvalue $ \lambda_j $ in the statement of Theorem~\ref{thmDataDriven} is nonzero, it follows from Theorem~\ref{thmConv}(c) that it is an eigenvalue of $ \tilde P_Q $, and that $ \tilde \phi_j $ from~\eqref{eqTildePhi} is a corresponding eigenfunction. Moreover, since, by Theorem~\ref{thmConv}(b), $ \tilde P_{Q,NS} $ converges to $ \tilde P_Q $ compactly (and thus in spectrum for nonzero eigenvalues \cite{Chatelin11}), there exist $ N_0, S_0 \in \mathbb{ N } $ such that the $ j $-th eigenvalues $ \lambda_{j,NS} $ of $ \tilde P_{Q,NS} $ are all nonzero for $ N \geq N_0 $ and $ S \geq S_0$, and thus, by Theorem~\ref{thmConv}(c), they are eigenvalues of $ P_{Q,NS} $ converging to $\lambda_j$, as claimed in Theorem~\ref{thmDataDriven}. The existence of eigenfunctions $ \phi_{j,NS} $ of $ P_{Q,NS}$  corresponding to $ \lambda_{j,NS} $, such that $ \tilde \phi_{j,NS} $ from~\eqref{eqTildePhiN} converges uniformly to $ \phi_j $ is shown in \cite{DasGiannakis17}. This completes our proof of Theorem~\ref{thmDataDriven}.   

\subsection{\label{appConv2}Proof of Theorem~\ref{thmConv}}

Our proof of Theorem~\ref{thmConv} draws heavily on the spectral convergence results  on data-driven kernel integral operators established in \cite{VonLuxburgEtAl08,DasGiannakis17}, though it requires certain modifications appropriate for the class of kernels in~\eqref{eqKQ} utilizing scaled distances, which, to our knowledge, have not been previously discussed. In what follows, we provide explicit proofs of Claims (a) and (b) of the theorem; Claim~(c) is a direct consequence of the definition of $ \tilde \phi_{j,NS} $ in~\eqref{eqTildePhiN}. Throughout this section, all operators will act on the Banach space of continuous functions on $ \mathcal{V} $ equipped with the uniform norm, $ \lVert \cdot \rVert_{C(\mathcal{V})} $. Therefore, for notational simplicity, we will drop the tildes from our notation for $ \tilde P_Q $ and $ \tilde P_{Q,NS} $. We will also drop $S$ subscripts representing the number of sampled points in the spatial domain $Y$,with the understanding that $ N \to \infty $ limits correspond to $ S \to \infty$ followed by $ N \to \infty$ limits. 
       
\subsection{Proof of Claim (a)}

Let $ \bar d : \Omega \times \Omega \mapsto \mathbb{R} $ be any metric on $\Omega$. We begin by establishing the following result on the kernel $p_Q$:
\begin{lem}
    The map $ \omega \mapsto p_Q(\omega, \cdot) $ is a continuous map from $\mathcal{V}$ to $C(\mathcal{V})$; that is, for any $ \epsilon > 0 $, there exists $\delta > 0 $ such that for all $\omega,\omega' \in \mathcal{V}$ satisfying $\bar d(\omega, \omega') < \delta$, 
    \begin{displaymath}
    \lVert p_{Q}(\omega,\cdot) - p_Q(\omega',\cdot) \rVert_{C(\mathcal{V})} < \epsilon.
\end{displaymath}
    \label{lemmaPQUniform}
\end{lem}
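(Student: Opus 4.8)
The plan is to reduce the lemma to the uniform continuity of the three continuous functions $k_Q$, $l_Q$, and $r_Q$ on the compact set $\mathcal{V}$, together with the fact, established in Appendix~\ref{appMarkov}, that $l_Q$ and $r_Q$ are bounded away from zero there. First I would record the elementary consequences of compactness: since $k_Q$ is continuous on $\Omega \times \Omega$ (by the conditions of Section~\ref{secKernelDelays}), its restriction to the compact product $\mathcal{V} \times \mathcal{V}$ is uniformly continuous and bounded, say $|k_Q| \leq M_k$ there; and since $l_Q, r_Q \in C(\Omega)$ are positive and bounded away from zero on compact sets, there exist constants $0 < c \leq M_l$ with $c \leq l_Q \leq M_l$ on $\mathcal{V}$ and $c_r > 0$ with $r_Q \geq c_r$ on $\mathcal{V}$, while $l_Q$ is uniformly continuous on $\mathcal{V}$.

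The core of the argument is a quotient estimate. For $\omega, \tilde\omega, \omega' \in \mathcal{V}$, I would write
\[
  p_Q(\omega,\omega') - p_Q(\tilde\omega,\omega') = \frac{1}{r_Q(\omega')}\left( \frac{k_Q(\omega,\omega')}{l_Q(\omega)} - \frac{k_Q(\tilde\omega,\omega')}{l_Q(\tilde\omega)} \right),
\]
and then expand the bracket with the identity $\tfrac{a}{b}-\tfrac{c}{d} = \tfrac{a(d-b)+b(a-c)}{bd}$, taking $a = k_Q(\omega,\omega')$, $b = l_Q(\omega)$, $c = k_Q(\tilde\omega,\omega')$, $d = l_Q(\tilde\omega)$. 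Using the bounds above, the bracket is controlled by
\[
  \left| \frac{k_Q(\omega,\omega')}{l_Q(\omega)} - \frac{k_Q(\tilde\omega,\omega')}{l_Q(\tilde\omega)} \right| \leq \frac{ M_k\,|l_Q(\tilde\omega) - l_Q(\omega)| + M_l\,|k_Q(\omega,\omega') - k_Q(\tilde\omega,\omega')| }{ c^2 },
\]
and the prefactor satisfies $1/r_Q(\omega') \leq 1/c_r$. Crucially, both difference terms are controlled \emph{uniformly in} $\omega'$: the first does not involve $\omega'$ at all, and the second is bounded by the modulus of continuity of $k_Q$ on $\mathcal{V}\times\mathcal{V}$, which does not depend on the fixed second slot.

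To finish, given $\epsilon > 0$ I would invoke uniform continuity of $l_Q$ and of $k_Q$ to choose a single $\delta > 0$ so that $\bar d(\omega,\tilde\omega) < \delta$ forces both $|l_Q(\tilde\omega) - l_Q(\omega)|$ and $\sup_{\omega'\in\mathcal{V}}|k_Q(\omega,\omega') - k_Q(\tilde\omega,\omega')|$ to be small enough that the displayed bound is below $c_r \epsilon$; taking the supremum over $\omega' \in \mathcal{V}$ then yields $\lVert p_Q(\omega,\cdot) - p_Q(\tilde\omega,\cdot)\rVert_{C(\mathcal{V})} < \epsilon$, as claimed. The only point demanding care — what I regard as the mild ``main obstacle'' — is ensuring that the numerator estimate is uniform in the second argument $\omega'$, since a merely pointwise-in-$\omega'$ bound would not control the $C(\mathcal{V})$ norm; this uniformity is exactly what continuity of $k_Q$ on the \emph{compact} set $\mathcal{V}\times\mathcal{V}$ (hence uniform continuity there) supplies.
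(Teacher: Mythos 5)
Your proof is correct, but it takes a genuinely different route from the paper's. The paper argues by contradiction: it assumes the conclusion fails, extracts sequences $\omega_n,\omega'_n \in \mathcal{V}$ with $\bar d(\omega_n,\omega'_n) \to 0$ yet $\lvert p_Q(\omega_n,\omega''_n) - p_Q(\omega'_n,\omega''_n)\rvert > \epsilon$ for suitable $\omega''_n$, and invokes the (already established) continuity of $p_Q$ on the compact set $\mathcal{V}\times\mathcal{V}$ to derive a contradiction --- in effect deducing the $C(\mathcal{V})$-valued continuity abstractly from uniform continuity of $p_Q$ on a compact product, without ever opening up the formula for $p_Q$. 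You instead work directly from the normalization structure $p_Q(\omega,\omega') = k_Q(\omega,\omega')/\bigl(l_Q(\omega)\, r_Q(\omega')\bigr)$ of~\eqref{eqPKernel}, and your quotient estimate via $\tfrac{a}{b}-\tfrac{c}{d} = \tfrac{a(d-b)+b(a-c)}{bd}$ is sound: the constants $M_k$, $M_l$, $c$, $c_r$ exist by compactness and by the paper's Appendix~\ref{appMarkov} facts that $l_Q, r_Q$ are positive and bounded away from zero on compact sets, and you correctly flag the one real point of care, namely that $\sup_{\omega'\in\mathcal{V}}\lvert k_Q(\omega,\omega') - k_Q(\tilde\omega,\omega')\rvert$ is controlled uniformly in $\omega'$ by the modulus of continuity of $k_Q$ on $\mathcal{V}\times\mathcal{V}$, since the product-metric distance between $(\omega,\omega')$ and $(\tilde\omega,\omega')$ is just $\bar d(\omega,\tilde\omega)$. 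What each approach buys: yours is quantitative (it yields an explicit modulus of continuity for $\omega \mapsto p_Q(\omega,\cdot)$ in terms of those of $k_Q$ and $l_Q$), is self-contained in that it does not presuppose continuity of the quotient $p_Q$, and incidentally sidesteps a slightly loose step in the paper's argument, where the claim that ``$(\omega_n,\omega''_n)$ converges to $(\omega'_n,\omega''_n)$'' should strictly be repaired by passing to convergent subsequences in the compact set $\mathcal{V}\times\mathcal{V}$ before invoking continuity; the paper's argument, in exchange, is shorter and applies verbatim to any continuous kernel on $\mathcal{V}\times\mathcal{V}$, independent of its particular algebraic form.
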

\begin{proof}
    Suppose that the claim is not true. Then, there exists $ \epsilon > 0 $ and  sequences $\omega_n,\omega'_n \in \mathcal{V}$,  such that, as $n\to\infty$, $\bar d(\omega_n,\omega'_n) \to 0$ and $\lVert p_{Q}(\omega_n,\cdot) - p_Q(\omega'_n,\cdot) \rVert_{C(\mathcal{V})} > \epsilon$. As a result, there exists $\omega''_n \in \mathcal{V}$ such that $\lvert p_Q(\omega_n,\omega''_n)-p_Q(\omega'_n,\omega''_n) \rvert > \epsilon$. However, this contradicts the fact that $p_Q$ is continuous since $(\omega_n,\omega''_n) \in \mathcal{V} \times \mathcal{V}$ converges to $(\omega'_n,\omega''_n)$.
\end{proof}

We now return to the proof of Claim~(a). First, that $ P_{Q,N} $ is compact follows immediately from the fact that it has finite rank. Showing that $ P_Q $ is compact is equivalent to showing that for any bounded sequence $ f_n \in C( \mathcal{V} ) $, the sequence  $ g_n = P_Q f_n$ has a limit point in the uniform norm topology. Since $ \mathcal{V} $ is compact, it suffices to show that $g_n $ is equicontinuous and bounded; in that case, the existence of a limit point of $ g_n $ is a consequence of the Arzel\`a-Ascoli theorem. Indeed, for any $ \omega \in \mathcal{V} $, we have
\begin{align*}
  \lvert g_n( \omega ) \rvert &= \left \lvert \int_\Omega p_Q( \omega, \omega' ) f_n( \omega' ) \, d\rho( \omega' ) \right \rvert \\
  & \leq \int_\Omega \lvert p_Q( \omega, \omega' ) f_n( \omega' ) \rvert \, d\rho( \omega' ) \\
  & \leq \lVert p_Q \rVert_{C(\mathcal{V}\times\mathcal{V})} \lVert f_n \rVert_{C(\mathcal{V})} \\
  & \leq \lVert p_Q \rVert_{C(\mathcal{V}\times\mathcal{V})} B,
\end{align*}
where $ B = \sup_n \lVert f_n \rVert_{C(\mathcal{V})} $. This shows that $ g_n $ is uniformly bounded. Similarly, we have
\begin{displaymath}
    \lvert g_n( \omega ) - g_n( \omega' ) \rvert \leq \lVert p_Q( \omega, \cdot ) - p_Q( \omega', \cdot ) \rVert_{C(\mathcal{V})} \lVert f_n \rVert_{C(\mathcal{V})}, 
\end{displaymath}
and the equicontinuity of $\{ g_n \} $ follows from Lemma~\ref{lemmaPQUniform}. It therefore follows from the Arzel\`a-Ascoli theorem that $g_n$ has a limit point, and thus  that $ P_Q $ is compact, as claimed. 

\subsection{\label{appClaimB}Proof of Claim (b)}

According to Definition~\ref{defCC}, we must first show that for every $ f \in C( \mathcal{V} ) $, $ P_{Q,N} f $ converges to $ P_Q f $ in the uniform norm; that is, we must show that $ \lim_{N\to\infty} \eta_N = 0 $, where 
\begin{displaymath}
  \eta_N = \lVert P_{Q,N} f  - P_Q f \rVert_{C(\mathcal{V})}. 
\end{displaymath}
Defining the Markov kernel $ \hat p_{Q,N} : \Omega\times \Omega \mapsto \mathbb{ R }_+ $,
\begin{equation}
  \label{eqPNHatKernel}
  \hat p_{Q,N}(\omega,\omega') = \frac{ k_Q(\omega,\omega') }{ l_{Q,N}(\omega) r_{Q,N}(\omega') },
\end{equation} 
and the operators $ \tilde P_{Q,N} : C( \mathcal{V} ) \mapsto C( \mathcal{V} ) $ with
\begin{displaymath}
  \tilde P_{Q,N} f = \int_\Omega  p_Q( \cdot, \omega ) f( \omega) \, d\rho_N( \omega ), \quad
  \hat P_{Q,N} f = \int_\Omega \hat p_{Q,N}( \cdot, \omega ) f( \omega ) \, d\rho_N( \omega ),
\end{displaymath}
we have
\begin{equation}
  \label{eqEtaN} 
   \eta_N   \leq  \lVert P_{Q} f - \tilde P_{Q,N} f \rVert_{C(\mathcal{V})} \\
   + \lVert \tilde P_{Q,N} f - \hat P_{Q,N} f \rVert_{C(\mathcal{V})} \\
   + \lVert \hat P_{Q,N} f - P_{Q,N} f \rVert_{C(\mathcal{V})}.
\end{equation} 
That is, we can bound $ \eta_N $ by the sum of contributions due to (i) errors in approximating integrals with respect to the invariant measure $ \rho $  by the sampling measure $ \rho_N $ (the first term in the right-hand side); (ii) errors in approximating the left and right normalization functions $ l_Q $ and $ r_Q $ by their data-driven counterparts, $ l_{Q,N} $ and $ r_{Q,N} $, respectively (the second term in the right-hand side); and (iii) errors in approximating the kernel $ k_{Q} $ by the data-driven kernel $ k_{Q,N} $ (the third term in the right-hand side). 

We first consider the first term,
\begin{displaymath}
  \lVert P_{Q} f - \tilde P_{Q,N} f \rVert_{C(\mathcal{V})} = \max_{\omega \in \mathcal{V}} \lvert \tilde P_{Q,N} f( \omega ) - P_Q f( \omega ) \rvert.
\end{displaymath}
By the weak convergence of the measures $ \rho_N$ to $\rho$ (see~\eqref{eqPhysicalM}) in conjunction with the continuity of $ p_Q $, it follows that $ \tilde P_{Q,N} f( \omega ) $ converges to $ P_N f( \omega )$, pointwise with respect to $ \omega \in \mathcal{V} $; however, it is not necessarily the case that the convergence is uniform. For the latter, we need the stronger notion of a \emph{Glivenko-Cantelli class}.
\begin{defn}
  Let $ \mathbb{E} : C( \mathcal{V} ) \mapsto \mathbb{ C } $ and $ \mathbb{E}_N : C( \mathcal{V} ) \mapsto \mathbb{ C } $, be the expectation operators with respect to the measures $ \rho_N $ and $ \rho $, respectively, i.e., 
\begin{displaymath}
  \mathbb{E} f = \int_\Omega f \, d\rho, \quad \mathbb{E}_N f = \int_\Omega f\, d\rho_N, \quad f \in C( \mathcal{V} ).
\end{displaymath}
Then, a set of functions $ \mathcal{ F } \in C( \mathcal{V} ) $ is said to be a Glivenko-Cantelli class if 
\begin{displaymath}
  \lim_{N\to\infty} \sup_{f\in\mathcal{F}} \lvert \mathbb{E} f  - \mathbb{E}_N f \rvert = 0. 
\end{displaymath}
\end{defn}
Note, in particular, that if the set 
\begin{displaymath}
  \mathcal{ F }_1 = \{ p_Q( \omega, \cdot ) f( \cdot ) \mid \omega \in \mathcal{V} \} 
\end{displaymath}
can be shown to be a Glivenko-Cantelli class, then it will follow that $ \lVert \tilde P_{Q,N} f - P_Q f \rVert_{C(\mathcal{V})} $ vanishes as $ N \to \infty $. That this is indeed the case follows from Proposition~11 in \cite{VonLuxburgEtAl08}.

Next, we turn to the second and third terms in~\eqref{eqEtaN}. To bound these terms, we first establish convergence of the data-driven distance scaling functions $ s_{Q,N} $ to $ s_Q $.  
\begin{lem}
    \label{lemScalingConv}Restricted to $ \mathcal{V} $, the scaling functions $ s_{Q,N} $ from Appendix~\ref{appBandwidth} converge uniformly as $ N \to \infty $ to $ s_Q $.
\end{lem}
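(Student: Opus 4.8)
The plan is to factor the convergence $s_{Q,N}\to s_Q$ through its two ingredients. Writing $s_Q=(\sigma_Q\xi_Q)^\gamma$ and $s_{Q,N}=(\sigma_{Q,N}\xi_{Q,N})^\gamma$, I would first reduce the claim to the two uniform convergences $\sigma_{Q,N}\to\sigma_Q$ and $\xi_{Q,N}\to\xi_Q$ on $\mathcal{V}$. Indeed, since $\sigma_Q$ is continuous, strictly positive, and bounded away from zero on the compact set $\mathcal{V}$ (Appendix~\ref{appDensity}), and all the functions involved are uniformly bounded there, the products $\sigma_{Q,N}\xi_{Q,N}$ and $\sigma_Q\xi_Q$ take values in a fixed compact interval $[0,M]$ for $N$ large; on such an interval $t\mapsto t^\gamma$ is uniformly continuous (for $\gamma>0$, including at $t=0$, where $\xi_Q$ may vanish). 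Hence uniform convergence of each factor, together with the elementary bound $\lVert \sigma_{Q,N}\xi_{Q,N}-\sigma_Q\xi_Q\rVert_{C(\mathcal{V})}\le \lVert\sigma_{Q,N}-\sigma_Q\rVert_{C(\mathcal{V})}\lVert\xi_{Q,N}\rVert_{C(\mathcal{V})}+\lVert\sigma_Q\rVert_{C(\mathcal{V})}\lVert\xi_{Q,N}-\xi_Q\rVert_{C(\mathcal{V})}$, would give the result.

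For the density factor, I would observe that $\sigma_{Q,N}$ and $\sigma_Q$ are built from the \emph{same} continuous kernel $\bar k_Q$ and differ only through the measures, i.e.\ $\sigma_{Q,N}(\omega)-\sigma_Q(\omega)=\int_\Omega \bar k_Q(\omega,\cdot)\,(d\rho_N-d\rho)$ by~\eqref{eqSigmaQ} and~\eqref{eqSigmaQN}. This is exactly the situation treated in the first term of the proof of Claim~(b): the map $\omega\mapsto\bar k_Q(\omega,\cdot)$ is continuous from $\mathcal{V}$ into $C(\mathcal{V})$ (the argument of Lemma~\ref{lemmaPQUniform} applies verbatim with $\bar k_Q$ in place of $p_Q$), so its image is a compact, hence Glivenko--Cantelli, subset of $C(\mathcal{V})$. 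Combined with the assumed weak convergence $\rho_N\to\rho$ from~\eqref{eqPhysicalM} and Proposition~11 of~\cite{VonLuxburgEtAl08}, this yields $\lim_{N\to\infty}\sup_{\omega\in\mathcal{V}}\lvert\sigma_{Q,N}(\omega)-\sigma_Q(\omega)\rvert=0$.

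For the velocity factor, the new content is the replacement of the exact generator-based speed $\zeta$ by its finite-difference surrogate $\zeta_{\tau(N)}$. Under the standing assumption $\vec F\in C^2(X;C(Y))$ (Appendix~\ref{appVelocity}), the backward-difference quotient $\zeta_{\tau(N)}$ converges to $\zeta$ uniformly on $\mathcal{V}$ as $\tau(N)\to 0$; I would record this as the key step, obtained from a first-order Taylor expansion of $t\mapsto F_y(\Phi^t(x))$ with remainder controlled uniformly on the compact set by the $C^2$ bound. Since $Q$ is fixed, $\xi_{Q,N}^2$ is a finite average of the $Q$ terms $\zeta_{\tau(N)}^2$ evaluated along the lagged trajectory (cf.~\eqref{eqXiQ}), and $\xi_Q^2$ the analogous average of $\zeta^2$; by continuity of the flow maps the uniform convergence $\zeta_{\tau(N)}\to\zeta$ propagates to $\zeta_{\tau(N)}^2\to\zeta^2$ and then to $\xi_{Q,N}^2\to\xi_Q^2$ uniformly, after which taking square roots (uniformly continuous on bounded sets) gives $\xi_{Q,N}\to\xi_Q$ uniformly.

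The main obstacle I anticipate is the velocity factor rather than the density factor: the latter is a direct reuse of the Glivenko--Cantelli machinery already invoked for $p_Q$, whereas $\xi$ requires approximating a time derivative, so its analysis must genuinely exploit the $C^2$ regularity of $\vec F$ and the coordinated scaling $\tau(N)\to 0$, $N\tau(N)\to\infty$, which makes the finite-difference error vanish uniformly while still sampling an arbitrarily long orbit. A secondary, purely bookkeeping point is that all lagged evaluations $\Phi^{-q\tau}(x)$, together with the extra shift $\Phi^{-\tau(N)}(x)$ entering $\zeta_{\tau(N)}$, must be kept inside a fixed compact subset of $X$ on which the uniform bounds hold; this follows from the invariance of the support of $\mu$ under the flow and the freedom to enlarge $\mathcal{V}$ to accommodate the finitely many ($Q$ fixed) lagged points.
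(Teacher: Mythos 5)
Your proposal follows essentially the same route as the paper's proof: the same product-difference bound reducing the claim to uniform convergence of the two factors, the same Glivenko--Cantelli argument (via the class $\{\bar k_Q(\omega,\cdot) \mid \omega \in \mathcal{V}\}$ and Proposition~11 of \cite{VonLuxburgEtAl08}) for $\sigma_{Q,N}\to\sigma_Q$, and the same appeal to the $C^2$ regularity of $\vec F$ for the uniform convergence of the finite-difference speed $\xi_{Q,N}\to\xi_Q$. You merely supply more detail than the paper at two points it treats tersely (the Taylor-remainder estimate behind $\zeta_{\tau(N)}\to\zeta$, and the uniform continuity of $t\mapsto t^\gamma$, which also quietly repairs the paper's equality-versus-inequality slip in the first display of its proof), so the argument is correct and essentially identical.
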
 
\begin{proof}
  It follows from the definition of $ s_Q $ and $ s_{Q,N} $ in~\eqref{eqSQ} that for all $ \omega \in \mathcal{V} $, 
\begin{align*}
  \lvert s_{Q,N}( \omega ) - s_Q( \omega ) \rvert &= \lvert \sigma_{Q,N}( \omega ) \xi_{Q,N}( \omega ) - \sigma_Q( \omega ) \xi_Q( \omega ) \rvert^\gamma \\
  &\leq ( \lvert \sigma_{Q,N}(\omega) - \sigma_Q(\omega) \rvert \lvert \xi_{Q,N}( \omega ) \rvert + \lvert \sigma_Q( \omega ) \rvert \lvert \xi_{Q,N}(\omega) - \xi_Q( \omega ) \rvert )^\gamma.
\end{align*}
Thus, since $ \xi_{Q,N} $ converges uniformly to $ \xi_Q $ by continuous differentiability of the observation map $ F $ on the compact set $ \mathcal{V} $ (see Appendix~\ref{appVelocity}), $ s_{Q,N} $ will converge uniformly to $ s_Q $ if $ \sigma_{Q,N} $ converges uniformly to $ \sigma_Q $. Indeed, because 
\begin{displaymath}
 \lvert \sigma_{Q,N}( \omega ) - \sigma_Q( \omega ) \rvert = \lvert \mathbb{ E }\bar k_Q( \omega, \cdot ) - \mathbb{ E }_N \bar k_Q( \omega, \cdot ) \rvert, 
\end{displaymath}
this will be the case if the set
\begin{displaymath}
  \mathcal{ F }_2 = \{ \bar k_Q( \omega, \cdot ) \mid \omega \in \mathcal{V} \}
\end{displaymath}
is a Glivenko-Cantelli class. This follows from similar arguments as those used to establish that $ \mathcal{ F }_1 $ is Glivenko-Cantelli. 
\end{proof}

Lemma~\ref{lemScalingConv}, in conjunction with the continuity of the kernel shape function used throughout this work (see Section~\ref{secVSAKernel}) implies in the following:

\begin{cor}\label{corKConv} The data-driven kernel $ k_{Q,N} $ converges uniformly to $ k_Q $; that is, 
  \begin{displaymath}
    \lim_{N\to\infty} \lVert k_{Q,N} - k_{Q,N} \rVert_{C(\mathcal{V}\times\mathcal{V})} = 0.
  \end{displaymath} 
\end{cor}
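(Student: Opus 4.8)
The plan is to exploit that the only difference between the data-driven kernel $k_{Q,N}$ and $k_Q$ lies in the scaling functions: the delay-coordinate distance
\[
  d_Q^2(\omega,\omega') = \frac{1}{Q} \sum_{q=0}^{Q-1} \left\lvert F_y(\Phi^{q\tau}(x)) - F_{y'}(\Phi^{q\tau}(x')) \right\rvert^2
\]
is evaluated directly on the observed data and is therefore identical in both kernels, so that $k_Q(\omega,\omega') = h\bigl(s_Q(\omega)s_Q(\omega')d_Q^2(\omega,\omega')/\epsilon\bigr)$ and $k_{Q,N}(\omega,\omega') = h\bigl(s_{Q,N}(\omega)s_{Q,N}(\omega')d_Q^2(\omega,\omega')/\epsilon\bigr)$ with $h(u)=e^{-u}$, $u\geq 0$. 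First I would record two elementary facts on the compact set $\mathcal{V}$: since $h$ is globally $1$-Lipschitz on $[0,\infty)$ (because $\lvert h'\rvert \leq 1$ there), one has the pointwise bound $\lvert k_{Q,N}(\omega,\omega') - k_Q(\omega,\omega')\rvert \leq \lvert s_{Q,N}(\omega)s_{Q,N}(\omega') - s_Q(\omega)s_Q(\omega')\rvert\, d_Q^2(\omega,\omega')/\epsilon$; and since $\tilde F_Q$ is continuous and $\mathcal{V}$ compact, $d_Q^2$ is bounded on $\mathcal{V}\times\mathcal{V}$ by some constant $C$.

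Next I would transfer the uniform convergence of a single scaling factor, supplied by Lemma~\ref{lemScalingConv}, to the bilinear product via an add-and-subtract estimate,
\[
  s_{Q,N}(\omega)s_{Q,N}(\omega') - s_Q(\omega)s_Q(\omega') = s_{Q,N}(\omega)\bigl(s_{Q,N}(\omega') - s_Q(\omega')\bigr) + \bigl(s_{Q,N}(\omega) - s_Q(\omega)\bigr)s_Q(\omega').
\]
Because $s_Q$ is continuous hence bounded on $\mathcal{V}$, and $\lVert s_{Q,N} - s_Q \rVert_{C(\mathcal{V})} \to 0$ by Lemma~\ref{lemScalingConv}, the functions $s_{Q,N}$ are uniformly bounded on $\mathcal{V}$ (uniformly in $N$); denoting a common bound by $B$, the right-hand side is at most $2B\lVert s_{Q,N} - s_Q \rVert_{C(\mathcal{V})}$ in absolute value, uniformly in $(\omega,\omega')$. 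Combining this with the Lipschitz bound and $d_Q^2 \leq C$ gives $\lVert k_{Q,N} - k_Q \rVert_{C(\mathcal{V}\times\mathcal{V})} \leq (2BC/\epsilon)\lVert s_{Q,N} - s_Q\rVert_{C(\mathcal{V})}$, which tends to $0$ as $N\to\infty$ by Lemma~\ref{lemScalingConv}, proving the corollary.

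I expect no serious obstacle here, as the statement is essentially a continuity-of-composition argument; the only points requiring attention are (i) recognizing that $d_Q$ is data-exact and thus common to both kernels, so that no approximation of the distance itself is involved, and (ii) promoting the uniform convergence of the single factor $s_{Q,N}$ to uniform convergence of the product $s_{Q,N}(\omega)s_{Q,N}(\omega')$, which the triangle inequality together with the uniform boundedness of the scaling functions handles. Because the Gaussian shape function is globally Lipschitz on $[0,\infty)$, one need not even restrict attention to the bounded range of its argument, though confining to the compact $\mathcal{V}\times\mathcal{V}$ is what guarantees the boundedness of $d_Q^2$ used above.
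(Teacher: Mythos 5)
Your proof is correct and takes essentially the same route as the paper, which states the corollary as an immediate consequence of Lemma~\ref{lemScalingConv} together with the continuity of the kernel shape function; you simply make that implication quantitative, noting that the delay distance $d_Q$ is common to both kernels, replacing continuity by the global Lipschitz bound $\lvert h' \rvert \leq 1$ for $h(u) = e^{-u}$ on $[0,\infty)$, and handling the product $s_{Q,N}(\omega)s_{Q,N}(\omega')$ via the standard add-and-subtract estimate with a uniform-in-$N$ bound on the scaling functions. (Incidentally, the displayed statement contains a typo --- it should read $\lVert k_{Q,N} - k_Q \rVert_{C(\mathcal{V}\times\mathcal{V})}$ --- and your argument correctly proves this intended claim.)
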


We now proceed to bound the second term in~\eqref{eqEtaN}, $ \lVert \tilde P_{Q,N} f - \hat P_{Q,N} f \rVert_{C(\mathcal{V})} $. It follows from the definition of the kernels  $ p_{Q,N} $ and $ \hat p_{Q,N} $ via~\eqref{eqPNKernel}  and~\eqref{eqPNHatKernel}, respectively, that 
\begin{displaymath}
  \lVert \tilde P_{Q,N} f - \hat P_{Q,N} f \rVert_{C(\mathcal{V})}  \\
  \leq \lVert k_Q \rVert_{C(\mathcal{V}\times\mathcal{V})} \lVert f \rVert_{C(\mathcal{V})} 
  \left \lVert \frac{ 1 }{ l_{Q,N} \otimes r_{Q,N} } - \frac{ 1 }{ l_Q \otimes r_Q } \right \rVert_{C(\mathcal{V}\times\mathcal{V})}.
\end{displaymath}
By our assumptions on kernels stated in Section~\ref{secKernelDelays}, the functions $ l_Q $, $ r_Q $, $ l_{Q,N} $, and $ r_{Q,N} $ are bounded away from zero on $ \mathcal{V}$, uniformly with respect to $N$. Therefore, there exists a constant $ c > 0  $, independent of $ N $, such that   
\begin{align*}
  \lVert \tilde P_{Q,N} f - \hat P_{Q,N} f \rVert_{C(\mathcal{V})}  
   & \leq c \lVert k_Q \rVert_{C(\mathcal{V}\times\mathcal{V})} \lVert f \rVert_{C(\mathcal{V})} \lVert l_Q \otimes r_Q  -  l_{Q,N} \otimes r_{Q,N}  \rVert_{C(\mathcal{V}\times\mathcal{V})} \\
  & = c \lVert k_Q \rVert_{C(\mathcal{V}\times\mathcal{V})} \lVert f \rVert_{C(\mathcal{V})} 
   \lVert l_Q - l_{Q,N} \rVert_{C(\mathcal{V})} \lVert r_Q - r_{Q,N} \rVert_{C(\mathcal{V})}. 
\end{align*}
Observe now that 
\begin{align*}
  \lVert r_{Q} - r_{Q,N}  \rVert_{C(\mathcal{V})} &= \max_{\omega\in\mathcal{V}} \lvert r_{Q}( \omega ) - r_{Q,N}( \omega ) \rvert \\
  &= \max_{\omega\in\mathcal{V}} \lvert \mathbb{E} k_Q( \omega, \cdot ) - \mathbb{ E }_N k_{Q,N}( \omega, \cdot ) \rvert \\
  & \leq \max_{\omega\in\mathcal{V}} \lvert \mathbb{E} k_Q( \omega, \cdot ) - \mathbb{ E }_N k_{Q}( \omega, \cdot ) \rvert 
   + \max_{\omega\in\mathcal{V}} \lvert \mathbb{E}_N\left( k_{Q,N}( \omega, \cdot ) - k_Q( \omega, \cdot ) \right)\rvert\\
  & \leq \max_{\omega\in\mathcal{V}} \lvert \mathbb{E} k_Q( \omega, \cdot ) - \mathbb{ E }_N k_{Q}( \omega, \cdot ) \rvert 
   +\lVert k_{Q,N} - k_Q \rVert_{C(\mathcal{V}\times\mathcal{V})}.
\end{align*}
Since $ \lVert k_{Q,N} - k_Q \rVert_{C(\mathcal{V}\times\mathcal{V})} $ converges to zero by Corollary~\ref{corKConv}, it follows that  
\begin{equation}
  \label{eqRQConv}
  \lim_{N\to\infty}\lVert r_{Q} - r_{Q,N}  \rVert_{C(\mathcal{V})} = 0
\end{equation}   
if it can be shown that 
\begin{displaymath}
  \mathcal{ F }_3 = \{ k_Q( \omega, \cdot ) \mid \omega \in \mathcal{V} \}
\end{displaymath}
is a Glivenko-Cantelli class. The latter can be verified by means of similar arguments as those used to establish that $ \mathcal{ F }_1 $ is Glivenko-Cantelli. Equation~\eqref{eqRQConv}, in conjunction with the fact that $ \lVert l_Q - l_{Q,N} \rVert_{C(\mathcal{V})} $ is bounded, is sufficient to deduce that $ \lim_{N\to\infty}\lVert \tilde P_{Q,N} f - \hat P_{Q,N} f \rVert_{C(\mathcal{V})} = 0$. 

We now turn to the third term in~\eqref{eqEtaN}, $ \lVert \hat P_{Q,N} f - P_{Q,N} f \rVert_{C(\mathcal{V})} $. 
We have
\begin{displaymath}
  \lVert \hat P_{Q,N} f - P_{Q,N} f \rVert_{C(\mathcal{V}\times\mathcal{V})} \leq \lVert \hat p_{Q,N} - p_{Q,N} \rVert_{C(\mathcal{V}\times\mathcal{V})} \lVert f \rVert_{C(\mathcal{V})},
\end{displaymath}
and it follows from the definitions of $ \hat p_{Q,N} $ and $ p_{Q,N} $, in conjunction with the fact that the normalization functions $ l_{Q,N} $ and $ r_{Q,N} $ are both uniformly bounded away from zero, that there exists a constant $ c $ such that
\begin{displaymath}
   \lVert \hat P_{Q,N} f - P_{Q,N} f \rVert_{C(\mathcal{V}\times\mathcal{V})} \leq c \lVert k_Q - k_{Q,N} \rVert_{C(\mathcal{V}\times\mathcal{V})} \lVert f \rVert_{C(\mathcal{V})}.
\end{displaymath}
Thus, the convergence of $ \lVert \hat P_{Q,N} f - P_{Q,N} f \rVert_{\mathcal{V}} $ to zero follows from Corollary~\ref{corKConv}. 

In summary, we have shown that $ \lVert P_{N} f - \tilde  P_{Q,N} f \rVert_{C(\mathcal{V})} $, $ \lVert \tilde P_{Q,N} f - \hat P_{Q,N} f \rVert_{C(\mathcal{V})} $,  and $ \lVert \hat P_{Q,N} f - P_{Q,N} f \rVert_{C(\mathcal{V})} $ all converge to zero, which is sufficient to conclude that $ \lim_{N\to\infty} \eta_N = 0 $, and that that $ P_{Q,N} f $ converges to $ P_Q f $. 

According to Definition~\ref{defCC}, it remains to show that for any bounded sequence $ f_N \in C( \mathcal{V} ) $, the sequence $ g_N = ( P_{Q,N} - P_Q ) f_N $ has a limit point. This can be proved by  an Arzel\`a-Ascoli argument as in the proof of Claim (a) in conjunction with Glivenko-Cantelli arguments as in the proof of pointwise convergence above. We refer the reader to Proposition~13 in \cite{VonLuxburgEtAl08} for more details. This completes our proof of Claim (b).

%\bibliography{bibliography}

\end{document}